\numberwithin{equation}{section}
\newcommand{\Qq}{\mathbb{Q}}
\newcommand{\Rr}{\mathbb{R}}
\newcommand{\Zz}{\mathbb{Z}}
\newcommand{\pld}{\operatorname{pld}}
\newcommand{\Center}{\operatorname{center}}
\newcommand{\mld}{{\rm{mld}}}
\newcommand{\Mld}{{\rm{Mld}}}
\newcommand{\Supp}{\operatorname{Supp}}
\newcommand{\mult}{\operatorname{mult}}
\newcommand{\lf}{\lfloor}
\newcommand{\rf}{\rfloor}
\newcommand{\dg}{{\mathcal{DG}}}
\newcommand{\Ii}{{\Gamma}}
\newtheorem{thm}{Theorem}[section]
\newtheorem{conj}[thm]{Conjecture}
\newtheorem{lem}[thm]{Lemma}
\newtheorem{prop}[thm]{Proposition}
\theoremstyle{definition}
\theoremstyle{definition}
\newtheorem{defn}[thm]{Definition}
\newtheorem{rem}[thm]{Remark}
\newtheorem{ex}[thm]{Example}
\newcommand\luo[1]{\todo[color=green!40]{#1}} 
\title{On boundedness of divisors computing minimal log discrepancies for surfaces}
\author{Jingjun Han}
\address{Shanghai Center for Mathematical Sciences, Fudan University, Shanghai 200438, China}
\email{hanjingjun@fudan.edu.cn}
\address{Department of Mathematics, The University of Utah, Salt Lake City, UT 84112, USA}
\email{jhan@math.utah.edu}
\address{Mathematical Sciences Research Institute, Berkeley, CA 94720, USA}
\email{jhan@msri.org}
\author{Yujie Luo}
\address{Department of Mathematics, Johns Hopkins University, Baltimore, MD 21218, USA}
\email{yluo32@jhu.edu}
\dedicatory{Dedicated to Vyacheslav Shokurov with our deepest gratitude on the occasion of his seventieth birthday}
\begin{document}
\begin{abstract}
Let $\Gamma$ be a finite set, and $X\ni x$ a fixed klt germ. For any lc germ $(X\ni x,B:=\sum_{i} b_iB_i)$ such that $b_i\in \Gamma$, Nakamura's conjecture, which is equivalent to the ACC conjecture for minimal log discrepancies for fixed germs, predicts that there always exists a prime divisor $E$ over $X\ni x$, such that $a(E,X,B)={\rm{mld}}(X\ni x,B)$, and $a(E,X,0)$ is bounded from above. We extend Nakamura's conjecture to the setting that $X\ni x$ is not necessarily fixed and $\Gamma$ satisfies the DCC, and show it holds for surfaces. We also find some sufficient conditions for the boundedness of $a(E,X,0)$ for any such $E$. 
\end{abstract}

\subjclass[2020]{14E30, 14J17,14B05.}
\date{\today}

\maketitle

\pagestyle{myheadings}\markboth{\hfill  J.Han, and Y.Luo \hfill}{\hfill On boundedness of divisors computing MLDs for surfaces\hfill}

\tableofcontents


\section{Introduction}
We work over an algebraically closed field of arbitrary characteristic.

The minimal log discrepancy (MLD for short) introduced by Shokurov is an important invariant in birational geometry. Shokurov conjectured that the set of MLDs should satisfy the ascending chain condition (ACC) \cite[Problem 5]{Sho88}, and proved that the conjecture on termination of flips in the minimal model program (MMP) follows from two conjectures on MLDs: the ACC conjecture for MLDs and the lower-semicontinuity conjecture for MLDs \cite{Sho04}. 

The ACC conjecture for MLDs is still widely open in dimension 3 in general. We refer readers to \cite{HLS19} and references therein for a brief history and related progress. In order to study this conjecture for the case when $X\ni x$ is a fixed klt germ, Nakamura proposed Conjecture \ref{conjecture: DCCmustatanakamura}. It is proved by Musta\c{t}\u{a}-Nakamura \cite[Theorem 1.5]{mustata-nakamura18} and Kawakita \cite[Theorem 4.6]{Kawakita18} that Conjecture \ref{conjecture: DCCmustatanakamura} is equivalent to the ACC conjecture for MLDs in this case. 

\begin{conj}[{\cite[Conjecture 1.1]{mustata-nakamura18}}]\label{conjecture: DCCmustatanakamura}
   Let $\Ii\subseteq[0,1]$ be a finite set and $X\ni x$ a klt germ. Then there exists an integer $N$ depending only on $X\ni x$ and $\Ii$ satisfying the following.
     
   Let $(X\ni x,B)$ be an lc germ such that $B\in \Ii$. Then there exists a prime divisor $E$ over $X\ni x$ such that $a(E,X,B)=\mld(X\ni x,B)$ and $a(E,X,0)\le N.$ 
\end{conj}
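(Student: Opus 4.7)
The plan is to establish the conjecture in the surface case ($\dim X = 2$), which is the main content of the paper. The strategy combines the ACC for MLDs on surfaces with an explicit analysis of divisorial extractions on the minimal resolution of a klt surface singularity.

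First, I would pass to the minimal resolution $\pi\colon Y\to X$, whose exceptional divisors $E_1,\dots,E_r$ form a tree of smooth rational curves with negative-definite intersection matrix. The log discrepancies $a_j := a(E_j, X, B)$ are determined by the linear intersection equations
\[
(K_Y + \widetilde{B} - \pi^*(K_X+B)) \cdot E_j = 0 \quad (j=1,\dots,r),
\]
so each $a_j$ is an affine function of the coefficients $b_i \in \Gamma$. If a divisor $E$ computing $\mld(X\ni x,B)$ happens to equal one of the $E_j$, then $a(E_j,X,0)$ is a constant determined purely by the dual graph and self-intersections of $X\ni x$ (or, if $X\ni x$ is allowed to vary, by the bounded family of dual graphs that appear), and the conclusion follows.

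The remaining case is when $E$ is obtained by a sequence of further blowups starting from $Y$, centered at intersection points of $\widetilde{B}$ with $\sum_j E_j$ or at triple points. Each blowup at a point $p$ produces a new exceptional divisor $F$ whose log discrepancy satisfies a toric-style recursion in the log discrepancies of the curves through $p$ and the local multiplicities of $B$ at $p$; moreover $a(F, X, 0)$ strictly increases, and the conclusion would follow if the depth of the blowup sequence can be bounded by a constant depending only on $X\ni x$ and $\Gamma$.

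The main obstacle is precisely this uniform depth bound. I would attack it by combining two ingredients: (i) the recursion above forces $a(F, X, B)$ to strictly exceed the current candidate minimum after sufficiently many blowups, unless the coefficients of $B$ appearing at the relevant points remain small; and (ii) the DCC of $\Gamma$ prevents such coefficients from accumulating arbitrarily close to $0$. Together with the known ACC for surface MLDs (and Kawakita's equivalence, applied to a finite refinement of the DCC input), this pins down the depth by a constant depending only on $X\ni x$ and $\Gamma$, yielding the required bound $N$.
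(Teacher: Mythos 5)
You restrict to surfaces, which is also all this paper establishes for the conjecture: there it follows from Theorem~\ref{thm: nak conj dcc not fix germ}, whose proof for a fixed germ passes to the minimal resolution (where, for a fixed germ and finite $\Gamma$, the coefficients of the crepant pullback lie in a finite set) and then applies Theorem~\ref{nak smooth} at the smooth point that is the common center of all further exceptional divisors. Your architecture --- minimal resolution, then bound the depth of the subsequent blow-up sequence --- matches the paper's. The problem is that the step you yourself flag as ``the main obstacle'', the uniform depth bound, is precisely the content of the paper's Lemmas~\ref{Resolution}, \ref{smooth blow-up sequence} and \ref{Bound $(-1)$-curve adjacent to two vertices}, and your two ingredients do not supply it. Ingredient (i) mislocates the difficulty: the hard case is not when coefficients of $B$ are small (the finiteness/DCC hypothesis excludes that from the outset via Lemma~\ref{dcc set lower bound}), but when $\mld(X\ni x,B)$ is close to $0$; there the self-intersections along the exceptional chain are unbounded and the log discrepancies can increase by arbitrarily small increments per blow-up, so ``the recursion forces $a(F,X,B)$ to exceed the minimum after sufficiently many blow-ups'' yields no uniform bound. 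The paper resolves this by first arranging, via an MMP, that the extracted divisor is the unique $(-1)$-curve and the dual graph is a chain, and then proving the identity $n\leq n_3+\min\{W_1,W_2\}$ of Lemma~\ref{smooth blow-up sequence}: $n_3$ (the number of initial ``free'' blow-ups) is bounded because each such blow-up drops the log discrepancy by at least $\gamma$, and $\min\{W_1,W_2\}$ is bounded because on at least one side of the chain the log discrepancies increase by at least $\gamma/4$ per step. Nothing of this kind appears in your sketch, and without it the depth bound does not follow.

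Separately, your appeal to the ACC for surface MLDs together with the Musta\c{t}\u{a}--Nakamura/Kawakita equivalence cannot serve as one ``ingredient'' in bounding the depth: used in the direction ``ACC for MLDs on the fixed germ implies the conjecture'', that equivalence proves the statement outright in dimension two (given Alexeev's ACC) and makes the entire blow-up analysis redundant; used in the other direction it assumes what you are trying to prove. That equivalence route is a legitimate but genuinely different, and non-effective, proof; the paper deliberately avoids it in order to obtain explicit bounds and the generalizations to non-fixed germs. As written, your proposal neither commits to that citation nor supplies the combinatorial argument, so the key step remains open.
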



When $\dim X=2$, Conjecture \ref{conjecture: DCCmustatanakamura} is proved by Musta\c{t}\u{a}-Nakamura \cite[Theorem 1.3]{mustata-nakamura18} in characteristic zero. Alexeev proved it when $\Ii$ satisfies the descending chain condition (DCC) \cite[Lemma 3.7]{Ale93} as one of key steps in his proof of the ACC for MLDs for surfaces. See \cite[Theorem B.1]{CH20} for a proof of Alexeev's result in details. Later, Ishii gave another proof of Conjecture \ref{conjecture: DCCmustatanakamura} in dimension 2 \cite[Theorem 1.4]{Is21} that works in any characteristic. Kawakita proved Conjecture \ref{conjecture: DCCmustatanakamura} for the case when $\dim X=3$, $X$ is smooth, $\Ii\subseteq\Qq$, and $(X\ni x,B)$ is canonical \cite[Theorem 1.3]{Kawakita18} in characteristic zero.


\medskip

Naturally, one may ask whether Nakamura's conjecture holds or not when $X\ni x$ is not necessarily fixed and $\Ii$ satisfies the DCC. If the answer is yes, not only the ACC conjecture for MLDs will hold for fixed germs $X\ni x$ as we mentioned before, but also both the boundedness conjecture of MLDs and the ACC conjecture for MLDs for terminal threefolds $(X\ni x,B)$ will be immediate corollaries. The main goal of this paper is to give a positive answer to this question in dimension 2.


\begin{thm}\label{thm: nak conj dcc not fix germ}
Let $\Ii\subseteq[0,1]$ be a set which satisfies the DCC. Then there exists an integer $N$ depending only on $\Ii$ satisfying the following.

Let $(X\ni x, B)$ be an lc surface germ such that $B\in \Ii$. Then there exists a prime divisor $E$ over $X\ni x$ such that $a(E,X,B)=\mld(X\ni x,B)$ and $a(E,X,0)\leq N$.
\end{thm}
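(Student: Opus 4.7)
First I would reduce to the situation where $X\ni x$ is klt. If $X\ni x$ is strictly log canonical, then any log resolution of $X\ni x$ contains a prime exceptional divisor $E$ with $a(E,X,0)=0$; for any lc boundary $B$ on $X$ the inequalities $0\le a(E,X,B)\le a(E,X,0)=0$ force $a(E,X,B)=0=\mld(X\ni x,B)$, so the conclusion holds with $N=0$. Henceforth assume $X\ni x$ is klt, and let $f:Y\to X$ be the minimal resolution with exceptional divisors $E_1,\ldots,E_m$. Write $K_Y+\Gamma_Y=f^*(K_X+B)$, where $\Gamma_Y=\widetilde B+\sum_i c_i E_i$ and $c_i:=1-a(E_i,X,B)\in[0,1]$. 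The klt hypothesis on $(X,0)$ together with the minimality of the resolution gives $a(E_i,X,0)\in(0,1]$.

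\textbf{A dichotomy on divisors computing MLD.} Let $F$ be any divisor over $X\ni x$ with $a(F,X,B)=\mld(X\ni x,B)$; equivalently, $a(F,Y,\Gamma_Y)$ computes $\mld(Y,\Gamma_Y)$ at some closed point of $f^{-1}(x)$. If $F=E_i$ for some $i$, then $a(F,X,0)=a(E_i,X,0)\le 1$, so any $N\ge 1$ works. Otherwise $F$ is obtained from $Y$ by a finite sequence of blowups starting at a closed point $y\in f^{-1}(x)$, which is necessarily a smooth point of $Y$. Setting $D_Y:=\sum_i(1-a(E_i,X,0))E_i\ge 0$, the identity $a(F,X,0)=a(F,Y,D_Y)\le a(F,Y,0)$ reduces the problem to producing a divisor $F$ over $y$ with $a(F,Y,\Gamma_Y)=\mld(Y\ni y,\Gamma_Y)$ and $a(F,Y,0)$ uniformly bounded.

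\textbf{Smooth-germ reduction and the main obstacle.} Because $(Y\ni y)$ is analytically the smooth germ $(\mathbb{A}^2\ni 0)$, I would bound $a(F,Y,0)$ by invoking the fixed-germ, DCC-coefficient version of Nakamura's conjecture for smooth surfaces (Alexeev in characteristic zero, Ishii in arbitrary characteristic) applied to $(Y\ni y,\Gamma_Y)$. Doing so requires the coefficients of $\Gamma_Y$ at $y$ to lie in a DCC subset of $[0,1]$ depending only on $\Gamma$: the strict-transform coefficients of $\widetilde B$ lie in $\Gamma$, and the remaining coefficients are the $c_i=1-a(E_i,X,B)$ for the at most two $E_i$ passing through $y$. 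The heart of the proof, and the main technical obstacle, is therefore to show that
\[
\Big\{\,1-a(E,X,B)\,:\,(X\ni x,B)\text{ lc surface germ},\ B\in\Gamma,\ E\text{ exceptional on the minimal resolution of }X\ni x\,\Big\}
\]
is contained in a DCC subset of $[0,1]$ depending only on $\Gamma$; equivalently, the corresponding log discrepancies $a(E,X,B)$ satisfy an ACC property as $(X\ni x,B)$ varies. I would tackle this via the ACC for MLDs for surfaces (Alexeev), Shokurov's boundedness of complements in dimension two, and the explicit classification of klt surface singularities, which together let one control the $a(E,X,0)$ and the pullback multiplicities $\mult_E(f^*B)$ in terms of the dual graph of the minimal resolution and the DCC hypothesis on $\Gamma$. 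Once this DCC property is in hand, the smooth-germ case supplies the desired $F$ and yields the uniform bound $a(F,X,0)\le a(F,Y,0)\le N$.
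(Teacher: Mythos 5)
Your skeleton (dispose of the strictly lc case, pass to the minimal resolution $f:Y\to X$, and reduce to a smooth surface germ) is the same as the paper's, and you correctly locate the difficulty in the coefficients $c_i=1-a(E_i,X,B)$. However, the claim you single out as ``the heart of the proof'' --- that $\{1-a(E,X,B)\}$, taken over \emph{all} exceptional divisors $E$ of minimal resolutions of germs with $B\in\Ii$, lies in a DCC set --- is false, and no amount of work with complements or the classification of singularities will rescue it. Concretely, let $X_n\ni x$ be the $A_n$-singularity, whose minimal resolution is a chain of $n$ $(-2)$-curves $E_1,\dots,E_n$, and let $B=\tfrac12 B_1$ with $\widetilde{B_1}$ meeting $E_1$ transversally and no other $E_i$. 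Inverting the intersection matrix gives $1-a(E_j,X_n,B)=\tfrac{n+1-j}{2(n+1)}$, so $1-a(E_n,X_n,B)=\tfrac{1}{2(n+1)}$ is a strictly decreasing sequence in $n$: the set you want to be DCC is not. This is exactly the obstruction the paper flags in its sketch (``the coefficients $\widetilde b_i$ do not even satisfy $\{\sum_i n_i\widetilde b_i-1>0\}\subseteq[\gamma,+\infty)$ \dots\ Thus Theorem \ref{thm: nak conj dcc not fix germ} does not follow from Theorem \ref{nak smooth} directly'').

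The paper's escape has two ingredients you are missing. First, Lemma \ref{Resolution}(5) shows that after contracting redundant $(-1)$-curves, all divisors extracted beyond the minimal resolution are centered at a \emph{single} point $\widetilde x\in\widetilde X$, and that $\widetilde x$ lies on a divisor $\widetilde E$ computing $\pld(X\ni x,B)$; so only the coefficients of the at most two divisors through $\widetilde x$ matter, and the coefficient $1-\pld(X\ni x,B)$ of $\widetilde E$ does lie in a DCC set by the ACC for plds (Theorem \ref{finite pld}) --- a statement about the \emph{minimal} log discrepancy on the minimal resolution only, not about all of them. Second, for the possible second divisor $\widetilde E'$ through $\widetilde x$, the paper runs a dichotomy on $a(\widetilde E',X,B)-a(\widetilde E,X,B)$: when this gap is small, $1-a(\widetilde E',X,B)$ lies only in the thickened set $\Ii'_\epsilon$, which is \emph{not} DCC, and one must invoke the smooth-germ theorem under the weaker hypothesis $\{\sum_i n_ib_i-1>0\}\subseteq[\gamma,+\infty)$ (Theorem \ref{nak smooth} together with Lemma \ref{DCC set gap}); the fixed-germ DCC-coefficient results of Alexeev and Ishii that you propose to cite are not applicable there. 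When the gap is at least $\epsilon$, a separate bounded-chain plus matrix-inversion argument puts $1-a(\widetilde E',X,B)$ into a DCC set. Without the localization at $\widetilde x$ and the strengthened smooth-germ input, your plan does not close.
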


Theorem \ref{thm: nak conj dcc not fix germ} implies that for any surface germ $(X\ni x,B)$ such that $B\in \Ii$, there exists a prime divisor $E$ which could compute $\mld(X\ni x,B)$ and is ``weakly bounded'', in the sense that $a(E,X,0)$ is uniformly bounded from above, among all divisorial valuations. 

\medskip


A natural idea to prove Theorem \ref{thm: nak conj dcc not fix germ} is to take the minimal resolution $\widetilde{f}:\widetilde{X}\to X$, and apply Conjecture \ref{conjecture: DCCmustatanakamura} in dimension 2. However, the coefficients of $B_{\widetilde{X}}$ may not belong to a DCC set anymore, where
$K_{\widetilde{X}}+B_{\widetilde{X}}:=\widetilde{f}^*(K_X+B)$. In order to resolve this difficulty, we need to show Nakamura's conjecture for the smooth surface germ while the set of coefficients $\Ii$ does not necessarily satisfy the DCC. It would be interesting to ask if similar approaches could be applied to solve some questions in birational geometry in high dimensions, that is we solve these questions in a more general setting of coefficients on a terminalization of $X$.   

\begin{thm}\label{nak smooth}
Let $\gamma\in(0,1]$ be a real number. Then $N_0:=\lfloor 1+\frac{32}{\gamma^2}+\frac{1}{\gamma}\rfloor$ satisfies the following. 

Let $(X\ni x,B:=\sum_i b_iB_i)$ be an lc surface germ, where $X\ni x$ is smooth, and $B_i$ are distinct prime divisors. Suppose that $\{\sum_i n_ib_i-1> 0\mid n_i\in \mathbb{Z}_{\geq 0}\}\subseteq [\gamma,+\infty)$. Then there exists a prime divisor $E$ over $X$ such that $a(E,X,B)=\mld(X\ni x,B)$, and $a(E,X,0)\leq 2^{N_0}$.
\end{thm}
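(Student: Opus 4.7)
The plan is to extract a computing divisor $E$ via a minimal-length chain of point blowups on the smooth surface $X$, bound the length $k$ of that chain in terms of $\gamma$, and conclude via the standard estimate $a(E,X,0) \le 2^{k}$ that holds for any divisor produced by $k$ successive point blowups on a smooth surface.

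I begin by dispensing with the case $x \notin \Supp(B)$: then the blowup of $X$ at $x$ produces a prime divisor $E$ with $a(E,X,B) = a(E,X,0) = 2 = \mld(X \ni x, B)$, and the theorem holds with this $E$. Assuming $x \in \Supp(B)$, I fix a prime divisor $E$ over $X$ computing $\mld(X \ni x, B)$ with $a(E,X,0)$ minimal among such computing divisors. By Zariski's factorization of birational morphisms between smooth surfaces, there is a chain $\pi^{(k)}: X_k \to X_{k-1} \to \cdots \to X_0 = X$ of point blowups extracting $E = E_k$ as the last exceptional divisor; I set $B^{(i)}$ on $X_i$ by $K_{X_i} + B^{(i)} = (\pi^{(i)})^*(K_X + B)$. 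Induction on $\ell$ shows that the coefficient of each exceptional $E_\ell$ in $B^{(i)}$ has the form $\sum_j n_{\ell,j} b_j - m_\ell$ for non-negative integers $n_{\ell,j}$ and $m_\ell$, and lies in $[0,1]$ by the lc hypothesis.

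The crux is the bound $k \le N_0$. The coefficient of $E_{i+1}$ in $B^{(i+1)}$ equals $\mult_{p_i}(B^{(i)}) - 1$, which again has the form $\sum_j N_{i,j} b_j - M_i$; by the hypothesis on $\Gamma$, if positive this coefficient is at least $\gamma$. The minimality of $a(E,X,0)$ rules out blowups at points outside $\Supp(B^{(i)})$ past the initial stage and further constrains the position of each $p_i$. I would then track a combinatorial invariant attached to $B^{(i)}$ --- for example a weighted sum of the coefficients of the exceptionals introduced so far, or a running sum of the values $\mult_{p_i}(B^{(i)}) - 1$ along the chain --- and show that, starting from an initial value of order $1/\gamma$, the gap forces this invariant to strictly decrease by a definite amount on each relevant step. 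This would cap $k$ by $\lfloor 1 + 32/\gamma^2 + 1/\gamma \rfloor$, after which the estimate $a(E_k, X, 0) \le 2^{k}$ immediately yields $a(E,X,0) \le 2^{N_0}$.

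The principal obstacle is this quantitative step, in particular the precise constant $32/\gamma^2$. The delicate case is a blowup whose center is a node of two previously introduced exceptional curves: there $\mult_{p_i}(B^{(i)})$ splits as a sum of two contributions of the form $\sum n_j b_j - m$, each of which may individually lie close to (but strictly below) $1$. One must therefore apply the gap hypothesis to the combined expression rather than to each summand, while simultaneously using the minimality of $a(E,X,0)$ to exclude configurations in which an earlier $E_\ell$ would already compute $\mld(X \ni x, B)$. Identifying the correct monotone invariant, and handling the few ``generic'' blowups through smooth points of the exceptional locus that do not themselves produce the $\gamma$-decrease, is where the combinatorial core of the argument will lie.
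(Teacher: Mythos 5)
Your overall frame agrees with the paper's: realize a computing divisor as the last exceptional curve of a chain of $k$ point blowups, bound $k$ by $N_0$, and conclude with $a(E,X,0)\le 2^{k}$ (the paper's Lemma \ref{blow up and N}). But the entire quantitative content of the theorem is the bound $k\le N_0$, and at exactly that point your proposal switches to the conditional mood (``I would then track a combinatorial invariant \dots and show \dots''). No invariant is exhibited and no decrease is proved, so what you have is a plan for a proof rather than a proof.

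Worse, the route you sketch for the missing step has a concrete obstruction. You propose to apply the gap hypothesis to the coefficient of $E_{i+1}$ in the crepant pullback $B^{(i+1)}$, namely $\mult_{p_i}(B^{(i)})-1$, and assert that this quantity, if positive, is at least $\gamma$. Once $p_i$ lies on previously created exceptional curves, $B^{(i)}$ contains components whose coefficients are of the form $\sum_j n_jb_j-m_\ell$ with $m_\ell\ge 1$, so $\mult_{p_i}(B^{(i)})-1$ has the shape $\sum_j N_jb_j-M$ with $M\ge 2$. The hypothesis only controls the set $\{\sum_i n_ib_i-1>0\}$, and it does \emph{not} follow that $\{\sum_i n_ib_i-M>0\}\subseteq[\gamma,+\infty)$ for $M\ge 2$: for $\Gamma=\{\tfrac{2}{3},\tfrac{7}{10}\}$ one checks $\gamma=\tfrac13$, yet $3\cdot\tfrac{7}{10}-2=\tfrac1{10}$ is positive and much smaller. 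So the claimed per-step decrease by $\gamma$ fails for your proposed invariant, and you flag but do not resolve precisely the node case where this occurs. The paper avoids this by a different mechanism: Lemma \ref{Resolution} first produces (via an MMP) a model whose dual graph is a \emph{chain} with a unique $(-1)$-curve; Lemma \ref{smooth blow-up sequence} then gives the identity $n\le n_3+\min\{W_1(g),W_2(g)\}$, where $n_3$ counts only the ``free'' blowups with $x_i\in F_i\setminus F_{i-1}$; for those, $\mult_{x_i}B_{X_i}$ is computed from the \emph{strict transform} of $B$, so it is an honest $\sum n_ib_i$ and the gap hypothesis applies, giving $n_3\le 1+\tfrac1\gamma$; finally $\min\{W_1,W_2\}\le\tfrac{32}{\gamma^2}$ is obtained from the dichotomy $\mld\ge\tfrac\gamma2$ (Lemma \ref{Bound -1}) versus $\mld<\tfrac\gamma2$, in which case $a_{-1}-a_0\ge\tfrac\gamma4$ bounds $n_1$ and the weights $w_j$ for $j<0$. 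You would need substitutes for all of these ingredients (in particular for the chain structure and the weight identity) to make your approach work.
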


We remark that for any DCC set $\Ii$, there exists a positive real number $\gamma$, such that $\{\sum_i n_ib_i-1> 0\mid n_i\in \mathbb{Z}_{\geq 0},b_i\in\Ii\}\subseteq [\gamma,+\infty)$ (see Lemma \ref{dcc set lower bound}). However, the converse is not true. For example, the set $\Ii:=[\frac{1}{2}+\frac{\gamma}{2},1]$ satisfies our assumption, but it is not a DCC set. It is also worthwhile to remark that all previous works we mentioned before did not give any effective bound even when $\Ii$ is a finite set. 

Theorem \ref{nak smooth} indicates that there are some differences between Conjecture \ref{conjecture: DCCmustatanakamura} and the ACC conjecture for MLDs as the latter does not hold for such kind of sets.

\medskip

When we strengthen the assumption ``$\{\sum_i n_ib_i-1> 0\mid n_i\in \mathbb{Z}_{\geq 0}\}\subseteq [\gamma,+\infty)$'' to ``$\{\sum_i n_ib_i-1\geq 0\mid n_i\in \mathbb{Z}_{\geq 0}\}\subseteq [\gamma,+\infty)$'', we may give an explicit upper bound for $a(E,X,0)$ for all prime divisors $E$ over $X\ni x$, such that $\mld(X\ni x,B)=a(E,X,B)$. Theorem \ref{generalized nak smooth} is another main result in this paper.

\begin{thm}\label{generalized nak smooth}
Let $\gamma\in(0,1]$ be a real number. Then $N_0:=\lfloor 1+\frac{32}{\gamma^2}+\frac{1}{\gamma}\rfloor$ satisfies the following. 

Let $(X\ni x,B:=\sum_i b_iB_i)$ be an lc surface germ, such that $X\ni x$ is smooth, where $B_i$ are distinct prime divisors. Suppose that $\{\sum_i n_ib_i-1\geq 0\mid n_i\in \mathbb{Z}_{\geq 0}\}\subseteq [\gamma,+\infty)$. Then 
\begin{enumerate}
    \item $|S|\le N_0$, where $S:=\{E\mid E\text{ is a prime divisor over }  X\ni x, a(E,X,\\
    B)=\mld(X\ni x,B)\}$, and
    \item $a(E,X,0)\le 2^{N_0}$ for any $E\in S$.
\end{enumerate}
\end{thm}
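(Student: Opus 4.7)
\noindent\emph{Plan.} I reduce Theorem \ref{generalized nak smooth} to Theorem \ref{nak smooth} and leverage the additional rigidity afforded by the strengthened coefficient hypothesis to control every divisor in $S$, not only one.

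Since the hypothesis of Theorem \ref{generalized nak smooth} is strictly stronger than that of Theorem \ref{nak smooth} (the non-strict version additionally forbids any relation $\sum_i n_ib_i=1$ with $n_i\in\Zz_{\ge 0}$ not all zero), Theorem \ref{nak smooth} immediately supplies at least one divisor $E_0\in S$ with $a(E_0,X,0)\le 2^{N_0}$. Since $X\ni x$ is smooth, every prime divisor $E$ over $X\ni x$ has $a(E,X,0)\in\Zz_{\ge 1}$ and, for $E\in S$,
\begin{equation*}
a(E,X,0)=\mld(X\ni x,B)+\sum_i b_i\,\mult_E(B_i),
\end{equation*}
so for any $E,E'\in S$,
\begin{equation*}
a(E,X,0)-a(E',X,0)=\sum_i b_i\bigl(\mult_E(B_i)-\mult_{E'}(B_i)\bigr)\in\Zz.
\end{equation*}

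For part (2), I argue by contradiction: suppose some $E\in S$ satisfies $a(E,X,0)>2^{N_0}$. Extract both $E$ and $E_0$ on a common log resolution $g\colon Y\to X$ of $(X,B+E+E_0)$ and read off the identity above from the dual graph. The plan is then to use the integrality of $a(E,X,0)$, the bound $a(E_0,X,0)\le 2^{N_0}$, and the structure of how multiplicities propagate along the sequence of blowups producing $E$, to distill a non-negative integer combination of the form $\sum_i n_i b_i=1$ with $n_i\in\Zz_{\ge 0}$. Such a relation is forbidden by the strengthened hypothesis, contradicting the assumption and yielding $a(E,X,0)\le 2^{N_0}$ for every $E\in S$.

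For part (1), once every $E\in S$ has log discrepancy at most $2^{N_0}$, each such $E$ is extracted by a bounded-length sequence of point blowups, so $S$ sits inside the finite set of prime divisors on a fixed resolution. A refined counting argument, again exploiting that no non-trivial non-negative combination $\sum n_ib_i$ equals $1$ to prevent coincidences between distinct exceptional divisors simultaneously computing the mld, then gives $|S|\le N_0$. The main obstacle lies in the heart of part (2): producing an explicit non-negative integer relation $\sum n_ib_i=1$ from a hypothetical overly-large $E\in S$. This is precisely the point at which the gap between the strict and non-strict inequality assumptions is exploited, and the argument will hinge on a careful analysis of how the multiplicities $\mult_E(B_i)$ and $\mult_{E_0}(B_i)$ compare through the dual graph of a common resolution.
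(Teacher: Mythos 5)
Your proposal is a plan rather than a proof: the two steps on which everything rests are left as declarations of intent. Concretely, in part (2) you write that the plan is to ``distill a non-negative integer combination of the form $\sum_i n_ib_i=1$'' from a hypothetical $E\in S$ with $a(E,X,0)>2^{N_0}$, but you give no mechanism for doing so. The integrality observation you do prove, $a(E,X,0)-a(E',X,0)=\sum_i b_i(\mult_E(B_i)-\mult_{E'}(B_i))\in\Zz$, does not yield a relation with \emph{non-negative} coefficients, since the multiplicity differences can have either sign; and nothing in your setup explains why largeness of $a(E,X,0)$ (as opposed to, say, equality of two log discrepancies) should force the forbidden relation. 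The actual source of such relations in the paper is local: at each blow-up $x_i\in F_i\setminus F_{i-1}$ one has $a_i'-a_{i+1}'=\mult_{x_i}B_{X_i}-1=\sum_i n_ib_i-1$, and the hypothesis forces each such quantity to be either $\le -\gamma$... rather, to be nonzero and hence $\ge\gamma$ when non-negative, which is what bounds the length of the blow-up sequence. Extracting this requires the structural input you have not supplied: a model on which the dual graph is a chain with a unique $(-1)$-curve and on which the log discrepancies are monotone along the chain (Lemma \ref{Weight}(2), Lemma \ref{bound chain by edge $(-1)$-curve}(2)).

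Two further gaps. First, part (1): ``$S$ sits inside the finite set of prime divisors on a fixed resolution'' does not follow from $a(E,X,0)\le 2^{N_0}$ alone (a bound on the number of blow-ups does not fix \emph{which} points are blown up), and in any case the claimed bound is the much sharper $|S|\le N_0$, not merely finiteness; your ``refined counting argument'' is not given. Second, you do not treat the case $\mld(X\ni x,B)=0$, where $S$ could a priori be infinite; the paper needs a separate argument (Lemma \ref{plt blow up and lc place}, via a plt blow-up and adjunction) to show $|S|=1$ there. The paper's route, which resolves all of these issues simultaneously, is to apply Lemma \ref{resolution for generalized nak} to the whole set $S$: the hypothesis $\sum_i n_ib_i\neq 1$ forces a single smooth model $Y\to X$ extracting \emph{every} element of $S$, whose dual graph is a chain with exactly one $(-1)$-curve; Lemmas \ref{bound chain by edge $(-1)$-curve} and \ref{Bound $(-1)$-curve adjacent to two vertices} then bound the number of vertices of that chain by $N_0$, which gives $|S|\le N_0$ and, via Lemma \ref{blow up and N}, $a(E,X,0)\le 2^{N_0}$ for all $E\in S$ in one stroke. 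I would encourage you to aim for a single model carrying all of $S$ rather than comparing divisors of $S$ pairwise on ad hoc common resolutions.
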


Theorem \ref{generalized nak smooth} is a much deeper result than Theorem \ref{nak smooth}. To authors' best knowledge, such kind of boundedness results for $a(E,X,0)$ about all prime divisors which compute MLDs were never formulated in any literature before. 

It is clear that $(\mathbf{A}^2,B_1+B_2)$ satisfies the assumptions in Theorem \ref{nak smooth} but $|S|=+\infty$, where $B_1$ and $B_2$ are defined by $x=0$ and $y=0$ respectively. For klt germs $(X,B)$, we always have $|S|<+\infty$. In this case, Example \ref{ex: generealized nak not fix germ} shows that Theorem \ref{nak smooth} does not hold for the set $\Ii:=\{\frac{1}{2}\}\cup\{\frac{1}{2}+\frac{1}{k+1}\mid k\in\Zz_{\ge 0}\}$, and Example \ref{ex: generealized nak smooth coeff} shows Theorem \ref{generalized nak smooth} may fail when $b_i=\frac{1}{2}$ for any $i$, that is the equality for ``$\sum_i n_ib_i-1\geq 0$'' is necessary. These examples also indicate that the assumptions in Theorem \ref{nak smooth} and Theorem \ref{generalized nak smooth} might be optimal.

\medskip

We also give an effective bound for Theorem \ref{nak smooth} (respectively Theorem \ref{generalized nak smooth}) when $X\ni x$ is a fixed lc surface germ (respectively $X\ni x$ is a fixed klt surface germ), see Theorem \ref{thm: bdd mld div surface}  (respectively Theorem \ref{thm: bdd mld div number surface}).

\medskip

In Appendix \ref{appendix A} we will give a proof of ACC for MLDs for surfaces.

\begin{thm}[{\cite[Theorem 3.6]{Ale93},\cite{Sho91}}]\label{thm accformld in dim 2}
     Let $\Ii\subseteq[0,1]$ be a set which satisfies the DCC. Then the set of minimal log discrepancies $$\Mld(2,\Ii):=\{\mld(X\ni x,B)\mid (X\ni x,B) \text{ is lc}, \dim X=2, B\in \Ii\}$$ satisfies the ACC.   
\end{thm}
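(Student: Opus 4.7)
The plan is to deduce this ACC statement from Theorem~\ref{thm: nak conj dcc not fix germ} by a subsequence argument that exploits the DCC of $\Ii$. Suppose, for contradiction, that there exist lc surface germs $(X_i \ni x_i, B_i)$ with $B_i = \sum_j b_{i,j} B_{i,j}$, $b_{i,j}\in\Ii$, for which $m_i := \mld(X_i\ni x_i, B_i)$ is strictly increasing. Applying Theorem~\ref{thm: nak conj dcc not fix germ} produces an integer $N$ depending only on $\Ii$ and prime divisors $E_i$ over $X_i\ni x_i$ with $a(E_i, X_i, B_i) = m_i$ and $a(E_i, X_i, 0) \le N$. Writing $n_{i,j} := \mult_{E_i} B_{i,j} \in \Zz_{\ge 0}$ gives the identity
\[
m_i \;=\; a(E_i, X_i, 0) - \sum_j b_{i,j}\, n_{i,j}.
\]

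Set $\gamma_0 := \min(\Ii \cap (0,1])$, which is positive by DCC (the case $\Ii \subseteq \{0\}$ being trivial). The inequality $\sum_j b_{i,j} n_{i,j} \le N$ together with the fact that each positive term is at least $\gamma_0$ bounds both the number of contributing indices $j$ and each $n_{i,j}$ by $N/\gamma_0$. I would then pass to a subsequence in which the number of contributing indices is a fixed integer $k$, each multiplicity $n_{i,j} = n_j$ is a fixed positive integer, and each coefficient sequence $(b_{i,j})_i$ is non-decreasing in $i$; the last point uses the standard fact that every sequence in a DCC set admits a non-decreasing subsequence, applied in turn for $j = 1, \dots, k$. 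Setting $\mu_i := \sum_j b_{i,j}\, n_j$, which is then non-decreasing, the strict monotonicity of $m_i = a(E_i, X_i, 0) - \mu_i$ forces $a_i := a(E_i, X_i, 0) \in (0, N]$ to be strictly increasing as well.

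The remaining step, which I expect to be the main obstacle, is to contradict the existence of such a strictly increasing sequence of log discrepancies $a_i \le N$ of prime divisors over lc surface germs. My plan is to pass to the minimal resolution $\widetilde{X}_i \to X_i$, with exceptional divisors $F_{i,\ell}$ of log discrepancies $\alpha_{i,\ell} := a(F_{i,\ell}, X_i, 0) \in [0,1]$, and to use the recursion $a(G, Y, 0) = 2 - \sum_{F \ni p}(1 - a(F, X_i, 0))$ for a blowup $G$ at a point $p$ of a smooth surface $Y$ to express $a_i$ as a non-negative integral affine combination of finitely many $\alpha_{i,\ell}$'s, with total weight bounded by a function of $N$. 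A further subsequence can be chosen to make the combinatorial type of the extracting tower of blowups constant, so that $a_i$ becomes a fixed integral affine function of the $\alpha_{i,\ell}$'s. Finally, the classical fact arising from the explicit classification of lc surface singularities---that the set of exceptional log discrepancies of minimal resolutions of lc surface germs intersects $[\epsilon, 1]$ in a finite set for every $\epsilon > 0$---permits a final subsequence argument forcing $a_i$ to be non-increasing, contradicting strict increase.
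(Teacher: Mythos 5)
The first half of your argument is sound: applying Theorem \ref{thm: nak conj dcc not fix germ}, writing $m_i=a(E_i,X_i,0)-\sum_j b_{i,j}n_{i,j}$, and passing to a subsequence with fixed multiplicities and non-decreasing coefficients does legitimately reduce the problem to ruling out a strictly increasing sequence $a_i:=a(E_i,X_i,0)\in(0,N]$. The gap is that this reduced statement, in the generality you have left yourself, is \emph{false}, and the ``classical fact'' you invoke to finish is also false. Consider the cyclic quotient (klt) surface germ $X_n\ni x_n$ whose minimal resolution is a chain of $n$ smooth rational curves $E_1,\dots,E_n$ with weights $(3,2,\dots,2)$. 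Solving the adjunction system $a_kw_k=2-\sum_{j\ne k}(1-a_j)E_j\cdot E_k$ gives $a(E_i,X_n,0)=\frac{n+i}{2n+1}$; in particular $a(E_n,X_n,0)=\frac{2n}{2n+1}$ is a strictly increasing sequence contained in $[\tfrac12,1]\subseteq(0,N]$. So the set of exceptional log discrepancies of minimal resolutions of lc (even klt) surface germs meets $[\epsilon,1]$ in an infinite, non-ACC set for $\epsilon\le\tfrac12$, and strictly increasing bounded sequences of log discrepancies over varying lc surface germs do exist. Your reduction has discarded exactly the hypothesis that would exclude them, namely that $E_i$ \emph{computes} $\mld(X_i\ni x_i,B_i)$ (in the example above the mld is computed by $E_1$, with $a(E_1,X_n,0)=\frac{n+1}{2n+1}$ decreasing). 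A secondary gap: the number of blow-ups needed to extract $E_i$ from $\widetilde{X}_i$ is not bounded by a function of $N$ when the germ varies, since the recursion $a(G)=a(F_1)+a(F_2)$ at an intersection point increases log discrepancies arbitrarily slowly when the minimal-resolution values are small (and not at all when they vanish), so the ``fixed combinatorial type'' subsequence step is also unjustified.

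This is precisely the difficulty the paper's proof is organized around. Instead of seeking finiteness or ACC for all minimal-resolution log discrepancies, the paper proves ACC only for the \emph{minimum} of them (Theorem \ref{finite pld}), by a telescoping monotonicity argument along the chain; and in the case $\mld\ne\pld$ it first shows (Theorem \ref{reduce to pld}, via Lemmas \ref{Resolution}, \ref{bound chain by edge $(-1)$-curve}, \ref{Bound -1}) that the relevant dual graph lies in a finite set depending only on $\epsilon_0$ and $\Ii$, and then converts DCC of the data $w_j-2+f_*^{-1}B\cdot E_j$ into ACC of the log discrepancies using the negativity of the inverse intersection matrix (Lemma \ref{Fixed graph}). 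If you want to keep your route through Theorem \ref{thm: nak conj dcc not fix germ}, you would still need an input of essentially this strength to handle the varying-germ case; note that the discreteness of log discrepancies that would make your final step work (Kawakita's theorem) holds only for a \emph{fixed} germ.
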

Theorem \ref{thm accformld in dim 2} was proved by Alexeev \cite{Ale93} and Shokurov \cite{Sho91} independently. We refer readers to \cite[Lemma 4.5, Theorem B.1, Theorem B.4]{CH20} for a proof following Alexeev's arguments in details. Shokurov's preprint was never published. In this paper, we will give a simple proof following his idea. Yusuke Nakamura informed us that, together with Weichung Chen, and Yoshinori Gongyo, they have another proof of Theorem \ref{thm accformld in dim 2} which is also inspired by Shokurov's preprint \cite[Theorem 1.5]{CGN20}.



\medskip

\textit{Structure of the paper.} In Section 2, we give a sketch of the proofs of Theorem \ref{nak smooth}, Theorem \ref{thm: nak conj dcc not fix germ}, and Theorem \ref{thm accformld in dim 2}. In Section 3, we introduce some notation and tools which will be used in this paper, and prove certain results. In Section 4, we prove Theorem \ref{nak smooth}. In Section 5,
we prove Theorem \ref{thm: nak conj dcc not fix germ}. In Section 6, we prove Theorem \ref{generalized nak smooth}, Theorem \ref{thm: bdd mld div surface} and Theorem \ref{thm: bdd mld div number surface}. In Section 7, we introduce a conjecture on a fixed germ, and show it is equivalent to the ACC conjecture for MLDs. In Appendix A, we prove Theorem \ref{thm accformld in dim 2}.

\medskip

\noindent\textbf{Postscript.} Together with Jihao Liu, the authors proved the ACC for MLDs of terminal threefolds \cite[Theorem 1.1]{HLL22}. The proof is intertwined with Theorem \ref{thm: nak conj dcc not fix germ} for terminal threefolds. Bingyi Chen informed the authors that he improved the upper bound $2^{N_0}$ given in Theorem~\ref{nak smooth} in characteristic zero (see \cite[Theorem~1.3]{Che20}). The main part of this paper will appear in J. Inst. Math. Jussieu. \cite{HL22a}, and Appendix A will appear in Acta Math. Sin. (Engl. Ser.) \cite{HL22b}

\medskip

\noindent\textbf{Acknowledgments.} We would like to thank Vyacheslav Shokurov for sharing with us his preprint \cite{Sho91}. We would like to thank Guodu Chen, Jihao Liu, Wenfei Liu, Yuchen Liu, and Yusuke Nakmura for their interests and helpful comments. We would also like to thank Paolo Cascini, Chenyang Xu and Lei Zhang for answering our questions about characteristic $p$. Part of this paper was written during the reading seminar on birational geometry at Johns Hopkins University, Spring 2019. We would like to thank Yang He and Xiangze Zeng for joining in the seminar and useful discussions. The first named author was supported by a grant from the Simons Foundation (Grant Number 814268, MSRI) and Start-up Grant No. JIH1414011Y of Fudan University. We thank the referees for useful suggestions and comments. 

\section{Sketch of the proof}

In this section, we give a brief account of some of the ideas in the proofs of Theorem \ref{nak smooth}, Theorem \ref{thm: nak conj dcc not fix germ}, and Theorem \ref{thm accformld in dim 2}. 

\medskip

\noindent\textit{Sketch of the proof of Theorem \ref{nak smooth}.} We may assume that $B\neq 0$, and by Lemma \ref{Terminal}, we may assume that $\mld(X\ni x, B)\leq 1$. First, we extract a divisor $E_0$ on some smooth model $Y_0$ over $X$, such that $a(E_0,X,B)=\mld(X\ni x,B)$. Next, we may construct a sequence of MMPs, contract some other exceptional divisors on $Y_0$ which are ``redundant'', and reach a ``good'' smooth model $f:Y\to X$ such that $E_0$ is the only $f$-exceptional $(-1)$-curve, and the dual graph $\dg$ of $f$ is a chain, see Lemma \ref{Resolution}. 

It suffices to find an upper bound of the number of vertices of $\dg$. Let $\{E_i\}_{-n_1\leq i\leq n_2}$ be the vertices of $\dg$, $n:=n_1+n_2+1$, $w_i:=-(E_i\cdot E_i)$, and $a_i:=a(E_i,X,B)$ for $-n_1\le i\le n_2$. We may assume that $E_i$ is adjacent to $E_{i+1}$. There are two cases. 

\medskip

Case 1. If $n_1=0$, that is $E_0$ is adjacent to only one vertex $E_1$, then $$a_1-a_0=f_{*}^{-1}B\cdot E_0-1\in \{\sum_i n_ib_i-1>0\mid n_i\in \mathbb{Z}_{\geq 0}\}\subseteq [\gamma,+\infty).$$
Thus $a_{i+1}-a_i\geq a_1-a_0\geq \gamma$ for any $0\leq i\leq n_2-1$, and $1\geq a_{n_2}\ge n_2\gamma$. So $n_2\leq \frac{1}{\gamma}$.
$$
\begin{tikzpicture}
         \draw (3.1,0) circle (0.1);
         \node [above] at (3.1,0.1) {\footnotesize$E_0$};
         \draw (3.2,0)--(4,0);
         \draw (4.1,0) circle (0.1);
         \node [above] at (4.1,0.1) {\footnotesize$E_1$};
         \draw (4.2,0)--(5,0);
         \draw (5.1,0) circle (0.1);
         \node [above] at (5.1,0.1) {\footnotesize$E_2$};
         \draw [dashed](5.2,0)--(6,0);
         \draw (6.1,0) circle (0.1);
         \node [above] at (6.1,0.1) {\footnotesize$E_{n_2}$};
\end{tikzpicture}          
$$

Case 2. $E_0$ is adjacent to two vertices, that is $E_{-1}$ and $E_1$. If $\mld(X\ni x,B)$ has a positive lower bound, say $\mld(X\ni x,B)\ge \frac{\gamma}{2}$, then $w_i\le \frac{4}{\gamma}$, and we may show that $\dg$ belongs to a finite set depending only on $\gamma$, see Lemma \ref{Bound -1}.

The hard part is when $\mld(X\ni x,B)<\frac{\gamma}{2}$ as we could not bound $w_i$. Recall that $Y:=X_n\to X:=X_0$ is the composition of a sequence of blow-ups $f_i: X_i\to X_{i-1}$, such that $f_i$ is the blow-up of $X_{i-1}$ at a closed point $x_{i-1}\in X_{i-1}$ with the exceptional divisor $F_i$, and $x_i\in F_i$ for any $1\leq i\leq n-1$, where $F_0=\emptyset,x_0:=x$. A key observation (see Lemma \ref{smooth blow-up sequence}) is that we may show $$n\leq n_3+\min\{W_1(g),W_2(g)\},$$
where $n_3$ is the largest integer such that $x_i\in F_{i}\setminus F_{i-1}$ for any $1\leq i\leq n_3-1$, $W_1(g):=\sum_{j<0}w_j$ and $W_2(g):=\sum_{j>0}w_j$. 

\begin{figure}[ht]
\begin{tikzpicture}
         \draw (1.5,0) circle (0.1);
         \node [above] at (1.5,0.1)
         {\footnotesize${E_{-n_1}}$};
         \draw [dashed] (1.6,0)--(2.4,0);

         \draw (2.5,0) circle (0.1);
         \node [above] at (2.5,0.1) {\footnotesize$E_{-1}$};
         \draw (2.6,0)--(3.4,0);
         \draw (3.5,0) circle (0.1);
         \node [above] at (3.5,0.1) {\footnotesize$E_0$};
         \draw (3.6,0)--(4.4,0);
         \draw (4.5,0) circle (0.1);
         \node [above] at (4.5,0.1) {\footnotesize$E_1$};
         \draw [dashed](4.6,0)--(5.4,0);

         \draw (5.5,0) circle (0.1);
         
         \draw (5.6,0)--(6.4,0);
         \draw (6.5,0) circle (0.1);
         \draw [dashed] (6.6,0)--(7.4,0);
         \node [below] at (6.5,-0.1) {\footnotesize$F_{n_3-1}$};
         \draw (7.5,0) circle (0.1);
         \node [above] at (7.5,0.1)
         {\footnotesize${E_{n_2}}$};
         \node [below] at (7.5,-0.1) {\footnotesize$F_1$};
\end{tikzpicture}   
\end{figure}

Since $0<(a_{-1}-a_0)+(a_1-a_0)=f_*^{-1}B\cdot E_0-a_0$, $f_*^{-1}B\cdot E_0-a_0\geq \gamma-\frac{\gamma}{2}=\frac{\gamma}{2}$ by our assumption (see Lemma \ref{lower bound for gamma}). Thus we may assume that $a_{-1}-a_0=\max\{a_{-1}-a_0,a_1-a_0\}\geq \frac{\gamma}{4}$. Again, we may bound $n_1$ as well as $w_i$ for any $i<0$. In particular, we may bound $\min\{W_1(g),W_2(g)\}$.

Thus it is enough to bound $n_3$. Let $B_{X_i}$ be the strict transform of $B$ on $X_i$ for $0\leq i\leq n_3$, and let $a_i':=a(F_i,X,B)$ for $1\leq i\leq n_3$, and $a_0':=1$. Since $x_i\in F_i\setminus F_{i-1}$, we may show $a_{i}'-a_{i+1}'=\mathrm{mult}_{x_i}B_{X_i}-1\geq \min\{a_1-a_0,a_{-1}-a_0\}> 0$ for any $0\leq i\leq n_3-2$. Hence $a_{i}'-a_{i+1}'\geq \gamma$ as $\{\sum_i n_ib_i-1> 0\mid n_i\in \mathbb{Z}_{\geq 0}\}\subseteq [\gamma,+\infty)$. Therefore,
$$0\leq a_{n_3-1}'=a_0'+\sum_{i=0}^{n_3-2}(a_{i+1}'-a_i')\leq 1-(n_3-1)\gamma,$$
and $n_3\leq 1+\frac{1}{\gamma}$.

\medskip

\noindent\textit{Sketch of the proof of Theorem \ref{thm: nak conj dcc not fix germ}.} 
We may assume that $B\neq 0$, and $X\ni x$ is not a smooth germ. Let $\widetilde{f}:\widetilde{X}\to \ X \ni x$ be the minimal resolution, such that $K_{\widetilde{X}}+B_{\widetilde{X}}=\widetilde{f}^*(K_X+B)$ for some $B_{\widetilde{X}}=\sum_i \tilde{b}_i\tilde{B}_i\geq 0$. 

If $X\ni x$ is a fixed germ, then $\widetilde{b}_i$ belong to a DCC set (respectively a finite set if $\Ii$ is finite), and we are done by Theorem \ref{nak smooth}. However, the coefficients $\widetilde{b}_i$ do not even satisfy $\{\sum_i n_i\widetilde{b}_i-1> 0\mid n_i\in \mathbb{Z}_{\geq 0}\}\subseteq [\gamma,+\infty)$ for any positive real number $\gamma$ in general. For example, when $X\ni x$ are Du Val singularities. Thus Theorem \ref{thm: nak conj dcc not fix germ} does not follow from Theorem \ref{nak smooth} directly. 

We may assume that $\mld(X\ni x,B)\neq \pld(X\ni x,B)$, where $\pld(X\ni x,B):=\min_{i} \{1-\tilde{b}_i\}$ is the partial log discrepancy of $(X\ni x,B)$. Again, we may construct a good resolution $f:Y\to X$, such that $E_0$ is the only $f$-exceptional $(-1)$-curve and $a(E_0,X,B)=\mld(X\ni x,B)$, and the dual graph $\dg$ of $f$ is a chain, see Lemma \ref{Resolution}. We observed that the center of all exceptional divisors of $g:Y\to \widetilde{X}$ is a unique closed point $\widetilde{x}$, which must lie on a $\widetilde{f}$-exceptional divisor $\widetilde{E}$, such that $a(\widetilde{E},X,B)=\pld(X\ni x, B)$, see Lemma \ref{Resolution}(5).

By the ACC for pld's for surfaces (c.f. Theorem \ref{finite pld}), $\widetilde{b}:=\mult_{\widetilde{E}}B_{\widetilde{X}}=1-\pld(X\ni x,B)$ belong to a DCC set. Thus if $\widetilde{x}$ does not belong to any other $\tilde{f}$-exceptional divisor, then we may replace $(X\ni x,B)$ by $(\widetilde{X}\ni \widetilde{x},\widetilde{f}_{*}^{-1}B+\widetilde{b}\widetilde{E})$, and we are done by Theorem \ref{nak smooth}. We remark that even for this simple case, we need Theorem \ref{nak smooth} for any DCC set $\Ii$ rather than any finite set $\Ii$. 

Hence we may assume that $\widetilde{x}\in \widetilde{E'}\cap \widetilde{E}$ for some unique $\widetilde{f}$-exceptional divisor $\widetilde{E'}$. Let $\widetilde{b'}:=\mult_{\widetilde{E'}}B_{\widetilde{X}}=1-a(\widetilde{E'},X,B)$. 

We may show that there exist $\epsilon, \gamma\in (0,1]$ depending only on $\Ii$, such that if $a(\widetilde{E'},X,B)-a(\widetilde{E},X,B)<\epsilon$, or equivalently $\widetilde{b}-\epsilon\le \widetilde{b'}\le \widetilde{b}$, then $$\{\sum_i n_i b_i+n\widetilde{b}+n'\widetilde{b'}-1> 0\mid n_i,n,n'\in \mathbb{Z}_{\geq 0}\}\subseteq [\gamma,+\infty),$$
see Lemma \ref{DCC set gap}. Thus we may replace $(X\ni x ,B)$ by $(\widetilde{X}\ni\widetilde{x},\widetilde{f}_{*}^{-1}B+\widetilde{b}\widetilde{E}+\widetilde{b'}\widetilde{E'})$, and we are done by Theorem \ref{nak smooth}. This is why we need to show Nakamura's conjecture for any set that satisfies $\{\sum_i n_ib_i-1> 0\mid n_i\in \mathbb{Z}_{\geq 0}\}\subseteq [\gamma,+\infty)$ rather than any DCC set. And when $a(\widetilde{E'},X,B)-a(\widetilde{E},X,B)\geq\epsilon$, we may show that $\mult_{\widetilde{E'}}B_{\widetilde{X}}$ belongs to a DCC set depending only on $\Ii$. Again,  we are done by Theorem \ref{nak smooth}.

\medskip

\textit{Sketch of the proof of Theorem \ref{thm accformld in dim 2}.}
Let $(X\ni x,B:=\sum_i b_iB_i)$ be an lc surface germ. It is well known that if $\mld(X\ni x,B)>1$, then $X$ is smooth near $x$, and $\mld(X\ni x,B)=2-\sum n_ib_i$ satisfies the ACC, where $n_i\in \Zz_{\ge0}$ (c.f. Lemma \ref{Terminal}). Thus it suffices to consider the case when $\mld(X\ni x,B)\le 1$. We may assume that $\mld(X\ni x,B)\ge \epsilon_0$ for some fixed $\epsilon_0\in (0,1)$. 

Before giving the main ingredients in the proof of Theorem \ref{thm accformld in dim 2} for this case in our paper, we would like to give a brief sketch of both Alexeev's approach and Shokurov's idea. 

Roughly speaking, Alexeev's approach is from the bottom to the top. As one of key steps in Alexeev's proof \cite[Theorem 3.8]{Ale93}, he claimed that if the number of vertices of the dual graph of the minimal resolution of $X\ni x$ is larger enough, then $\pld(X\ni x,B)=\mld(X\ni x,B)$, see \cite[Lemma 4.5]{CH20} for a precise statement. Hence it suffices to show: 
\begin{enumerate}
    \item the ACC for pld's, and
    \item the ACC for MLDs for any fixed germ $X\ni x$.
\end{enumerate}
For (1), by the classification of surface singularities and results in linear algebra, Alexeev showed that there is an explicit formula for computing $\pld(X\ni x,B)$, and then the ACC for pld's follows from it (see \cite[Lemma 3.3]{Ale93}). For (2), let $E$ be an exceptional divisor over $X\ni x$, such that $a(E,X,B)=\mld(X\ni x,B)$, it is enough to show Conjecture \ref{conjecture: DCCmustatanakamura} for smooth surface germs when $\Ii$ is a DCC set.


\medskip

Shokurov's idea is that we should prove Theorem \ref{thm accformld in dim 2} from the top to the bottom, that is we should construct some resolution $f:Y\to X$, such that $a(E_0,X,B)=\mld(X\ni x,B)$ for some $f$-exceptional divisor $E_0$ on $Y$, and then we should reduce Theorem \ref{thm accformld in dim 2} to the case when the dual graph of $f$ belongs to a finite set.

\medskip

In this paper, we follow Shokurov's idea and construct such a model by Lemma \ref{Resolution}. Again, $E_0$ is the only $f$-exceptional $(-1)$-curve as well as the only $f$-exceptional divisor such that $a(E_0,X,B)=\mld(X\ni x,B)$ provided that $\mld(X\ni x,B)\neq \pld(X\ni x,B)$. We may show that either the dual graph of $f: Y\to X$ belongs to a finite set (see Theorem \ref{reduce to pld}) or $f$ is the minimal resolution of $X\ni x$. As the final step, we will show the set of MLDs satisfies the ACC in both cases. The latter case is the so called ``the ACC for pld's for surfaces''.

\medskip

Theorem \ref{reduce to pld} could be regarded as a counterpart of Alexeev's claim when $\pld(X\ni x,B)\neq\mld(X\ni x,B)$.
\begin{thm}\label{reduce to pld}
Let $\epsilon_0\in (0,1)$ be a real number, and $\Ii\subseteq[0,1]$ a set which satisfies the DCC. Then there exists a finite set of dual graphs $\{\dg_i\}_{1\leq i\leq N'}$ which only depends on $\epsilon_0$ and $\Ii$ satisfying the following.

If $(X\ni x, B)$ is an lc surface germ, such that $B\in\Ii$, $\mld(X\ni x, B)\neq \pld(X\ni x,B)$, and $\mld(X\ni x,B)\in [\epsilon_0,1]$, then there exists a projective birational morphism $f: Y\to X$, such that $a(E,X,B)=\mld(X\ni x, B)$ for some $f$-exceptional divisor $E$, and the dual graph of $f$ belongs to $\{\dg_i\}_{1\leq i\leq N'}$. Moreover, $\mld(X\ni x,B)$ belongs to an ACC set which only depends on $\Ii$.
\end{thm}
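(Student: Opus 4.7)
The plan is to follow Shokurov's top-down approach: construct a good birational model via Lemma \ref{Resolution}, then bound the combinatorics of its dual graph using a gap principle derived from the ACC for pld's and the arguments behind Theorem \ref{nak smooth}. First, apply Lemma \ref{Resolution} to obtain a projective birational morphism $f:Y\to X$ together with an $f$-exceptional divisor $E_0$ satisfying $a(E_0,X,B)=\mld(X\ni x,B)$; because $\mld(X\ni x,B)\neq\pld(X\ni x,B)$, $E_0$ is the unique $f$-exceptional $(-1)$-curve and the dual graph $\dg$ of $f$ is a chain $E_{-n_1},\ldots,E_0,\ldots,E_{n_2}$. It suffices to uniformly bound both the length $n=n_1+n_2+1$ and the weights $w_i=-(E_i\cdot E_i)$ in terms of $\epsilon_0$ and $\Ii$.

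Next, I would factor $f$ through the minimal resolution $\widetilde{f}:\widetilde{X}\to X$, writing $f=\widetilde{f}\circ g$. By Lemma \ref{Resolution}(5), all $g$-exceptional divisors share a unique center $\widetilde{x}\in\widetilde{X}$, which lies on some $\widetilde{f}$-exceptional divisor $\widetilde{E}$ with $a(\widetilde{E},X,B)=\pld(X\ni x,B)$. The ACC for pld's (Theorem \ref{finite pld}) forces $\widetilde{b}:=\mult_{\widetilde{E}}B_{\widetilde{X}}$ to lie in a DCC set depending only on $\Ii$. If $\widetilde{x}$ additionally lies on a second $\widetilde{f}$-exceptional divisor $\widetilde{E}'$ with coefficient $\widetilde{b}'$, I distinguish two subcases according to whether $a(\widetilde{E}',X,B)-a(\widetilde{E},X,B)$ is small or not; the small case is handled by Lemma \ref{DCC set gap}, the other by direct DCC control. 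In either situation the coefficients near $\widetilde{x}$ satisfy a uniform gap property of the form
\[
\{\textstyle\sum_i n_i b_i + n\widetilde{b} + n'\widetilde{b}'-1 > 0 \mid n_i,n,n'\in\Zz_{\geq 0}\}\subseteq[\gamma,+\infty)
\]
for some $\gamma>0$ depending only on $\Ii$ and $\epsilon_0$.

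With this gap property in hand, I adapt the argument sketched for Theorem \ref{nak smooth} to the smooth germ $\widetilde{X}\ni\widetilde{x}$ equipped with the modified boundary $\widetilde{f}_*^{-1}B+\widetilde{b}\widetilde{E}+\widetilde{b}'\widetilde{E}'$. Case 1 and Case 2 from that sketch bound $n_1,n_2$ and the depth $n_3$ of purely-exceptional blow-ups by explicit functions of $\gamma$. The lower bound $\mld(X\ni x,B)\geq\epsilon_0$ then bounds the weights $w_i$ along the chain via an analogue of Lemma \ref{Bound -1}, since each $a_i=a(E_i,X,B)\geq\epsilon_0$ constrains the intersection numbers. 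Combining these yields a finite set $\{\dg_i\}_{1\leq i\leq N'}$ of possible dual graphs depending only on $\epsilon_0$ and $\Ii$.

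For the final claim, note that once $\dg$ is fixed, solving the linear system $(K_Y+B_Y)\cdot E_j=0$ along the chain expresses $a(E_0,X,B)=\mld(X\ni x,B)$ as an explicit linear function of the coefficients $b_i$ of $B$ (plus the combinatorial data of $\dg$). Finitely many such linear functions evaluated on the DCC set $\Ii$ produce a set satisfying the ACC. The main obstacle throughout is the coefficient control on $\widetilde{X}$: the full set $\{\widetilde{b}_i\}$ of coefficients of $B_{\widetilde{X}}$ need not satisfy DCC (as occurs at Du Val points), so one cannot apply Theorem \ref{nak smooth} directly to $\widetilde{X}$; instead one must use ACC for pld's together with Lemma \ref{DCC set gap} to force only the coefficients at the specific point $\widetilde{x}$ into a gap regime, which is precisely what makes this reduction delicate.
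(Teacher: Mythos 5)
Your construction of $f:Y\to X$ via Lemma \ref{Resolution} matches the paper, and your plan for the final ACC statement (solving the intersection-theoretic linear system on a fixed dual graph, i.e.\ Lemma \ref{Fixed graph}) is also what the paper does. The problem is in the middle. By passing to $\widetilde{X}\ni\widetilde{x}$ and running the machinery of Theorem \ref{nak smooth} there, you only bound the sub-chain of $g$-exceptional divisors, where $g:Y\to\widetilde{X}$; but the dual graph of $f$ also contains the strict transforms of all $\widetilde{f}$-exceptional divisors of the minimal resolution, and nothing in your argument bounds how many of these there are. Since any smooth $Y\to X$ factors through the minimal resolution, you cannot avoid bounding the length of the minimal resolution's chain, and bounding the weights via $a_i\ge\epsilon_0$ (Lemma \ref{Weight}(1)), which is how you invoke the ``analogue of Lemma \ref{Bound -1}'', does not control the number of vertices. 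So the claimed finiteness of $\{\dg_i\}_{1\le i\le N'}$ is not established as written.

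The fix --- and the paper's actual proof --- is much shorter and makes the whole detour through $\widetilde{X}$, the ACC for pld's, and Lemma \ref{DCC set gap} unnecessary: that detour is the right strategy for Theorem \ref{thm: nak conj dcc not fix germ}, where there is no lower bound on the mld, but here the hypothesis $\mld(X\ni x,B)\ge\epsilon_0$ lets one bound the number of vertices of the entire chain $\dg$ directly. If $E_0$ is adjacent to only one vertex, Lemma \ref{bound chain by edge $(-1)$-curve}(1) applies (with $\gamma$ from Lemma \ref{dcc set lower bound} for the DCC set $\Ii$, and $a_1>a_0$ because $E_0$ is the unique $f$-exceptional divisor computing the mld by Lemma \ref{Resolution}(2)) and gives at most $1+\frac{1}{\gamma}$ vertices. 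If $E_0$ is adjacent to two vertices, Lemma \ref{Bound -1} bounds the total number of vertices of $\dg$ --- including the minimal-resolution part --- by $\lfloor \frac{8}{\epsilon_0}\rfloor$, precisely because the log discrepancies must increase by at least $\frac{\epsilon_0}{3}$ at each step away from $E_0$ once one leaves the $(-2)$-curve region. Together with Lemma \ref{Weight}(1) for the weights this puts $\dg$ in a finite set depending only on $\epsilon_0$ and $\Ii$, and Lemma \ref{Fixed graph} then gives the ACC for the corresponding minimal log discrepancies.
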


We will give another proof of the ACC for pld's for surfaces which does not rely on the explicit formula for pld's.
\begin{thm}\label{finite pld}
Let $\Ii\subseteq[0,1]$ be a set which satisfies the DCC. Then
	$$\mathrm{Pld}(2,\Ii):=\{\pld(X\ni x,B)\mid (X\ni x,B) \text{ is lc}, \dim X=2, B\in \Ii\},$$
	 satisfies the ACC.
\end{thm}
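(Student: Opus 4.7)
I would argue by contradiction, assuming a strictly increasing sequence $\epsilon_k := \pld(X_k \ni x_k, B_k) \nearrow \epsilon_\infty$ with $B_k \in \Ii$; for $k \gg 0$, $\epsilon_k \ge \epsilon_\infty/2 > 0$, so each $X_k \ni x_k$ is klt. Let $\widetilde{f}_k : \widetilde{X}_k \to X_k$ be the minimal resolution, with exceptional divisors $\widetilde{E}_{k,i}$ (smooth rational, $w_{k,i} := -\widetilde{E}_{k,i}^2 \ge 2$), and write $B_{\widetilde{X}_k} = \widetilde{f}_{k*}^{-1} B_k + \sum_i \widetilde{b}_{k,i} \widetilde{E}_{k,i}$. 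Intersecting $K_{\widetilde{X}_k} + B_{\widetilde{X}_k} = \widetilde{f}_k^{*}(K_{X_k}+B_k)$ with each $\widetilde{E}_{k,i}$ and applying adjunction yields, componentwise, $w_{k,i}\widetilde{b}_{k,i} - \sum_{j \text{ adj.\ to } i}\widetilde{b}_{k,j} = w_{k,i} - 2 + c_{k,i}$, with $c_{k,i} := \widetilde{E}_{k,i}\cdot\widetilde{f}_{k*}^{-1}B_k \ge 0$. In matrix form $M_k\,\widetilde{b}_k = v_k$, where $M_k$ is the negative intersection matrix, a positive-definite $M$-matrix whose inverse has strictly positive entries, so each $\widetilde{b}_{k,j}$ is non-negative and non-decreasing in the inputs $v_{k,i}$.

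The first step is to confine the dual graph $\dg_k$ of $\widetilde{f}_k$ to finitely many combinatorial types depending only on $\epsilon_\infty$, using $\widetilde{b}_{k,j} \le 1 - \epsilon_\infty/2$. A vertex with $w_{k,i}$ too large is ruled out since $\widetilde{b}_{k,i} \ge (M_k^{-1})_{ii}(w_{k,i}-2)$ would exceed this bound; a long chain of $(-2)$-curves meeting a nontrivial $c_{k,i}$ is likewise ruled out, because the DCC of $\Ii$ bounds any nonzero $c_{k,i}$ below by some $\gamma_0 > 0$ (Lemma \ref{dcc set lower bound} style), and continuant-type estimates on the inverse of a tridiagonal $M$-matrix propagate this perturbation along the chain, pushing some $\widetilde{b}_{k,j}$ toward $1$. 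Together with the classification of klt surface minimal resolution dual graphs (a chain or a tree with a single triple vertex, all weights $\ge 2$), these exclusions pin $\dg_k$ in a finite set. After passing to a subsequence I fix one dual graph $\dg$ with weights $(w_i)$.

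On this fixed graph each $c_{k,i}$ is bounded above, so by the DCC of $\Ii$ only boundedly many components of $\widetilde{f}_{k*}^{-1} B_k$ meet each $\widetilde{E}_i$, with bounded intersection multiplicities. A further subsequence fixes the integer multiplicities $m_{\ell,i}$, writing $c_{k,i} = \sum_\ell m_{\ell,i}\, b_{k,\ell}$ with $b_{k,\ell} \in \Ii$ drawn from a fixed finite index set. One more subsequence, using DCC of $\Ii$, makes each $b_{k,\ell}$ non-decreasing in $k$; then $v_{k,i}$, and hence by non-negativity of $M^{-1}$ every $\widetilde{b}_{k,j}$, is non-decreasing in $k$. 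Therefore $\epsilon_k = \min_j(1 - \widetilde{b}_{k,j})$ is non-increasing in $k$, contradicting the strict increase of $\epsilon_k$.

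The main obstacle is Step 1: effectively bounding $\dg_k$ from a lower bound on $\pld$ without invoking Alexeev's explicit formula for pld. The technical heart is the $M$-matrix/continuant estimate ensuring that a nontrivial source $c_{k,i}$ on a long $(-2)$-chain must propagate to some component of $M_k^{-1} v_k$ close to $1$; once this bound is in place, the DCC/linear-algebra argument in the remaining steps is essentially automatic.
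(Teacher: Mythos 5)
Your argument has a genuine gap at what you yourself identify as the main obstacle: the claim in Step~1 that a lower bound $\pld \ge \epsilon_\infty/2$ together with the DCC on $\Ii$ confines the dual graph $\dg_k$ of the minimal resolution to a finite set. This is false. The weights are indeed bounded (this is Lemma~\ref{Weight}(1), $w_{k,i}\le 2/a_{k,i}$), but the \emph{number of vertices} is not: the $A_n$ Du Val singularities with $B=0$ have $\pld=1$ and a chain of $n$ $(-2)$-curves for every $n$. Your proposed exclusion of ``a long chain of $(-2)$-curves meeting a nontrivial $c_{k,i}$'' is also quantitatively wrong: for a chain of $n$ $(-2)$-curves with $c_1=\gamma_0$ and $c_j=0$ for $j\ge 2$, the inverse of the tridiagonal matrix gives $\widetilde{b}_{k,j}=\gamma_0\,(n+1-j)/(n+1)$, so the maximal coefficient tends to $\gamma_0$ (not to $1$) and $\pld = 1-\gamma_0 n/(n+1)\ge 1-\gamma_0$ for all $n$. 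Taking $\Ii=\{\gamma_0\}$ with $\gamma_0<1-\epsilon_0$ produces infinitely many distinct dual graphs all satisfying your hypotheses, so the perturbation does not ``push some $\widetilde{b}_{k,j}$ toward $1$'' and the reduction to finitely many graphs collapses. Since everything after Step~1 (fixing $\dg$, fixing multiplicities, monotonizing the $b_{k,\ell}$) presupposes a fixed graph, the proof does not go through.

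The part of your argument that does work is precisely the paper's Lemma~\ref{Fixed graph}: on a \emph{fixed} dual graph, $M^{-1}$ has entries of constant sign, the inputs $v_{k,i}$ lie in a DCC set, hence the $\widetilde{b}_{k,j}$ lie in a DCC set and the log discrepancies in an ACC set. The paper's proof of Theorem~\ref{finite pld} uses this only for the finitely many exceptional graphs; the real content is the treatment of unboundedly long chains (types $\mathcal{GI}_m$ and $\mathcal{GT}_{1,1,m}$), where one splits the graph into a bounded piece and an arbitrarily long tail of $(-2)$-curves with $f_{i*}^{-1}B_i\cdot E_{k,i}=0$, shows the common difference $s_i=a_{k+1,i}-a_{k,i}$ along that tail must stabilize to $0$ after passing to a subsequence, and then runs a telescoping monotonicity induction on $1=\sum_k(a_{k-1,i}-a_{k,i})+a_{0,i}$ to force each summand, in particular $a_{0,i}=\pld(X_i\ni x_i,B_i)$, to be constant. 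That mechanism is absent from your proposal and cannot be replaced by the finiteness claim.
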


\section{Preliminaries}

We adopt the standard notation and definitions in \cite{KM98}, and will freely use them.

\subsection{Arithmetic of sets}
\begin{defn}[DCC and ACC sets]\label{def: DCC and ACC}        
    We say that $\Ii\subseteq\Rr$ satisfies the \emph{descending chain condition} $($DCC$)$ or $\Ii$ is a DCC set if any decreasing sequence $a_{1} \ge a_{2} \ge \cdots$ in $\Ii$ stabilizes. We say that $\Ii$ satisfies the \emph{ascending chain condition} $($ACC$)$ or $\Ii$ is an ACC set if any increasing sequence in $\Ii$ stabilizes.
\end{defn}

\begin{lem}\label{dcc set lower bound}
Let $\Ii\subseteq [0,1]$ be a set which satisfies the DCC, and $n$ a non-negative integer. There exists a positive real number $\gamma$ which only depends on $n$ and $\Ii$, such that $$\{\sum_i n_ib_i-n>0\mid b_i\in \Ii, n_i\in \Zz_{\geq 0}\}\subseteq [\gamma,+\infty).$$
\end{lem}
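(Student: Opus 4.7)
The plan is to argue by contradiction: suppose there exist tuples realizing positive values $s_k := \sum_i n_{i,k} b_{i,k} - n > 0$ with $s_k \to 0$, where $b_{i,k}\in \Ii$ and $n_{i,k}\in \Zz_{\geq 0}$, and derive an impossibility purely from the DCC property of $\Ii$.

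First I would observe that $\Ii^+ := \Ii \cap (0,1]$ has a positive minimum $\delta > 0$: any sequence in $\Ii^+$ decreasing to $0$ would give a strictly decreasing sequence in $\Ii$, contradicting DCC. After discarding the irrelevant summands with $n_{i,k}=0$ or $b_{i,k}=0$, every remaining term satisfies $n_{i,k} b_{i,k}\geq \delta$, while the total $\sum_i n_{i,k} b_{i,k} = n + s_k \leq n+1$ for $k$ large. Hence $\sum_i n_{i,k} \leq (n+1)/\delta$, so both the number of summands $m_k$ and each multiplicity $n_{i,k}$ are bounded by an integer depending only on $n$ and $\Ii$.

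Next, since the integer data $(m_k, n_{1,k},\ldots,n_{m_k,k})$ ranges over a finite set, I pass to a subsequence on which $m_k=m$ and $n_{i,k}=n_i$ are constant (after reindexing). Then for each $i$ the sequence $(b_{i,k})_k$ lies in $\Ii^+$, which is itself DCC. Extracting monotone subsequences iteratively for $i = 1,\ldots,m$ (every real sequence has a monotone subsequence, and a strictly decreasing one is forbidden by DCC), I may assume each $(b_{i,k})_k$ is non-decreasing in $k$. Consequently $s_k = \sum_{i=1}^{m} n_i b_{i,k} - n$ is non-decreasing in $k$, yet $s_k>0$ and $s_k\to 0$ force $s_k \equiv 0$ eventually, the desired contradiction. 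The only place requiring care is the reduction to finitely many summands with fixed integer multiplicities; once that is arranged, the DCC-monotonicity step closes the argument.
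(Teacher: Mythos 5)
Your proof is correct and follows essentially the same route as the paper, which simply invokes the standard fact that $\{\sum_i n_ib_i-n\mid b_i\in\Ii,\ n_i\in\Zz_{\geq 0}\}$ satisfies the DCC; your subsequence-extraction argument is precisely the usual proof of that fact, specialized to a sequence of sums tending to $0$. The only cosmetic point is that the final contradiction is cleaner stated as: a non-decreasing sequence converging to $0$ satisfies $s_k\le 0$ for all $k$, contradicting $s_k>0$.
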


\begin{proof}
The existence of $\gamma$ follows from that the set $\{\sum_i n_ib_i-n\mid b_i\in \Ii, n_i\in \Zz_{\geq 0}\}$ satisfies the DCC.
\end{proof}

\begin{defn}
Let $\epsilon\in \Rr, I\in \Rr\backslash\{0\}$, and $\Ii\subseteq \Rr$ a set of real numbers. We define $\Ii_\epsilon:=\cup_{b\in \Ii} [b-\epsilon, b]$, and $\frac{1}{I}\Ii:=\{\frac{b}{I}\mid b\in \Ii\}$.
\end{defn}

\begin{lem}\label{DCC set gap}
Let $\Ii\subseteq [0,1]$ be a set which satisfies the DCC. Then there exist positive real numbers $\epsilon,\delta\leq 1$, such that $$\{\sum_i n_ib_i'-1>0\mid b_i'\in \Ii_{\epsilon}\cap[0,1], n_i\in \Zz_{\geq 0}\}\subseteq [\delta,+\infty).$$
\end{lem}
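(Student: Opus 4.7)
The plan is to reduce the statement to Lemma \ref{dcc set lower bound} applied to $\Ii$ itself. The key quantitative observation is that perturbing each coefficient $b_i \in \Ii$ downward by at most $\epsilon$ decreases $\sum_i n_i b_i$ by at most $\epsilon \cdot \sum_i n_i$, so once $\sum_i n_i$ is controlled in the critical range just above $1$, we can choose $\epsilon$ small enough to preserve the gap above $1$ that Lemma \ref{dcc set lower bound} produces.

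First I would dispense with the trivial case $\Ii \subseteq \{0\}$, in which the indicated set is empty. Otherwise the DCC hypothesis forces the existence of $m := \min(\Ii \cap (0,1]) > 0$, and Lemma \ref{dcc set lower bound} (applied with $n=1$) produces $\gamma \in (0,1]$ such that every positive value of $\sum_i n_i b_i - 1$ with $b_i \in \Ii$ and $n_i \in \Zz_{\ge 0}$ is at least $\gamma$. I will set $\delta := \gamma/2$ and choose $\epsilon \in (0,1]$ small in terms of $m$ and $\gamma$.

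Given $b_i' \in \Ii_\epsilon \cap [0,1]$ and $n_i \in \Zz_{\ge 0}$ with $\sum_i n_i b_i' > 1$, I would first discard all indices where $b_i' = 0$. Choosing $\epsilon < m/2$ ensures that every remaining $b_i'$ lies in $[m/2, 1]$ and admits a companion $b_i \in \Ii$ satisfying $b_i - \epsilon \le b_i' \le b_i$. Setting $S := \sum_i n_i b_i$, one has $S \ge \sum_i n_i b_i' > 1$, so Lemma \ref{dcc set lower bound} gives $S - 1 \ge \gamma$, while on the other hand
\[
\sum_i n_i b_i' \;\ge\; S - \epsilon \sum_i n_i.
\]

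The main obstacle is that $\sum_i n_i$ is a priori unbounded, and I would resolve it by a two-case split on the size of $\sum_i n_i b_i'$. If $\sum_i n_i b_i' > 2$, then $\sum_i n_i b_i' - 1 > 1 \ge \delta$ trivially. Otherwise $\sum_i n_i b_i' \le 2$, which together with $b_i' \ge m/2$ forces $\sum_i n_i \le 4/m$; taking $\epsilon \le m\gamma/8$ then yields $\epsilon \sum_i n_i \le \gamma/2$, and hence $\sum_i n_i b_i' \ge S - \gamma/2 \ge 1 + \gamma/2$. In both cases $\sum_i n_i b_i' - 1 \ge \delta$, so the choice $\epsilon := \min\{m/2,\, m\gamma/8\}$ and $\delta := \gamma/2$ works.
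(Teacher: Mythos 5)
Your proof is correct and follows essentially the same route as the paper's: both reduce to the case where $\sum_i n_i$ is bounded (via the observation that $\sum_i n_ib_i'\le 2$ together with a positive lower bound on the nonzero $b_i'$ forces $\sum_i n_i\le 4/m$), and then choose $\epsilon$ small enough that the total downward perturbation $\epsilon\sum_i n_i$ eats at most half of the gap $\gamma$ supplied by Lemma \ref{dcc set lower bound}. The only cosmetic difference is that you track the minimum $m$ of $\Ii\cap(0,1]$ separately from $\gamma$, whereas the paper chooses a single $\gamma$ serving both roles and takes $\epsilon=\gamma^2/8$.
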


\begin{proof}
We may assume that $\Ii\setminus \{0\}\neq \emptyset$, otherwise we may take $\epsilon=\delta=1$.

Since $\Ii$ satisfies the DCC, by Lemma \ref{dcc set lower bound}, there exists a real number $\gamma\in (0,1]$ such that $\Ii\setminus\{0\}\subseteq (\gamma,1]$, and $\{\sum_i n_ib_i-1>0\mid n_i\in \Zz_{\ge 0}, b_i\in\Ii\}\subseteq [\gamma, +\infty)$. It suffices to prove that there exist $0<\epsilon,\delta<\frac{\gamma}{2}$, such that the set $\{\sum_i n_ib_i'-1\in (0,1]\mid b_i'\in \Ii_{\epsilon}\cap [0,1], n_i\in \Zz_{\geq 0}\}$ is bounded from below by $\delta$, or equivalently $$\{\sum_{i}  n_ib_i'-1>0\mid b_i'\in \Ii_{\epsilon}\cap[0,1], n_i\in \Zz_{\ge0},\sum_{i} n_i\le \frac{4}{\gamma}\}\subseteq [\delta,+\infty).$$

We claim that $\epsilon=\frac{\gamma^2}{8}, \delta=\frac{\gamma}{2}$ have the desired property. Let $b_i'\in\Ii_{\epsilon}\cap[0,1]$ and $n_i\in\Zz_{\ge0}$, such that $\sum_i n_ib_i'-1>0$ and $\sum_{i} n_i\le \frac{4}{\gamma}$. We may find $b_i\in \Ii$, such that $0\le b_i-b_i'\leq\epsilon$ for any $i$. In particular, $\sum_i n_ib_i-1>0$. By the choice of $\gamma$, $\sum_i n_ib_i-1\geq \gamma$. Thus
\begin{align*}
    \sum_i n_ib_i'-1=(\sum_i n_ib_i-1)-\sum_i n_i(b_i-b_i')
    \geq \gamma-\frac{4}{\gamma}\epsilon=\frac{\gamma}{2},
\end{align*}
and we are done.
\end{proof}

We will use the following lemma frequently without citing it in this paper.
\begin{lem}\label{lower bound for gamma}
Let $\Ii\subseteq [0,1]$ be a set, and $\gamma\in(0,1]$ be a real number. If $\{\sum_i n_ib_i-1>0\mid b_i\in \Ii, n_i\in \Zz_{\geq 0}\}\subseteq [\gamma, +\infty)$, then $\Ii\setminus \{0\}\subseteq [\gamma,1]$.
\end{lem}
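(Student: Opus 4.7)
The plan is as follows. Take any $b \in \Ii \setminus \{0\}$, so $b \in (0,1]$; we want to exhibit a combination $\sum_i n_i b_i - 1$ that lies in the interval $(0, b]$, because then the hypothesis will force $b \geq \gamma$.

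To produce such a combination, I will use a single coefficient: let $n$ be the smallest positive integer such that $nb > 1$ (this exists because $b > 0$). By minimality, $(n-1)b \leq 1$, hence $nb \leq 1 + b$, so
\[
0 < nb - 1 \leq b.
\]
Since $nb - 1$ is of the form $\sum_i n_i b_i - 1$ with $n_i \in \Zz_{\geq 0}$ and $b_i \in \Ii$, and it is strictly positive, the hypothesis gives $nb - 1 \geq \gamma$. Combining, $b \geq nb - 1 \geq \gamma$. Since $b \leq 1$ was given, this shows $b \in [\gamma, 1]$, which is exactly the desired inclusion $\Ii \setminus \{0\} \subseteq [\gamma, 1]$.

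There is essentially no obstacle here; the only thing to notice is that one must avoid choosing $n = 1$ when $b < 1$ (which would give a non-positive value excluded from the hypothesis), and the minimal-$n$ trick handles this cleanly while also producing the sharp bound $nb - 1 \leq b$.
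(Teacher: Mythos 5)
Your proof is correct and is essentially the paper's argument: the paper picks $n=\lfloor \frac{1}{b}\rfloor+1$, which is exactly your minimal $n$ with $nb>1$, and derives $0<nb-1\le b$ in the same way (phrased as a contradiction with $b<\gamma$ rather than directly). No substantive difference.
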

\begin{proof}
Otherwise, we may find $b\in\Ii$, such that $0<b<\gamma$. Then $0<(\lfloor \frac{1}{b}\rfloor+1)\cdot b-1\le b<\gamma$, a contradiction. 
\end{proof}

\subsection{Singularities of pairs}


\begin{defn}
	A pair $(X,B)$ consists of a normal quasi-projective variety $X$ and an $\Rr$-divisor $B\ge0$ such that $K_X+B$ is $\Rr$-Cartier. A \emph{germ} $(X\ni x, B:=\sum_i b_iB_i)$ consists of a pair $(X,B)$, and a closed point $x\in X$, such that $b_i>0$, and $B_i$ are distinct prime divisors on $X$ with $x\in\cap_i \Supp B_i$. We call it a \emph{surface germ} if $\dim X=2$. $(X\ni x,B)$ is called an \em{lc (respectively klt, canonical, plt, terminal) surface germ} if $(X\ni x,B)$ is a surface germ, and $(X,B)$ is lc (respectively klt, canonical, plt, terminal) near $x$.
\end{defn}

Let $X$ be a normal quasi-projective surface and $x\in X$ a closed point. Then a birational morphism $f: Y\to X$ (respectively $f: Y\to X\ni x$) is called a \emph{minimal resolution} of $X$ (respectively $X\ni x$) if $Y$ is smooth (respectively smooth over a neighborhood of $x\in X$) and there is no $(-1)$-curve on $Y$ (respectively over a neighborhood of $x\in X$).

Note that the existence of resolutions of singularities for surfaces (see \cite{Lip78}) and the minimal model program for surfaces (see \cite{Tan14} and \cite{Tan18}) are all known in positive characteristic. In particular, for any surface $X$ (respectively surface germ $X\ni x$), we can construct a minimal resolution $\tilde{f}: \widetilde{X}\to X$ (respectively $\widetilde{f}: \widetilde{X} \to X\ni x$).

\begin{defn}\label{defn: mld and pld}
	Let $(X\ni x,B)$ be an lc germ. We say a prime divisor $E$ is over $X\ni x$ if $E$ is over $X$ and $\Center_X E=x$.
	
	The \emph{minimal log discrepancy} of $(X\ni x,B)$ is defined as
	$$\mld(X\ni x,B):=\min\{a(E,X,B)\mid E \text{ is a prime divisor over } X\ni x\}.$$
	
	Let $f:Y\to X$ be a projective birational morphism, we denote it by $f: Y\to X\ni x$ if $\Center_{X}E=x$ for any $f$-exceptional divisor $E$. 
	
	Let $\widetilde{f}: \widetilde{X} \to X\ni x$ be the minimal resolution of $X\ni x$, and we may write $K_{\widetilde{X}}+B_{\widetilde{X}}+\sum_{i} (1-a_i)E_i= \widetilde{f}^{*}(K_{X}+B),$ where $B_{\widetilde{X}}$ is the strict transform of $B$, $E_i$ are $\widetilde{f}$-exceptional prime divisors and $a_i:=a(E_i,X,B)$ for all $i$. The \emph{partial log discrepancy} of $(X\ni x,B)$, $\pld(X\ni x,B)$, is defined as follows. 
     $$\pld(X\ni x,B):=\left\{
     \begin{array}{lcl}
     \min_{i}\{a_i\}       &{\text{if $x\in X$ is a singular point,}}\\
     +\infty      &{\text{if $x\in X$ is a smooth point.}}
     \end{array} \right.$$
\end{defn}

\subsection{Dual graphs}


\begin{defn}[c.f. {\cite[Definition 4.6]{KM98}}]
Let $C=\cup_i C_i$ be a collection of proper curves on a smooth surface $U$. We define the \emph{dual graph} $\dg$ of $C$ as follows.
 \begin{enumerate}
     \item The vertices of $\dg$ are the curves $C_j$.
     \item Each vertex is labelled by the negative self intersection of the corresponding curve on $U$, we call it the \emph{weight} of the vertex (curve).
     \item The vertices $C_i,C_j$ are connected with $C_i\cdot C_j$ edges.
 \end{enumerate}
 
Let $f: Y\to X\ni x$ be a projective birational morphism with exceptional divisors $\{E_i\}_{1\leq i\leq m}$, such that $Y$ is smooth. Then the dual graph $\dg$ of $f$ is defined as the dual graph of $E=\cup_{1\leq i\leq m} E_i$. In particular, $\dg$ is a connected graph.
 \end{defn}
 
\begin{defn}\label{defn cycle and tree}
A \emph{cycle} is a graph whose vertices and edges can be ordered $v_1,\ldots,v_m$ and $e_1,\ldots,e_m$ $(m\ge 2)$, such that $e_i$ connects $v_i$ and $v_{i+1}$ for $1\leq i\leq m$, where $v_{m+1}=v_1$.

 Let $\dg$ be a dual graph with vertices $\{C_i\}_{1\leq i\leq m}$. We call $\dg$ a \emph{tree} if
 \begin{enumerate}
     \item $\dg$ does not contain a subgraph which is a cycle, and
     \item $C_i\cdot C_j\leq 1$ for all $1\leq i\neq j\leq m$.
 \end{enumerate}
 Moreover, if $C$ is a vertex of $\dg$ that is adjacent to more than three vertices, then we call $C$ a \emph{fork} of $\dg$. If $\dg$ contains no fork, then we call it a \emph{chain}.
 \end{defn}
 
 
\begin{lem}\label{lem tree}
Let $X\ni x$ be a surface germ. Let $Y,Y'$ be smooth surfaces, and let $f: Y\to X\ni x$ and $f':Y'\to X\ni x$ be two projective birational morphisms, such that $f'$ factors through $f$.
$$
\xymatrix{
Y' \ar[r]^{g} \ar[rd]_{f'} & Y\ar[d]^f \\
 & X\ni x
}
$$
If the dual graph of $f$ is a tree whose vertices are all smooth rational curves, then the dual graph of $f'$ is a tree whose vertices are all smooth rational curves.
\end{lem}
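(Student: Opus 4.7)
The plan is to factor $g$ as a sequence of point blow-ups and induct on the length. By the classical factorization theorem for birational morphisms between smooth surfaces, $g = g_n \circ \cdots \circ g_1$, where each $g_i \colon Y_i \to Y_{i-1}$ is the blow-up of a closed point $p_{i-1} \in Y_{i-1}$ with exceptional divisor $F_i$, $Y_0 = Y$, and $Y_n = Y'$. The base case $n = 0$ is the hypothesis, so I focus on the inductive step: assuming the dual graph of $Y_{i-1} \to X \ni x$ is a tree of smooth rational curves, show the same after the blow-up $g_i$.

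Two preliminary observations are essential. First, $p_{i-1}$ must lie on the exceptional locus of the composition $Y_{i-1} \to X$; otherwise $Y_{i-1} \to X$ would be a local isomorphism at $p_{i-1}$, and the strict transform of $F_i$ on $Y'$ would be an $f'$-exceptional divisor centered at $f(p_{i-1}) \neq x$, contradicting the convention $f'\colon Y' \to X \ni x$. Second, at most two vertices of the dual graph can pass through any single closed point of $Y_{i-1}$: if three smooth curves $C_1, C_2, C_3$ all contained $p_{i-1}$, then by the tree hypothesis their pairwise intersection numbers must be exactly one and they meet transversely at $p_{i-1}$, which forces the $3$-cycle $C_1$-$C_2$-$C_3$-$C_1$ as a subgraph and violates the tree property.

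The inductive step now splits into two cases. If $p_{i-1}$ lies on exactly one vertex $C$, then $F_i \cong \mathbb{P}^1$ meets the strict transform $\widetilde{C}$ transversely at one point and is disjoint from the (unchanged) strict transforms of all other vertices; the new dual graph gains $F_i$ as a leaf adjacent to $C$ and remains a tree. If $p_{i-1}$ lies on two vertices $C$ and $C'$ meeting transversely at $p_{i-1}$, then $\widetilde{C} \cap \widetilde{C'} = \emptyset$ on $Y_i$, while $F_i$ meets each of $\widetilde{C}, \widetilde{C'}$ transversely at one point, replacing the edge $C$-$C'$ by the path $C$-$F_i$-$C'$, again a tree. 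In both cases, the strict transforms of smooth rational curves remain smooth rational (blow-up of a smooth point of the surface lying on a smooth curve does not alter that curve), and $F_i \cong \mathbb{P}^1$ is itself a smooth rational curve, completing the induction. The only mildly delicate point is the \emph{at most two curves through a point} observation in the second preliminary step, which is where the tree hypothesis enters crucially; everything else is routine.
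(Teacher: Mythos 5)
Your proof is correct, and it shares the paper's overall skeleton (factor $g$ into point blow-ups and induct, reducing to a single blow-up), but the inductive step itself is argued quite differently. The paper works globally with intersection theory: it pulls back to get $E_i'\cdot E_j'\le E_i\cdot E_j\le 1$ and $E'\cdot E_i'\le 1$, and then argues by contradiction that any cycle in the dual graph of $f'$ would descend to a cycle in the dual graph of $f$. You instead analyze the blow-up locally at its center: using the tree hypothesis you show at most two vertices pass through the center (and, if two, they meet transversally there), so the dual graph either gains a leaf or has one edge subdivided --- both operations visibly preserving the tree property. Your route yields strictly more information (an explicit description of how the graph changes, which is essentially the content of the paper's later Lemma \ref{smooth blow-up sequence} computations), at the cost of a short case analysis and the separate ``at most two curves through a point'' claim; the paper's route avoids the case analysis entirely but is terser about why a cycle through $E'$ forces adjacency of the corresponding curves downstairs. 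One cosmetic remark: your two cases implicitly assume the center lies on at least one vertex; the only way it can lie on none is when the dual graph below is empty (i.e.\ the morphism is an isomorphism over $x$), in which case the conclusion is trivial, so nothing is lost.
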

\begin{proof}
Let $g: Y'\to Y$ be the projective birational morphism such that $f\circ g=f'$. Since $g$ is a composition of blow-ups at smooth closed points, by inducion on the number of blow-ups, we may assume that $g$ is a single blow-up of $Y$ at a smooth closed point $y\in Y$.

Let $E'$ be the $g$-exceptional divisor on $Y'$, $\{E_i\}_{1\leq i\leq m}$ the set of distinct exceptional curves of $f$ on $Y$, and $\{E_i'\}_{1\leq i\leq m}$ their strict transforms on $Y'$. By assumption $E_i'\cdot E_j'\leq g^{*}E_i\cdot E_j'=E_i\cdot E_j\leq 1$ for $1\leq i\neq j\leq m$. Since $E_i$ is smooth, $0=g^{*}E_i\cdot E'\ge (E_i'+E')\cdot E'$. It follows that $E'\cdot E_i'\leq 1$ for $1\leq i\leq m$.

If the dual graph of $f'$ contains a cycle, then $E'$ must be a vertex of this cycle. Let $E',E_{i_1}',\ldots,E_{i_k}'$ be the vertices of this cycle, $1\leq k\leq m$. Then the vertex-induced subgraph by $E_{i_1},\ldots,E_{i_k}$ of the dual graph of $f$ is a cycle, a contradiction.
\end{proof}

The following lemma maybe well-known to experts (c.f. \cite[Lemma 4.2]{CH20}). For the reader’s convenience, we include the proof here.

\begin{lem}\label{Weight}
Let $\epsilon_0 \in (0,1]$ be two real numbers. Let $(X\ni x, B)$ be an lc surface germ, $Y$ a smooth surface, and $f:Y\to X\ni x$ a projective birational morphism with the dual graph $\dg$. Let $\{E_k\}_{1\leq k\leq m}$ be the set of vertices of $\mathcal{DG}$, and $w_k:=-E_k\cdot E_k$, $a_k:=a(E_k,X,B)$ for each $k$. Suppose that $a_k\leq 1$ for any $1\leq k\leq m$, then we have the following:
\begin{enumerate}
    \item $w_k\leq \frac{2}{a_k}$ if $a_k>0$, and in particular, $w_k\leq \frac{2}{\epsilon_0}$ for $1\leq k\leq m$ if $\mld(X\ni x, B)\geq \epsilon_0$.
    \item If $w_k\geq 2$ for some $k$, then for any $E_{k_1},E_{k_2}$ which are adjacent to $E_k$, we have $2a_k\leq a_{k_1}+a_{k_2}$. Moreover, if the equality holds, then $f_*^{-1}B\cdot E_k=0$, and either $w_k=2$ or $a_k=a_{k_1}=a_{k_2}=0$. 
    \item If $E_{k_0}$ is a fork, then for any $E_{k_1}, E_{k_2}, E_{k_3}$ which are adjacent to $E_{k_0}$ with $w_{k_i}\geq 2$ for $0\leq i\leq 2$, $a_{k_3}\geq a_{k_0}$. Moreover, if the equality holds, then $w_{k_i}=2$ and $f_*^{-1}B\cdot E_{k_i}=0$ for $0\leq i\leq 2$.
    \item Let $E_{k_0},E_{k_1},E_{k_2}$ be three vertices, such that $E_{k_1},E_{k_2}$ are adjacent to $E_{k_0}$. Assume that $a_{k_1}\geq a_{k_2}$, $a_{k_1}\geq \epsilon_0$, and $w_{k_0}\geq 3$, then $a_{k_1}-a_{k_0}\geq \frac{\epsilon_0}{3}$.
    \item If $E_{k_0}$ is a fork, and there exist three vertices $E_{k_1}, E_{k_2}, E_{k_3}$ which are adjacent to $E_{k_0}$ with $w_{k_i}\geq 2$ for $0\leq i\leq 3$, then $a(E,X,B)\geq a_{k_0}$ for any vertex $E$ of $\dg$.
    \item Let $\{E_{k_i}\}_{0\leq i\leq m'}$ be a set of distinct vertices such that $E_{k_i}$ is adjacent to $E_{k_{i+1}}$ for $0\leq i\leq m'-1$, where $m'\geq 2$. If $a_{k_0}=a_{k_{m'}}=\mld(X\ni x, B)>0$ and $w_{k_i}\geq 2$ for $1\leq i\leq m'-1$, then $a_{k_0}=a_{k_1}=\cdots =a_{k_{m'}}$ and $w_{k_i}=2$ for $1\leq i\leq m'-1$.
\end{enumerate}
\end{lem}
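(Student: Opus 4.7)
\textbf{Proof plan for Lemma \ref{Weight}.}

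The whole lemma rests on a single local numerical identity. Writing $K_Y+f_*^{-1}B+\sum_i(1-a_i)E_i=f^*(K_X+B)$ and intersecting with a fixed vertex $E_k$, which is a smooth rational curve (standard for lc surface singularities, so $K_Y\cdot E_k=-2+w_k$), produces the \emph{balance equation}
\[
w_k a_k + f_*^{-1}B\cdot E_k + \sum_{j\ne k}(1-a_j)(E_j\cdot E_k) \;=\; 2.
\]
Since $a_j\le 1$ and $E_j\cdot E_k\ge 0$ for $j\ne k$, all terms on the left are nonnegative. This immediately yields (1): $w_k a_k\le 2$ whenever $a_k>0$, hence $w_k\le 2/\mld(X\ni x,B)\le 2/\epsilon_0$.

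For (2), rewrite the balance equation at $E_k$ (with neighbors $E_{k_1},\dots,E_{k_r}$, $r\ge 2$, where $E_{k_1},E_{k_2}$ are the two specified ones) as
\[
a_{k_1}+a_{k_2}=w_k a_k+r-2+f_*^{-1}B\cdot E_k-\sum_{i\ge 3}a_{k_i}\ge w_k a_k+f_*^{-1}B\cdot E_k\ge 2a_k,
\]
using $a_{k_i}\le 1$ for $i\ge 3$ and $w_k\ge 2$. Tracking each inequality gives the equality statement. Part (4) is a variant: when $w_{k_0}\ge 3$ the same isolation gives $a_{k_1}+a_{k_2}\ge 3a_{k_0}$, so $a_{k_0}\le \tfrac{2}{3}a_{k_1}$ from $a_{k_1}\ge a_{k_2}$, hence $a_{k_1}-a_{k_0}\ge a_{k_1}/3\ge\epsilon_0/3$. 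For (3), combine the balance at $E_{k_0}$ (yielding $a_{k_1}+a_{k_2}+a_{k_3}\ge 2a_{k_0}+1$ after discarding the at most $r-3$ other neighbor contributions via $a_{k_i}\le 1$) with the upper bound $a_{k_j}\le (a_{k_0}+1)/2$ for $j=1,2$ obtained by applying (2) (or the $r=1$ case of the balance equation) to $E_{k_j}$ using $w_{k_j}\ge 2$; subtraction gives $a_{k_3}\ge a_{k_0}$, and the equality case comes by tracking the tight inequalities.

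For (6), (2) applied at each internal vertex says $2a_{k_i}\le a_{k_{i-1}}+a_{k_{i+1}}$, i.e.\ the sequence $(a_{k_i})$ is convex on $\{0,1,\dots,m'\}$. A convex sequence with equal endpoints satisfies $a_{k_i}\le a_{k_0}$; combined with the minimality $a_{k_i}\ge\mld=a_{k_0}$ this forces $a_{k_i}\equiv a_{k_0}$, and the equality clause in (2) then forces $w_{k_i}=2$ for $1\le i\le m'-1$.

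The main obstacle is (5): showing $a(E,X,B)\ge a_{k_0}$ for \emph{every} vertex, not just the three distinguished neighbors. My plan is to argue by induction on the distance from $E_{k_0}$ in $\dg$. By (3) applied to each of the three neighbors in turn, $a_{k_j}\ge a_{k_0}$ for $j=1,2,3$, which is the base case along each branch. For the inductive step along the branch containing $E_{k_j}$, if $E'$ is a neighbor of $E_{k_j}$ distinct from $E_{k_0}$, then isolating $a_{E'}$ in the balance at $E_{k_j}$ and bounding the remaining neighbor contributions by $1$ gives $a_{E'}\ge w_{k_j}a_{k_j}-a_{k_0}\ge 2a_{k_0}-a_{k_0}=a_{k_0}$. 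To continue further out along the branch, where the weight assumption is no longer given, I plan a contradiction argument: if some vertex $E^*$ in the branch attached to $E_{k_j}$ had $a_{E^*}<a_{k_0}$, pick the one closest to $E_{k_0}$, then the vertex immediately before it on the path to $E_{k_0}$ has value $\ge a_{k_0}$ and, via the balance equation together with (2)-type convexity along the chain from $E_{k_0}$ (which remains valid as long as intermediate weights are $\ge 2$), propagate a contradiction with the base case. Handling the possibility that an intermediate vertex has weight $1$ will require a separate argument using the minimality of $f$ near that vertex, and is the delicate step I anticipate working out last.
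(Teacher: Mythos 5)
Your treatment of parts (1)--(4) and (6) is correct and coincides with the paper's proof: everything is read off from the same balance equation obtained by intersecting $f^*(K_X+B)=K_Y+f_*^{-1}B+\sum_i(1-a_i)E_i$ with $E_k$, with the same isolation of neighbor terms, the same bound $a_{k_j}\le(1+a_{k_0})/w_{k_j}$ in (3), the same $(w_{k_0}-2)/w_{k_0}\ge 1/3$ computation in (4), and the same convexity-with-equal-endpoints argument in (6). One small caveat: you assert every $E_k$ is a smooth rational curve, which is not automatic for an lc (as opposed to klt) surface germ; the paper instead keeps the term $2-2p_a(E_k)$ and only uses $p_a(E_k)\ge 0$, which is all your inequalities need, so this does not affect the argument, though it should be phrased as an inequality rather than an equality.

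The genuine gap is in part (5), which you yourself flag as unresolved. Your induction gets one step beyond the three distinguished neighbors, but the continuation rests on applying the convexity inequality of (2) at every intermediate vertex of the path, which requires those vertices to have weight $\ge 2$; you defer the weight-$1$ case to ``a separate argument using the minimality of $f$ near that vertex,'' but no minimality hypothesis on $f$ appears in the lemma, so that plan has nothing to stand on as stated, and the step is simply not carried out. For comparison, the paper's proof of (5) is two lines: take the path $F_0=E_{k_0},F_1,\dots,F_{m'}=E$, get $a(F_1)\ge a(F_0)$ from (3) (which applies to \emph{any} neighbor of the fork, since the weight hypotheses there concern only $E_{k_0}$ and two other neighbors), and then get $a(F_{i+1})-a(F_i)\ge a(F_i)-a(F_{i-1})$ from (2), so the increments stay nonnegative. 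The paper thus also tacitly assumes the intermediate $F_i$ have weight $\ge 2$; the subtlety you noticed is real, but it is harmless in every application in the paper (there the dual graph has a unique $(-1)$-curve, which is always an endpoint of the relevant path), whereas your proposal neither closes it nor adopts the paper's shorter monotone-increment argument. To repair your write-up, either add the standing hypothesis under which (5) is actually used, or replace your contradiction scheme by the paper's direct application of (3) followed by (2).
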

\begin{proof}

For (1), we may write 
$$K_Y+f_*^{-1}B+\sum_{1\leq i\leq m}(1-a_i)E_i=f^*(K_X+B).$$
For each $1\leq k\leq m$, we have
$$0=(K_Y+f_*^{-1}B+\sum_{1\leq i\leq m}(1-a_i)E_i)\cdot E_k,$$
or equivalently,
\begin{align}\label{intersect E_k}
a_kw_k= 2-2p_a(E_k)-\sum_{i\neq k}(1-a_i)E_i\cdot E_k-f_*^{-1}B\cdot E_k.
\end{align}
So $a_kw_k\le 2$, and $w_k\le \frac{2}{a_k}$. 

\medskip

For (2), by (\ref{intersect E_k}), 
$$2a_k\leq a_kw_k\le a_{k_1}+a_{k_2}-f_*^{-1}B\cdot E_k\leq a_{k_1}+a_{k_2}.$$
If $2a_k=a_{k_1}+a_{k_2}$, then $f_*^{-1}B\cdot E_k=0$, and either $w_k=2$ or $a_k=a_{k_1}=a_{k_2}=0$.

\medskip

For (3), let $k=k_i$ in (\ref{intersect E_k}) for $i=1,2$, 
$$    a_{k_i}w_{k_i}\leq 1+a_{k_0}-(\sum_{j\neq k_0,k_i}(1-a_j)E_j\cdot E_{k_i}+f_*^{-1}B\cdot E_{k_i})\leq 1+a_{k_0},$$
or $a_{k_i}\le \frac{1+a_{k_0}}{w_{k_i}}$. Thus let $k=k_0$ in (\ref{intersect E_k}), we have
\begin{align*}
    a_{k_3}\ge &a_{k_0}w_{k_0}+1-a_{k_1}-a_{k_2}+f_*^{-1}B\cdot E_{k_0}\\
    \ge &a_{k_0}(w_{k_0}-\frac{1}{w_{k_1}}-\frac{1}{w_{k_2}})+(1-\frac{1}{w_{k_1}}-\frac{1}{w_{k_2}})\geq a_{k_0}.
\end{align*}
If the equality holds, then $w_{k_i}=2$ and $f_*^{-1}B\cdot E_i=0$ for $0\leq i\leq 2$.

\medskip
For (4), by (\ref{intersect E_k}), we have $a_{k_0}w_{k_0}\leq a_{k_1}+a_{k_2}-d$, where $d:=f^{-1}_{*}B\cdot E_{k_0}+\sum_{j\neq k_1,k_2,{k_0}}(1-a_j)E_j\cdot E_{k_0}$. Hence 
\begin{align*}
    a_{k_1}-a_{k_0}\ge \frac{(w_{k_0}-1) a_{k_1}-a_{k_2}+d}{w_{k_0}}\geq \frac{(w_{k_0}-2)a_{k_1}}{w_{k_0}}\geq \frac{\epsilon_0}{3}.
\end{align*} 

\medskip

For (5), we may assume that $E\neq E_{k_0}$. There exist $m'+1$ distinct vertices $\{F_i\}_{0\leq i\leq m'}$ of $\dg$, such that 
\begin{itemize}
   \item $F_0=E_{k_0}$, $F_{m'}=E$, and
    \item $F_i$ is adjacent to $F_{i+1}$ for $0\leq i\leq m'-1$.
\end{itemize}
Denote $a_i':=a(F_i,X,B)$ for $0\leq i\leq m'$. By (3), we have $a_1'\geq a_0'$, and by (2), $a_{i+1}'-a_{i}'\geq a_{i}'-a_{i-1}'$ for $1\leq i\leq m'-1$. Thus $a_m'-a_0'\geq 0$. 

\medskip

For (6), by (2) $a_{k_0}\leq a_{k_1}\leq\ldots \leq a_{k_{m'-1}}\leq a_{k_{m'}}$. Thus $a_{k_0}=\ldots =a_{k_{m'}}$. By (2) again, $w_{k_i}=2$ for $1\leq i\leq m'-1$.

\end{proof}

\begin{lem}\label{Classification of dual graphs}
Let $(X\ni x, B)$ be an lc surface germ. Let $Y$ be a smooth surface and $f:Y\to X\ni x$ a birational morphism with the dual graph $\dg$. If $\dg$ contains a $(-1)$-curve $E_0$, then 
\begin{enumerate}
    \item $E_0$ can not be adjacent to two $(-2)$-curves in $\dg$,
    \item if either $\mld(X\ni x, B)\neq \pld(X\ni x, B)$ or $\mld(X\ni x, B)>0$, then $E_0$ is not a fork in $\dg$, and
    \item if $E,E_0,\ldots,E_m$ are distinct vertices of $\dg$ such that $E$ is adjacent to $E_0$, $E_i$ is adjacent to $E_{i+1}$ for $0\leq i\leq m-1$, and $-E_i\cdot E_i=2$ for $1\leq i\leq m$, then $m+1<-E\cdot E=w$.
\end{enumerate}
$$
\begin{tikzpicture}
         \draw (2.5,0) circle (0.1);
         \node [above] at (2.5,0.1) {\footnotesize$w$};
         \draw (2.6,0)--(3,0);
         \draw (3.1,0) circle (0.1);
         \node [above] at (3.1,0.1) {\footnotesize$1$};
         \draw (3.2,0)--(3.6,0);
         \draw (3.7,0) circle (0.1);
         \node [above] at (3.7,0.1) {\footnotesize$2$};
         \draw [dashed](3.8,0)--(4.6,0);
         \draw (4.7,0) circle (0.1);
         \node [above] at (4.7,0.1) {\footnotesize$2$};
\end{tikzpicture}          
$$
\end{lem}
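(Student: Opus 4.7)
The plan is to treat (1) and (3) directly via the negative-definiteness of the intersection form on $f$-exceptional divisors (Mumford/Zariski), and to treat (2) via a Castelnuovo contraction combined with the tree structure of the dual graph of a resolution of a klt surface germ (Lemma \ref{lem tree}).

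For (1), assume $E_0$ is adjacent to two $(-2)$-curves $E_1, E_2$ and set $D := 2E_0 + E_1 + E_2$. A direct computation using $E_0^2=-1$, $E_i^2=-2$, and $E_i \cdot E_0 \ge 1$ shows $D \cdot E_i \ge 0$ for $i = 0, 1, 2$, and hence $D^2 \ge 0$. Since $D$ is a nontrivial effective $\Rr$-combination of $f$-exceptional curves, this contradicts the negative-definiteness of the intersection matrix of $f$-exceptional divisors. For (3), set
\[ D := E + \sum_{i=0}^{m} (m+1-i) E_i. \]
The coefficients $m+1-i$ are chosen precisely so that $D \cdot E_j = 0$ for all $0 \le j \le m$: the boundary relation at $E_0$ (using $E \cdot E_0 = 1$, $E_0^2 = -1$, $E_0 \cdot E_1 = 1$), the linear recursion forced by the $(-2)$-chain, and the end condition at $E_m$ all hold by direct verification. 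Consequently $D^2 = D \cdot E = -w + (m+1)$; negative-definiteness forces $D^2 < 0$, giving $m+1 < w$.

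For (2), argue by contradiction: assume $E_0$ is adjacent to three distinct vertices $F_1, F_2, F_3$. Castelnuovo-contract $E_0$, obtaining $g : Y \to Y'$ with $Y'$ smooth and $f' := f \circ g^{-1} : Y' \to X \ni x$ birational. The contraction formula yields
\[ (g_*F_i) \cdot (g_*F_j) = F_i \cdot F_j + (F_i \cdot E_0)(F_j \cdot E_0) \ge 1 \quad \text{for } i \ne j, \]
so $g_*F_1, g_*F_2, g_*F_3$ are pairwise adjacent in the dual graph $\dg'$ of $f'$, producing a $3$-cycle. Under either hypothesis, $X \ni x$ is klt: if $\mld(X\ni x,B) > 0$, then $(X,B)$ is klt and hence so is $X$; if $\mld(X\ni x,B) \ne \pld(X\ni x,B)$ and $X$ were not klt, the classification of lc surface singularities would make $X \ni x$ simple elliptic or a cusp, and the lc condition on the discrepancy-$0$ exceptional divisors of $\widetilde{f}$ would force $B = 0$ near $x$, whence $\mld = \pld = 0$, a contradiction. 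Since $X$ is klt, the minimal resolution has a tree of smooth rational curves as its dual graph, and since $f'$ factors through $\widetilde{f}$, Lemma \ref{lem tree} forces $\dg'$ to be a tree, contradicting the $3$-cycle. The principal obstacle lies precisely in this reduction $\mld \ne \pld \Longrightarrow X$ klt, which rests on the classification of lc surface singularities.
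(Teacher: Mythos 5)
Your proof is correct, and parts (1) and (3) take a genuinely different route from the paper, while part (2) coincides with it. For (1) and (3) the paper runs an iterated Castelnuovo contraction: it contracts $E_0$, observes that the adjacent $(-2)$-curve becomes a $(-1)$-curve while the weight of $E$ drops by one, and repeats; (1) then fails because two adjacent $(-1)$-curves appear, and (3) follows because after $m+1$ contractions the strict transform of $E$ must still have negative self-intersection, giving $w-(m+1)>0$. You instead exhibit an explicit nonzero effective exceptional divisor ($2E_0+E_1+E_2$ in (1), and $E+\sum_{i}(m+1-i)E_i$ in (3)) whose intersection with every exceptional component is nonnegative, and invoke negative definiteness once; this is a clean, purely linear-algebraic repackaging of the same information that avoids tracking self-intersections through a chain of blow-downs, and it is in fact slightly more robust, since the inequalities only improve if extra adjacencies or higher intersection multiplicities are present. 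For (2) your argument is the paper's: contract $E_0$, note that the three former neighbours of $E_0$ become pairwise adjacent (so the new dual graph violates the tree condition), and contradict Lemma \ref{lem tree}; the paper simply cites \cite[Theorem 4.7]{KM98} for the fact that the minimal resolution of $X\ni x$ has a tree of rational curves as dual graph under the hypotheses of (2), whereas you spell out the reduction, namely that the hypotheses force $X\ni x$ to be klt because in the non-klt lc cases the exceptional divisors of log discrepancy zero force $B=0$ near $x$ and hence $\mld(X\ni x,B)=\pld(X\ni x,B)=0$. One small caveat: your list of non-klt lc surface germs should also include the finite quotients of simple elliptic and cusp singularities, but since those too carry an exceptional divisor of log discrepancy zero over the minimal resolution (and already have tree dual graphs), the argument is unaffected.
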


\begin{proof}

For (1), if $E_0$ is adjacent to two $(-2)$-curves $E_{k_1}$ and $E_{k_2}$ in $\dg$, then we may contract $E_0$ and get a smooth model $f':Y'\to X\ni x$ over $X$, whose dual graph contains two adjacent $(-1)$-curves, this contradicts the negativity lemma. 

\medskip

By \cite[Theorem 4.7]{KM98} and the assumptions in (2), the dual graph of the minimal resoltion of $X\ni x$ is a tree. If $E_0$ is a fork, we may contract $E_0$ and get a smooth model $f': Y'\to X\ni x$, whose dual graph contains a cycle, this contradicts Lemma \ref{lem tree}.


\medskip

For (3), we will construct a sequence of contractions of $(-1)$-curve $X_0:=X\to X_1\to\ldots X_m\to X_{m+1}$ inductively. Let $E_{X_k}$ be the strict transform of $E$ on $X_k$, and $w_{X_k}:=-E_{X_k}\cdot E_{X_k}$. For simplicity, we will always denote the strict transform of $E_k$ on $X_j$ by $E_k$ for all $k,j$. Let $f_1: X_0\to X_1$ be the contraction of $E_0$ on $X_0$, then $w_{X_1}=w-1$, and $E_1\cdot E_1=-1$ on $X_1$. Let $f_2: X_1\to X_2$ be the contraction of $E_1$ on $X_1$, then $w_{X_2}=w_{X_1}-1=w-2$, and $E_2\cdot E_2=-1$ on $X_2$. Repeating this procedure, we have $f_k: X_{k-1}\to X_k$ the contraction of $E_{k-1}$, and $w_{X_k}=w-k$, $E_k\cdot E_k=-1$ on $X_k$ for $1\leq k\leq m+1$. By the negativity lemma, $w_{X_{m+1}}=w-(m+1)>0$, and we are done.
\end{proof}

\begin{lem}\label{bound chain by edge $(-1)$-curve}

Let $\gamma\in(0,1]$ be a real number. Let $(X\ni x,B:=\sum_i b_iB_i)$ be an lc surface germ, where $B_i$ are distinct prime divisors. Let $Y$ be a smooth surface and $f: Y\to X\ni x$ a birational morphism with the dual graph $\dg$. Let $\{E_k\}_{0\leq k\leq m}$ be a vertex-induced sub-chain of $\dg$, such that $E_k$ is adjacent to $E_{k+1}$ for $0\leq k\leq m-1$, and let $w_k:=-E_k\cdot E_k$, $a_k:=a(E_k,X,B)$ for all $k$. Suppose that $w_0=1$, and $E_0$ is adjacent to only one vertex $E_1$ of $\dg$, $a_k\leq 1$, and $w_k\geq 2$ for each $k\geq 1$, then

\begin{enumerate}
\item if $\{\sum_i n_ib_i-1>0\mid n_i\in \mathbb{Z}_{\geq 0}\}\subseteq [\gamma,+\infty)$, and $a_0<a_1$, then $m\leq \frac{1}{\gamma}$,

\item if $\sum_i n_ib_i-1\neq 0$ for all $n_i\in \mathbb{Z}_{\geq 0}$, and $a_0\leq a_1$, then $a_0<a_1$, and

\item if $\{\sum_i n_ib_i-1\geq 0\mid n_i\in \mathbb{Z}_{\geq 0}\}\subseteq [\gamma,+\infty)$, and $a_0\leq a_1$, then $m\leq \frac{1}{\gamma}$.
\end{enumerate}

\end{lem}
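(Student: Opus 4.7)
The plan is to exploit the special geometry of the $(-1)$-curve $E_0$: since $E_0$ is adjacent only to $E_1$ among the exceptional divisors and has $E_0^2=-1$, intersecting the crepant equation
$f^{*}(K_X+B)=K_Y+f_{*}^{-1}B+\sum_i(1-a_i)E_i$
with $E_0$ collapses to a single clean identity relating $a_1-a_0$ to an intersection number expressible as $\sum_i n_ib_i-1$. Explicitly, combining adjunction ($K_Y\cdot E_0=-2-E_0^2=-1$, using that $E_0$ is a smooth rational $(-1)$-curve) with the fact that the only exceptional divisor meeting $E_0$ is $E_1$ via $E_0\cdot E_1=1$, I would derive
\[
a_1-a_0 \;=\; f_{*}^{-1}B\cdot E_0-1 \;=\; \sum_i n_ib_i-1,
\]
where $n_i:=(f_{*}^{-1}B_i)\cdot E_0\in\mathbb{Z}_{\geq 0}$, which is automatic because $f_{*}^{-1}B_i$ is not exceptional and therefore meets $E_0$ properly on the smooth surface $Y$.

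From this identity, part (2) is immediate: the hypothesis $a_0\leq a_1$ forces $\sum_i n_ib_i-1\geq 0$, and the assumption that $\sum_i n_ib_i-1$ never vanishes upgrades this to strict inequality, hence $a_0<a_1$. For part (1), the hypothesis $a_0<a_1$ gives $\sum_i n_ib_i-1>0$, hence $\geq\gamma$ by assumption; for part (3), the hypothesis $a_0\leq a_1$ gives $\sum_i n_ib_i-1\geq 0$, hence again $\geq\gamma>0$ (the equality case is automatically excluded by $\gamma>0$). In every case I obtain the baseline bound $a_1-a_0\geq\gamma$.

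The final step is to propagate this bound along the chain and telescope. For each $1\leq k\leq m-1$, the vertex $E_k$ is adjacent on the sub-chain to $E_{k-1}$ and $E_{k+1}$, has weight $w_k\geq 2$, and satisfies $a_k\leq 1$; Lemma \ref{Weight}(2) applied to any two neighbors of $E_k$ therefore yields $2a_k\leq a_{k-1}+a_{k+1}$, i.e., the consecutive differences $a_{k+1}-a_k$ are non-decreasing in $k$. An easy induction starting from $a_1-a_0\geq\gamma$ then gives $a_{k+1}-a_k\geq\gamma$ for all $0\leq k\leq m-1$, and summing these $m$ inequalities yields $a_m\geq a_0+m\gamma\geq m\gamma$, where $a_0\geq 0$ because $(X,B)$ is lc. Combined with the standing hypothesis $a_m\leq 1$, this produces $m\leq 1/\gamma$, completing (1) and (3). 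I do not anticipate any serious obstacle: the entire argument is a clean intersection-theoretic computation on $E_0$ followed by a short telescoping induction driven by Lemma \ref{Weight}(2). The only delicate bookkeeping is verifying the adjunction value $K_Y\cdot E_0=-1$ and the non-negativity and integrality of each $(f_{*}^{-1}B_i)\cdot E_0$, both of which are built into the setup.
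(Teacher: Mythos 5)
Your proposal is correct and follows essentially the same route as the paper: intersect the crepant pullback formula with $E_0$ to get $a_1-a_0=f_*^{-1}B\cdot E_0-1=\sum_i n_ib_i-1$, deduce $a_1-a_0\geq\gamma$ (resp. strict positivity for (2)) from the hypothesis on the coefficient set, and then propagate via Lemma \ref{Weight}(2) and telescope against $a_m\leq 1$. The only cosmetic difference is that you spell out the integrality of $(f_*^{-1}B_i)\cdot E_0$ and the lower bound $a_0\geq 0$, which the paper leaves implicit.
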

$$
\begin{tikzpicture}
         \draw (3.1,0) circle (0.1);
         \node [above] at (3.1,0.1) {\footnotesize$1$};
         \draw (3.2,0)--(3.8,0);
         \draw (3.9,0) circle (0.1);
         \node [above] at (3.9,0.1) {\footnotesize$w_1$};
         \draw (4.0,0)--(4.6,0);
         \draw (4.7,0) circle (0.1);
         \node [above] at (4.7,0.1) {\footnotesize$w_2$};
         \draw [dashed](4.8,0)--(5.4,0);
         \draw (5.5,0) circle (0.1);
         \node [above] at (5.5,0.1) {\footnotesize$w_m$};
\end{tikzpicture}          
$$
\begin{proof}

We may write
$K_Y+f_*^{-1}B+\sum_i (1-a_i)E_i=f^*(K_X+B)$, then
\begin{align}\label{ram}
    -2+f_*^{-1}B\cdot E_0+w_0 a_0+\sum_{i\neq 0} (1-a_i)E_i\cdot E_0=0.
\end{align}
Since $E_0$ is adjacent to only one vertex $E_1$ of $\dg$, by (\ref{ram}) we have
\begin{align*}
a_1-a_0=f_{*}^{-1}B\cdot E_0-1.
\end{align*}

For (1), since $1\ge a_1>a_0$, it follows that
$f_*^{-1}B\cdot E_0-1\in \{\sum_i n_ib_i-1>0\mid n_i\in \mathbb{Z}_{\geq 0}\}\subseteq [\gamma,+\infty)$, Thus $a_1-a_0\geq \gamma$. By Lemma \ref{Weight}(2), we have $a_{i+1}-a_i\geq a_1-a_0\geq \gamma$ for any $0\leq i\leq m-1$, and $1\geq a_m\ge \gamma m$. So $m\leq \frac{1}{\gamma}$.

For (2), since $f_*^{-1}B\cdot E_0-1=\sum_i n_ib_i-1\neq 0$ for some $n_i\in \mathbb{Z}_{\geq 0}$, $a_0<a_1$.

(3) follows immediately from (1) and (2).
\end{proof}

\subsection{Sequence of blow-ups}

\begin{defn}\label{defn: sequence of blow-ups with the data}
Let $X\ni x$ be a smooth surface germ. We say $X_n\to X_{n-1}\to \cdots \to X_1\to X_0:=X$ is a \emph{sequence of blow-ups with the data $(f_i,F_i,x_i\in X_i)$} if 
\begin{itemize}
    \item $f_i: X_i\to X_{i-1}$ is the blow-up of $X_{i-1}$ at a closed point $x_{i-1}\in X_{i-1}$ with the exceptional divisor $F_i$ for any $1\leq i\leq n$, where $x_0:=x$, and
    \item $x_i\in F_i$ for any $1\leq i\leq n-1$.
\end{itemize}
In particular, $F_n$ is the only exceptional $(-1)$-curve over $X$.

 For convenience, we will always denote the strict transform of $F_i$ on $X_j$ by $F_i$ for any
$n\ge j\ge i$.
\end{defn}

The following lemma is well known. For the reader's convenience, we give a proof in details.

 \begin{lem}\label{Terminal}
 Let $(X\ni x,B)$ be an lc surface germ such that $\mld(X\ni x, B)>1$, then $\mld(X\ni x, B)=2-\mult_x B$, and there is exactly one prime divisor $E$ over $X\ni x$ such that $a(E,X,B)=\mld(X\ni x,B)$.
 \end{lem}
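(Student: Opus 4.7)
My plan is to first show that the hypothesis $\mld(X\ni x,B)>1$ forces $X$ to be smooth at $x$, and then to pin down the unique divisor computing the MLD as the exceptional divisor of the blow-up of $x$. Suppose for contradiction that $X$ is singular at $x$, and let $\widetilde{f}:\widetilde{X}\to X\ni x$ be the minimal resolution. By the classification of lc surface singularities, at least one $\widetilde{f}$-exceptional divisor $\widetilde{E}$ satisfies $a(\widetilde{E},X,0)\le 1$; since $B\ge 0$ we have $a(\widetilde{E},X,B)\le a(\widetilde{E},X,0)\le 1$, and because $\widetilde{E}$ is centered at $x$ this yields $\mld(X\ni x,B)\le 1$, a contradiction. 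Hence $X$ is smooth at $x$.

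Let $f_1:X_1\to X$ be the blow-up of $x$ with exceptional divisor $F_1$. Directly, $a_1:=a(F_1,X,B)=2-\mult_x B$, so $\mld(X\ni x,B)\le 2-\mult_x B$, and the hypothesis forces $\mult_x B<1$. To verify that $F_1$ is the only divisor achieving this value, take any prime divisor $E$ over $X\ni x$ distinct from $F_1$ and realize $E=F_n$ as the final exceptional divisor of a sequence of blow-ups $X_n\to\cdots\to X_0=X$ with data $(f_i,F_i,x_i\in X_i)$ as in Definition \ref{defn: sequence of blow-ups with the data}, necessarily with $n\ge 2$. Set $a_i:=a(F_i,X,B)$ and write $B_{X_i}$ for the strict transform of $B$. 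Expanding $g_{n-1}^{*}(K_X+B)$ and applying $f_n^{*}$ gives the standard recursion
\[
a_n=2-\mult_{x_{n-1}}B_{X_{n-1}}+\sum_{k=1}^{n-1}(a_k-1)\,\mult_{x_{n-1}}F_k.
\]
Each $a_k\ge\mld(X\ni x,B)>1$, so every summand in the sum is non-negative; moreover $x_{n-1}\in F_{n-1}$ and $F_{n-1}$ is smooth at $x_{n-1}$, so $\mult_{x_{n-1}}F_{n-1}=1$ contributes at least $a_{n-1}-1>0$. Iterating the elementary inequality $\mult_{x'}D'\le\mult_{x}D$ for the strict transform of an effective divisor under a blow-up at a smooth point on a surface yields $\mult_{x_{n-1}}B_{X_{n-1}}\le\mult_x B$. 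Combining,
\[
a_n\ge (2-\mult_x B)+(a_{n-1}-1)=a_1+(a_{n-1}-1)>a_1,
\]
so $F_1$ is the unique prime divisor over $X\ni x$ computing $\mld(X\ni x,B)=2-\mult_x B$.

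The main technical input is the multiplicity-drop bound $\mult_{x'}D'\le\mult_xD$ for the strict transform of an effective divisor under a blow-up at a smooth point. This is elementary via local coordinates on the blow-up, but applied iteratively along $x_0,x_1,\ldots,x_{n-1}$ it is exactly what allows us to replace $\mult_{x_{n-1}}B_{X_{n-1}}$ by $\mult_x B$ in the recursion; without it the computation would only give $a_n\ge 2-\mult_{x_{n-1}}B_{X_{n-1}}+(a_{n-1}-1)$, which is insufficient to conclude the strict inequality $a_n>a_1$ needed for uniqueness.
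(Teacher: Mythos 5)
Your proof is correct and follows essentially the same route as the paper: reduce to a smooth germ, realize any competing divisor $E$ as the last exceptional divisor of a sequence of point blow-ups, bound $\mult_{x_{n-1}}B_{X_{n-1}}$ by $\mult_x B$, and conclude $a(E,X,B)>a(F_1,X,B)$ from the strictly positive correction term $a_{n-1}-1$. The only cosmetic differences are that you prove smoothness via the minimal resolution rather than citing \cite[Theorem 4.5]{KM98}, and you obtain the multiplicity bound by the local multiplicity-drop fact instead of the projection-formula computation used in the paper.
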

 
 \begin{proof}
 By \cite[Theorem 4.5]{KM98}, $X$ is smooth near $x$. Let $E$ be any prime divisor over $X\ni x$, such that $a(E,X,B)=\mld(X\ni x,B)$. By \cite[Lemma 2.45]{KM98}, there exists a sequence of blow-ups  \begin{align}\label{blow-up sequence}
     X_n\to X_{n-1}\to \cdots\to X_1\to X_0:=X
 \end{align}
 with the data $(f_i,E_i,x_i\in X_i)$, such that $E_n=E$, and $a(E_i,X,B)\geq a(E_n,X,B)=\mld(X\ni x,B)$ for any $1\leq i\leq n-1$.

Let $g_i: X_i\to X$ and $h_i: X_i\to X_1$ be the natural birational morphism induced by (\ref{blow-up sequence}), and $B_{X_i}$ the strict transform of $B$ on $X_i$. 

Since $f_1^*{B}=B_{X_1}+(\mult_x B) E_1$, $0=f_1^*{B}\cdot E_1=(B_{X_1}+(\mult_x B) E_1)\cdot E_1$, which implies $\mult_x B= B_{X_1}\cdot E_1$. By the projection formula, 
$$\mult_x B=B_{X_1}\cdot E_1=B_{X_{n-1}}\cdot h_{n-1}^*E_1 
     \geq B_{X_{n-1}}\cdot E_{n-1} \geq  \mult_{x_{n-1}}B_{X_{n-1}}.$$
Since $a(E_i,X,B)>1$ for any $i\ge 1$, we have $$a(E_{n},X,B)\ge a(E_n,X_{n-1},B_{X_{n-1}})= 2-\mult_{x_{n-1}}B_{X_{n-1}},$$
and $a(E_n,X,B)=2-\mult_{x_{n-1}}B_{X_{n-1}}$ if and only if $n=1$. It follows that
$$a(E_{n},X,B)\ge 2-\mult_{x_{n-1}}B_{X_{n-1}}\ge 2-\mult_{x}B=a(E_1,X,B).$$
Thus $a(E_n,X,B)=2-\mult_{x_{n-1}}B_{X_{n-1}}=a(E_1,X,B)$. This implies that $n=1$, $\mld(X\ni x,B)=a(E_1,X,B)=2-\mult_{x}B$, and $E_1$ is the only prime divisor over $X\ni x$, such that $a(E_1,X,B)=\mld(X\ni x,B)$.
\end{proof}

\begin{lem}\label{blow up and N}
Let $X\ni x$ be a smooth surface germ, and $X_{l_0}\to  \cdots\to X_1\to X_0:=X$ a sequence of blow-ups with the data $(f_i,E_i,x_i\in X_i)$, then $a(E_{l_0},X,0)\leq \mathrm{Fib}(l_0+1)+1$, where $\mathrm{Fib}(i)$ is the $i$-th term of the Fibonacci sequence. In particular, $a(E_{l_0},X,0)\leq 2^{l_0}$.
\end{lem}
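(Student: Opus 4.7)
The plan is to derive a Fibonacci-type recursion on $a_i := a(E_i,X,0)$ and then close by induction on $l_0$. Let $g_i : X_i \to X$ denote the composition $f_1 \circ \cdots \circ f_i$, and for brevity write $E_k$ for the strict transform on whichever model is under consideration. A direct induction on $i$ yields
\begin{equation*}
K_{X_i} = g_i^{*} K_X + \sum_{k=1}^{i}(a_k - 1)\, E_k,
\end{equation*}
and pulling this back under the blow-up $f_{i+1}$ at $x_i$ and reading off the coefficient of the new exceptional $E_{i+1}$ gives the key identity
\begin{equation*}
a_{i+1} \;=\; 2 \;+\; \sum_{\substack{1 \leq k \leq i \\ x_i \in E_k}} (a_k - 1).
\end{equation*}

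Next I would run a simple-normal-crossings argument. By induction on $i$ the reduced divisor $\sum_{k=1}^{i} E_k$ on $X_i$ is SNC: each stage is the blow-up of a smooth point of a smooth surface at which at most two smooth transverse components meet, so SNC is preserved. Consequently at most two $E_k$ can meet at $x_i$, and since $x_i \in E_i$ by hypothesis one of them must be $E_i$ itself. The recursion therefore collapses to
\begin{equation*}
a_{i+1} \;=\; 1 + a_i \quad \text{or} \quad a_{i+1} \;=\; a_i + a_j \text{ for a unique } 1 \leq j \leq i-1.
\end{equation*}
Setting $a_0 := 1$ (legitimate since the recursion also forces $a_{i+1} \geq a_i + 1$, so the sequence is strictly increasing and bounded below by $1$), both cases are subsumed by the uniform estimate $a_{i+1} \leq a_i + a_{i-1}$ for all $i \geq 1$.

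The lemma now follows by a routine induction starting from $a_1 = 2$. Matching initial values against the Fibonacci recursion yields the Fibonacci bound $a_{l_0} \leq \mathrm{Fib}(l_0+1) + 1$; for the coarser in-particular claim, combining monotonicity $a_{i-1} \leq a_i$ with the recursion gives the crude inequality $a_{i+1} \leq 2 a_i$, and then $a_{l_0} \leq 2^{l_0-1} a_1 = 2^{l_0}$. The only real care is needed in the bookkeeping of Step 2, i.e.\ verifying that the SNC property genuinely propagates through the whole blow-up tower and correctly identifying the (at most one) earlier exceptional divisor passing through $x_i$; this is entirely elementary and needs nothing beyond intersection theory on smooth surfaces, so no substantial obstacle arises.
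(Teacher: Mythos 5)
Your derivation of the recursion is correct and in fact more careful than the paper's own argument: the identity $a_{i+1}=2+\sum_{k:\,x_i\in E_k}(a_k-1)$, the SNC induction showing that at most one $j<i$ satisfies $x_i\in E_j$, the resulting dichotomy $a_{i+1}=a_i+1$ or $a_{i+1}=a_i+a_j$, and the consequences $a_{i+1}\le a_i+a_{i-1}$ and $a_{l_0}\le 2^{l_0-1}a_1=2^{l_0}$ are all sound. (The paper runs the same induction but writes the key estimate as $a(E_{k+1},X,0)\le a(E_k,X,0)+\max_{1\le j\le k-1}a(E_j,X,0)-1$; your exact formula shows the ``$-1$'' there is spurious, since when $E_k$ and $E_j$ both pass through $x_k$ one has equality $a_{k+1}=a_k+a_j$.)

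The genuine gap is the sentence asserting that ``matching initial values against the Fibonacci recursion yields $a_{l_0}\le\mathrm{Fib}(l_0+1)+1$.'' It does not: the recursion $a_{i+1}\le a_i+a_{i-1}$ with $a_0=1$, $a_1=2$ propagates the invariant $a_i\le\mathrm{Fib}(i+2)$, not $a_i\le\mathrm{Fib}(i+1)+1$, because if $a_i\le\mathrm{Fib}(i+1)+1$ and $a_{i-1}\le\mathrm{Fib}(i)+1$ the recursion only gives $a_{i+1}\le\mathrm{Fib}(i+2)+2$, so the ``$+1$'' normalization is not preserved. In fact the bound as stated in the lemma already fails at $l_0=3$: blowing up the origin of $\mathbf{A}^2$, then a point of $E_1$, then the node $E_1\cap E_2$ gives $a(E_3,X,0)=a_2+a_1=3+2=5>\mathrm{Fib}(4)+1=4$. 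The correct sharp statement, which your recursion does prove, is $a(E_{l_0},X,0)\le\mathrm{Fib}(l_0+2)=\mathrm{Fib}(l_0+1)+\mathrm{Fib}(l_0)$; since $\mathrm{Fib}(l_0+2)\le 2^{l_0}$, the ``in particular'' conclusion --- the only part of the lemma invoked elsewhere in the paper --- is correctly established by your crude estimate $a_{i+1}\le 2a_i$. So the defect lies in the stated Fibonacci constant (both in your write-up and in the paper's proof), not in your recursion or in the $2^{l_0}$ bound.
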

\begin{proof}
We prove the lemma by induction on $i$ for $1\leq i\leq l_0$. When $l_0=1$, $a(E_1,X,0)=2= \mathrm{Fib}(2)+1$. Suppose that $a(E_i,X,0)\leq \mathrm{Fib}(i+1)+1$ for all $1\leq i\leq k$. For $i=k+1$, there exists at most one $j\neq k$, such that $x_k\in E_j$. By induction,  \begin{align*}a(E_{k+1},X,0)&\leq a(E_{k},X,0)+\max_{1\le j\le k-1}\{a(E_j,X,0)\}-1\\
 &\leq \mathrm{Fib}(k+1)+\mathrm{Fib}(k)+1\\&=\mathrm{Fib}(k+2)+1.
 \end{align*}
The induction is finished, and we are done.
\end{proof}

\subsection{Extracting divisors computing MLDs}

\begin{lem}\label{Terminalization}
Let $(X\ni x,B)$ be an lc surface germ. Let $h: W\to (X,B)$ be a log resolution, and $S=\{E_j\}$ a finite set of valuations of $h$-exceptional prime divisors over $X\ni x$ such that $a(E_j,X,B)\leq 1$ for all $j$. Then there exist a smooth surface $Y$ and a projective birational morphism $f: Y\to X\ni x$ with the following properties.
\begin{enumerate}
     \item $K_Y+B_Y=f^*(K_X+B)$ for some $\mathbb{R}$-divisor $B_Y\ge 0$ on $Y$,
     \item each valuation in $S$ corresponds to some $f$-exceptional divisor on $Y$, and
     \item each $f$-exceptional $(-1)$-curve corresponds to some valuation in $S$.
\end{enumerate}
\end{lem}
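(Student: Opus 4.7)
The plan is to build $Y$ from $W$ purely by a sequence of contractions of $(-1)$-curves; no further blow-ups are needed. After possibly blowing up $W$ further, I may assume every valuation in $S$ is realized by an $h$-exceptional prime divisor. Writing $K_W+B_W=h^*(K_X+B)$, each $h$-exceptional prime $F$ appears in $B_W$ with coefficient $1-a(F,X,B)$; in particular the divisors in $S$ have coefficients in $[0,1]$ by hypothesis. Decompose $B_W=B_W^+-B_W^-$ into effective parts with disjoint support; then $\Supp B_W^-$ consists exactly of the $h$-exceptional primes $F$ with $a(F,X,B)>1$, and is disjoint from $S$.

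\emph{Main step.} I claim that if $B_W^-\neq 0$ then some component of $B_W^-$ is a $(-1)$-curve. Since $h$ is birational onto a normal variety, the intersection matrix of the $h$-exceptional primes is negative definite by the negativity lemma; hence $B_W^-\cdot B_W^-<0$, and some component $E$ of $B_W^-$ satisfies $B_W^-\cdot E<0$. Using $h^*(K_X+B)\cdot E=0$ we get
\[(K_W+B_W^+)\cdot E \;=\; B_W^-\cdot E \;<\; 0.\]
As $E$ is a smooth rational curve on the smooth surface $W$, $K_W\cdot E=-2-E^2$; combined with $B_W^+\cdot E\ge 0$ this forces $E^2>-2$, and being $h$-exceptional $E^2<0$, so $E^2=-1$. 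Contracting such an $E$ via Castelnuovo's criterion produces a smooth surface $W'$ with an induced birational morphism to $X\ni x$. A direct computation using $K_W=p^*K_{W'}+E$, where $p\colon W\to W'$, yields $p^*B_{W'}=B_W+E$, hence $B_{W'}=p_*B_W$; in particular the coefficient of every prime $F\neq E$ is preserved, and $\Supp B_{W'}^-=\Supp B_W^-\setminus\{E\}$ strictly shrinks.

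\emph{Conclusion.} Iterating the main step terminates and produces a smooth model $f_0\colon Y_0\to X\ni x$ with $B_{Y_0}\ge 0$, still containing every valuation of $S$ as an exceptional divisor (divisors in $S$ are never contracted because they never enter $\Supp B_W^-$). Finally, on $Y_0$ I iteratively contract every $f_0$-exceptional $(-1)$-curve that does not lie in $S$; each contracted divisor has nonnegative coefficient in the boundary, so $B_{Y_0}\ge 0$ is preserved, smoothness is preserved by Castelnuovo, divisors in $S$ are untouched, and the number of exceptional divisors strictly drops. The resulting morphism $f\colon Y\to X\ni x$ satisfies (1)--(3).

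The main obstacle is the production of a $(-1)$-curve inside $\Supp B_W^-$ during the main step; this hinges on combining the negativity lemma with the adjunction identity $K_W\cdot E=-2-E^2$ for smooth rational curves, as shown above. Everything else is formal bookkeeping: preservation of coefficients under pushforward along a $(-1)$-contraction, and termination because both loops strictly decrease a finite nonnegative integer.
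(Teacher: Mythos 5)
Your argument is correct and is essentially the paper's proof: the paper writes $K_W+B_W=h^*(K_X+B)+F_W$ with $B_W,F_W\ge 0$ sharing no components (your $B_W^+$ and $B_W^-$), uses negativity to find a component $C$ of $F_W$ with $(K_W+B_W)\cdot C<0$, deduces $C$ is a $(-1)$-curve, contracts until the negative part vanishes, and then runs the same second loop contracting exceptional $(-1)$-curves not in $S$. One cosmetic remark: you should not \emph{assume} $E$ is rational in order to write $K_W\cdot E=-2-E^2$; instead, from $K_W\cdot E<0$ and $E^2<0$ the adjunction formula $2p_a(E)-2=K_W\cdot E+E^2$ already forces $p_a(E)=0$ and $E^2=K_W\cdot E=-1$, so rationality is a conclusion rather than a hypothesis.
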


\begin{proof}

We may write
$$K_W+B_W=h^*(K_X+B)+F_W,$$
where $B_{W}\ge0$ and $F_W\ge0$ are $\Rr$-divisors with no common components. We construct a sequence of $(K_{W}+B_{W})$-MMP over $X$ as follows. Each time we will contract a $(-1)$-curve whose support is contained in $F_W$. Suppose that $K_{W}+B_{W}$ is not nef over $X$, then $F_W\neq 0$. By the negativity lemma, there exists a $h$-exceptional irreducible curve $C\subseteq\Supp F_W$, such that $F_W\cdot C=(K_{W}+B_{W})\cdot C<0$. Since $B_{W}\cdot C\ge0$, $K_{W}\cdot C<0$. Thus $C$ is a $h$-exceptional $(-1)$-curve. We may contract $C$, and get a smooth surface $Y_0:=W\to Y_1$ over $X$. We may continue this process, and finally reach a smooth model $Y_k$ on which $K_{Y_k}+B_{Y_k}$ is nef over $X$, where $B_{Y_k}$ is the strict transform of $B_{W}$ on $Y_k$. By the negativity lemma, $F_W$ is contracted in the MMP, thus $K_{Y_k}+B_{Y_k}=h_k^{*}(K_X+B)$, where $h_k:Y_k\to X$. Since $a(E_j,X,B)\le 1$, $E_j$ is not contracted in the MMP for any $E_j\in S$.

\medskip

We now construct a sequence of smooth models over $X$, $Y_k\to Y_{k+1}\to \cdots$, by contracting a curve $C'$ satisfying the following conditions in each step.
\begin{itemize}
    \item $C'$ is an exceptional $(-1)$-curve over $X$, and
    \item $C'\notin S$.
\end{itemize}
Since each time the Picard number of the variety will drop by one, after finitely many steps, we will reach a smooth model $Y$ over $X$, such that $f:Y\to X$ and $(Y,B_Y)$ satisfy (1)--(3), where $B_Y$ is the strict transform of $B_{Y_k}$ on $Y$.
\end{proof}



We will need Lemma \ref{Resolution} to prove our main results. It maybe well known to experts. Lemma \ref{Resolution}(1)--(4) could be proved by constructing a sequence of blow-ups (c.f. \cite[Lemma 4.3]{CH20}). We give another proof here. 

We remark that Lemma \ref{Resolution}(5) will only be applied to prove Theorem \ref{thm: nak conj dcc not fix germ}. 

\begin{lem}\label{Resolution}
Let $(X\ni x,B)$ be an lc surface germ such that $1\geq \mld(X\ni x,B)\neq \pld(X\ni x,B)$. There exist a smooth surface $Y$ and a projective birational morphism $f: Y\to X$ with the dual graph $\dg$, such that
\begin{enumerate}
 \item $K_Y+B_Y=f^*(K_X+B)$ for some $\mathbb{R}$-divisor $B_Y\ge0$ on $Y$,
     \item there is only one $f$-exceptional divisor $E_0$ such that $a(E_0,X,B)=\mld(X\ni x,B)$,
     \item $E_0$ is the only $(-1)$-curve of $\dg$, and
    \item $\dg$ is a chain.
\end{enumerate}
    
Moreover, if $X\ni x$ is not smooth, let $\widetilde{f}: \widetilde{X}\to X\ni x$ be the minimal resolution of $X\ni x$, and let $g: Y\to \widetilde{X}$ be the morphism such that $\widetilde{f}\circ g=f$, then 
\begin{enumerate}
    \item[(5)] there exist a $\widetilde{f}$-exceptional prime divisor $\widetilde{E}$ on $\widetilde{X}$ and a closed point $\widetilde{x}\in \widetilde{E}$, such that $a(\widetilde{E},X,B)=\pld(X\ni x, B)$, and $\Center_{\widetilde{X}} E=\widetilde{x}$ for all $g$-exceptional divisors $E$.
\end{enumerate}
\end{lem}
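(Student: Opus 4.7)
The plan is to start by choosing a prime divisor $E_0$ over $X\ni x$ with $a(E_0,X,B)=\mld(X\ni x,B)$; the hypothesis $\mld\neq\pld$ forces $\mld<\pld$, so $E_0$ is not among the $\widetilde{f}$-exceptional divisors, and $\widetilde{x}:=\Center_{\widetilde{X}}E_0$ is a closed point of $\widetilde{X}$. I would then take a log resolution $h\colon W\to X$ extracting $E_0$ and apply Lemma~\ref{Terminalization} with $S=\{E_0\}$, augmenting $S$ as necessary with the $\widetilde{f}$-exceptional divisors of discrepancy at most $1$ so that the output $Y$ still dominates $\widetilde{X}$. This produces $f\colon Y\to X$ satisfying (1), extracting $E_0$, with every $f$-exceptional $(-1)$-curve equal to $E_0$, and factoring as $Y\xrightarrow{g}\widetilde{X}\xrightarrow{\widetilde{f}}X$.

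For (3), because $E_0\notin\widetilde{X}$, the smooth model $Y$ is strictly above $\widetilde{X}$, so it cannot have empty $f$-exceptional $(-1)$-locus (otherwise $Y$ would itself be a minimal resolution, forcing $Y=\widetilde{X}$ and contradicting the presence of $E_0$). By Lemma~\ref{Terminalization}(3) the sole such $(-1)$-curve is $E_0$, which proves (3). For (4), I would invoke Lemma~\ref{Classification of dual graphs}(2): since $\mld\neq\pld$, the $(-1)$-curve $E_0$ is not a fork of $\dg$, so $E_0$ sits on a linear segment. Branches of $\dg$ off this segment that do not contain $E_0$ can be removed by iteratively contracting $(-1)$-curves that arise after partial simplifications; each such curve has discrepancy $\leq 1$, so contraction preserves $B_Y\geq 0$, and the final $Y$ keeps property (3). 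For (2), if a second $f$-exceptional divisor $E'$ satisfied $a(E',X,B)=\mld$, then along the chain joining $E_0$ and $E'$ Lemma~\ref{Weight}(6) would force every interior vertex to have discrepancy $\mld$ and weight $2$; by choosing $E_0$ as an endpoint of the maximal such sub-chain and contracting the interior $(-1)$-curves opposite to $E_0$, I leave $E_0$ as the unique $f$-exceptional divisor computing $\mld$.

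For (5), the morphism $g\colon Y\to\widetilde{X}$ is by construction a composition of blow-ups whose centers all lie above $\widetilde{x}$, so every $g$-exceptional divisor has center $\widetilde{x}$ on $\widetilde{X}$. To exhibit $\widetilde{E}$ with $a(\widetilde{E},X,B)=\pld$ and $\widetilde{x}\in\widetilde{E}$, I would argue via the local log discrepancy at $\widetilde{x}$ in the pair $(\widetilde{X},B_{\widetilde{X}}+\sum\widetilde{b}_i\widetilde{E}_i)$: since $E_0$ is a valuation over $\widetilde{X}\ni\widetilde{x}$ with $a(E_0,X,B)=\mld<\pld$, the blow-up recursion $a(F_{i+1},X,B)=2-\mult_{x_i}B_{X_i}$ run backwards from $E_0$ forces the boundary multiplicity at $\widetilde{x}$ to include a contribution from some $\widetilde{f}$-exceptional divisor realizing $\pld$ (otherwise the local multiplicity is too small to reach a discrepancy below $\pld$). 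Choosing $E_0$ within its MLD-class so that its center meets such a $\widetilde{E}$ is the final ingredient.

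The main obstacle is step (4), the chain property, together with (2). The dual graph of the minimal resolution of a general lc surface singularity is a tree but need not be a chain, so achieving a chain $\dg$ on $Y$ demands careful removal (via successive $(-1)$-contractions) of every branch that does not host $E_0$ or any other MLD-computing valuation, while simultaneously keeping the factorization through $\widetilde{X}$ intact. The assumption $\mld\neq\pld$ is decisive here: it rules out the fork configuration at $E_0$ by Lemma~\ref{Classification of dual graphs}(2), and together with Lemma~\ref{Weight} it controls the discrepancies along adjacent vertices tightly enough to guarantee that all superfluous branches can indeed be contracted without destroying (1)--(3) or (5).
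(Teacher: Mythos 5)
Your setup (Lemma \ref{Terminalization} applied to a set of valuations containing an MLD-computing divisor, followed by an analysis of the resulting dual graph) matches the paper's skeleton, and parts (1) and (3) come out essentially correctly. But your arguments for (2) and (4) rest on contraction procedures that cannot be carried out. For (4): since $E_0$ is the only $(-1)$-curve, every curve in a branch of $\dg$ not containing $E_0$ has self-intersection $\le -2$, so there are no $(-1)$-curves there to contract, and contracting a curve of self-intersection $\le -2$ destroys smoothness; indeed, if the dual graph of the minimal resolution has a fork, then \emph{every} smooth crepant model dominating it has a fork, so no sequence of contractions can remove it. The content of (4) is that the hypotheses exclude forks outright: one argues by contradiction that a fork $E'\neq E_0$ would, via Lemma \ref{Weight}(3) and (5), force $a(E',X,B)\le a(E_0,X,B)=\mld(X\ni x,B)$, contradicting (2). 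Similarly for (2): the interior vertices of the path produced by Lemma \ref{Weight}(6) have weight $2$, not $1$, so there are no ``interior $(-1)$-curves opposite to $E_0$'' to contract; moreover Lemma \ref{Weight}(6) requires $\mld(X\ni x,B)>0$, so the case $\mld=0$ is not covered. The uniqueness in (2) has to be built into the construction: the paper repeatedly contracts any exceptional $(-1)$-curve $C'$ for which some \emph{other} exceptional divisor on the current model still computes the MLD --- in particular the MLD-computing $(-1)$-curve itself may be contracted, and the divisor $E_0$ that survives is not fixed in advance. By insisting on never contracting your initially chosen $E_0$, you have no mechanism to eliminate a second MLD-computing divisor of self-intersection $\le -2$ on $Y$.

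Part (5) is also not proved. That all $g$-exceptional divisors lie over a single closed point $\widetilde{x}$ is correct (it follows from the uniqueness of the $(-1)$-curve), but the assertion that $\widetilde{x}$ lies on a $\widetilde{f}$-exceptional divisor attaining $\pld(X\ni x,B)$ is a \emph{global} comparison with divisors possibly far from $\widetilde{x}$; a local multiplicity count at $\widetilde{x}$ cannot see them, and ``choosing $E_0$ so that its center meets such an $\widetilde{E}$'' presupposes exactly what must be shown. The paper's argument uses that $\dg$ is a chain and that the $g$-exceptional divisors form a connected sub-chain $\dg'\subsetneq\dg$, then applies Lemma \ref{Weight}(2) to show that discrepancies are non-decreasing as one moves along the chain away from $\dg'$; hence the minimum over all $\widetilde{f}$-exceptional divisors is attained at one of the at most two vertices adjacent to $\dg'$, whose image on $\widetilde{X}$ contains $\widetilde{x}$.
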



\begin{proof}
By Lemma \ref{Terminalization}, we can find a smooth surface $Y_0$ and a birational morphism $h:Y_0\to X\ni x$, such that $a(E_0',X,B)=\mld(X\ni x, B)$ for some $h$-exceptional divisor $E_0'$, and $K_{Y_0}+B_{Y_0}=h^*(K_X+B)$ for some $B_{Y_0}\ge0$ on $Y_0$.

We now construct a sequence of smooth models over $X$, $Y_0\to Y_{1}\to \cdots$, by contracting a curve $C'$ satisfying the following conditions in each step.
\begin{itemize}
    \item $C'$ is an exceptional $(-1)$-curve over $X$, and
    \item there exists $C''\neq C'$ over $X$, such that $a(C'',X,B)=\mld(X\ni x, B)$.
\end{itemize}
Since each time the Picard number of the variety will drop by one, after finitely many steps, we will reach a smooth model $Y$ over $X$, such that $f:Y\to X$ and $(Y,B_Y)$ satisfy (1), where $B_Y$ is the strict transform of $B_{Y_0}$ on $Y$. Since $\mld(X\ni x,B)\neq \pld(X\ni x,B)$, by the construction of $Y$, there exists a curve $E_0$ on $Y$ satisfying (2)--(3). 

\medskip

For (4), by \cite[Theorem 4.7]{KM98}, the dual graph of the minimal resolution $\widetilde{f}:\widetilde{X}\to X\ni x$ is a tree whose vertices are smooth rational curves. Since $Y$ is smooth, $f$ factors through $\widetilde{f}$. By Lemma \ref{lem tree}, the dual graph $\dg$ of $f$ is a tree whose vertices are smooth rational curves. It suffices to show that there is no fork in $\dg$. By Lemma \ref{Classification of dual graphs}(2), $E_0$ is not a fork. Suppose that $\dg$ contains a fork $E'\neq E_0$, by (3) and (5) of Lemma \ref{Weight}, we have $a(E',X,B)\leq a(E_0,X,B)$, this contradicts (2). Thus $\dg$ is a chain.

\medskip

For (5), since there exists only one $f$-exceptional $(-1)$-curve, there is at most one closed point $\widetilde{x}\in \widetilde{X}$, such that $\Center_{\widetilde{X}} E=\widetilde{x}$ for all $g$-exceptional divisors $E$. Thus the dual graph of $g$, which is denoted by $\dg'$, is a vertex-induced connected sub-chain of $\dg$ by all $g$-exceptional divisors. Since $\mld(X\ni x, B)\neq \pld(X\ni x, B)$, we have $\dg'\subsetneq \dg$.

\begin{figure}[ht]
\begin{tikzpicture}
         \draw (1.5,0) circle (0.1);
         \node [above] at (1.5,0.15)
         {\footnotesize${E_{-n_1}}$};
         \draw [dashed] (1.6,0)--(2.4,0);

         \draw (2.5,0) circle (0.1);
         \node [above] at (2.5,0.1) {\footnotesize$E_{-n_1'-1}$};
         \draw (2.6,0)--(3.4,0);
         \draw (3.5,0) circle (0.1);
         \node [above] at (3.5,0.1) {\footnotesize$E_{-n_1'}$};
         \draw [dashed] (3.6,0)--(4.4,0);
         \draw (4.5,0) circle (0.1);
         \node [above] at (4.5,0.2) {\footnotesize$E_0$};
         \draw (4.5,0) ellipse (1.44 and 0.20);
         \draw [->] (5.5,-0.14)--(6,-0.4);
         \node [below] at (6.8,-0.2)
         {\footnotesize$\text{center at $\widetilde{x}\in \widetilde{X}$}$};
         \node [below] at (4.5,-0.26)
         {\footnotesize$\dg'$};
         \draw [dashed](4.6,0)--(5.4,0);
         \draw (5.5,0) circle (0.1);
         \node [above] at (5.5,0.1) {\footnotesize$E_{n_2'}$};
         \draw (5.6,0)--(6.4,0);
         \draw (6.5,0) circle (0.1);
         \node [above] at (6.5,0.1) {\footnotesize$E_{n_2'+1}$};
         \draw [dashed] (6.6,0)--(7.4,0);
         \draw (7.5,0) circle (0.1);
         \node [above] at (7.5,0.15)
         {\footnotesize${E_{n_2}}$};
\end{tikzpicture}   
 \caption{The dual graph of $f$}
    \label{fig:The dual graph for f mld center}
\end{figure}
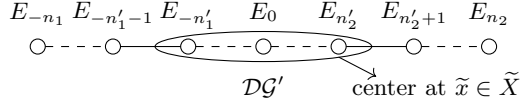

We may index the vertices of $\dg$ as $\{E_i\}_{-n_1\leq i\leq n_2}$ for $n_1,n_2\in \Zz_{\geq 0}$, such that $E_i$ is adjacent to $E_{i+1}$, and $a_i:=a(E_i,X,B)$ for all possible $i$. We may assume that the set of vertices of $\dg '$ is $\{E_j\}_{-n_1'\leq j\leq n_2'}$, where $0\leq n_1'\leq n_1$ and $0\leq n_2'\leq n_2$ (see Figure \ref{fig:The dual graph for f mld center}). If $n_1>n_1'$, then by Lemma \ref{Weight}(2), $a_{k}-a_{-n_1'-1}\geq \min\{0,a_{-1}-a_{0}\}\geq 0$ for all $-n_1\leq k<-n_1' $. If $n_2>n_2'$, then again by Lemma \ref{Weight}(2), $a_{k'}-a_{n_2'+1}\geq \min\{0,a_{1}-a_{0}\}\geq 0$ for all $n_2'< k'\leq n_2$. Set $a_{-n_1-1}=1, E_{-n_1-1}=E_{-n_1}$ if $n_1=n_1'$, and set $a_{n_2+1}=1,E_{n_2+1}=E_{n_2}$ if $n_2=n_2'$. Then $\min\{a_{n_2+1}, a_{-n_1-1}\}=\pld(X\ni x, B)$, and $\widetilde{x}= g(E_{-n_1'-1})\cap g(E_{n_2+1})\in \widetilde{E}$, where $a(\widetilde{E},X,B)=\pld(X\ni x, B)$.
\end{proof} 

The following lemma gives an upper bound for number of vertices of certain kind of $\dg$ constructed in Lemma \ref{Resolution}, with the additional assumption that $\mld(X\ni x, B)$ is bounded from below by a positive real number.

\begin{lem}\label{Bound -1}
Let $\epsilon_0\in(0,1]$ be a real number. Then $N_0':=\lfloor \frac{8}{\epsilon_0}\rfloor$ satisfies the following properties.

Let $(X\ni x,B:=\sum b_iB_i)$ be an lc surface germ such that $\mld(X\ni x,B)\geq \epsilon_0$, where $B_i$ are distinct prime divisors. Let $Y$ be a smooth surface, and $f: Y\to X\ni x$ a birational morphism with the dual graph $\dg$, such that
\begin{itemize}
    \item $K_Y+B_Y=f^*(K_X+B)$ for some $\Rr$-divisor $B_Y\geq 0$ on $Y$,
    \item $\dg$ is a chain with only one $(-1)$-curve $E_0$,
    \item $a(E_0,X,B)=\mld(X\ni x,B)$, and
    \item $E_0$ is adjacent to two vertices of $\dg$.
\end{itemize}
Then the number of vertices of $\dg$ is bounded from above by $N_0'$.
\end{lem}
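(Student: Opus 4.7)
The plan is to index the chain $\dg$ as $E_{-n_1},\ldots,E_{-1},E_0,E_1,\ldots,E_{n_2}$, with $E_0$ the unique $(-1)$-curve and $E_i$ adjacent to $E_{i+1}$. Writing $a_i := a(E_i,X,B)$ and $w_i := -E_i\cdot E_i$, we have $a_0 = \mld(X\ni x, B) \geq \epsilon_0$, $w_0 = 1$, $w_i \geq 2$ for $i \neq 0$, and $n_1, n_2 \geq 1$ by hypothesis. The goal is to show $n_1 + n_2 + 1 \leq N_0' = \lfloor 8/\epsilon_0 \rfloor$.

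First I would set up the basic discrepancy data. The intersection identity at each interior vertex $E_k$ with $w_k \geq 2$ yields $2a_k \leq a_{k-1} + a_{k+1}$, so Lemma~\ref{Weight}(2) shows that the differences $a_{i+1} - a_i$ are non-decreasing in $i$ for $i \geq 1$; since $a_0$ is the mld we have $a_1 - a_0 \geq 0$, so $(a_i)_{i\geq 0}$ is non-decreasing, and by symmetry so is $(a_{-i})_{i\geq 0}$. The identity applied at the endpoint $E_{n_2}$ gives $a_{n_2}(w_{n_2} - 1) \leq 1$, which combined with $w_{n_2} \geq 2$ forces $a_{n_2} \leq 1$, and monotonicity then gives $\epsilon_0 \leq a_i \leq 1$ for every $i$. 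In particular Lemma~\ref{Weight}(1) applies and $w_i \leq 2/a_i \leq 2/\epsilon_0$ for every $i \neq 0$.

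Second I would bound $n_2$. By Lemma~\ref{Classification of dual graphs}(1), $E_0$ cannot be adjacent to two $(-2)$-curves, so without loss of generality $w_1 \geq 3$. Applying Lemma~\ref{Weight}(4) at $E_1$ (neighbors $E_0, E_2$, larger discrepancy $a_2 \geq a_1 \geq \epsilon_0$) gives $a_2 - a_1 \geq \epsilon_0/3$; propagating convexity shows $a_{i+1} - a_i \geq \epsilon_0/3$ for every $i \geq 1$, so $(n_2 - 1)\epsilon_0/3 \leq a_{n_2} - a_1 \leq 1 - \epsilon_0$, yielding $n_2 \leq 3/\epsilon_0 - 2$.

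The main obstacle is bounding $n_1$. If $w_{-1} \geq 3$ the symmetric argument gives $n_1 \leq 3/\epsilon_0 - 2$; the delicate case is $w_{-1} = 2$, in which Lemma~\ref{Weight}(4) cannot be invoked directly at $E_{-1}$. To overcome this I would run a sequence of Castelnuovo contractions: let $j$ be the smallest integer $\geq 1$ with $w_{-j} \geq 3$ (set $j = n_1 + 1$ if no such index exists), so $w_{-k} = 2$ for $1 \leq k \leq j-1$. Then $E_0, E_{-1}, \ldots, E_{-(j-1)}$ can be contracted in succession (each being a $(-1)$-curve at the moment of contraction); since every such curve meets $E_1$ at a single point, $E_1^2$ decreases by $1$ at each step, ending at $-(w_1 - j)$. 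Because $E_1$ remains exceptional over $X$, the negative-definiteness of the exceptional intersection form forces $w_1 - j \geq 1$, hence $j \leq w_1 - 1 \leq 2/\epsilon_0 - 1$. When $j \leq n_1 - 1$, Lemma~\ref{Weight}(4) applies at $E_{-j}$ on the original $Y$ (with $w_{-j} \geq 3$ and larger neighbor $E_{-j-1}$ satisfying $a_{-j-1} \geq a_{-j} \geq \epsilon_0$), yielding $a_{-j-1} - a_{-j} \geq \epsilon_0/3$; propagating convexity then gives $n_1 - j \leq 3/\epsilon_0 - 3$ and so $n_1 \leq 5/\epsilon_0 - 4$, while the boundary cases $j \in \{n_1, n_1 + 1\}$ are handled directly by $n_1 \leq j \leq 2/\epsilon_0 - 1$. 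Summing both sides, $n_1 + n_2 + 1 \leq (5/\epsilon_0 - 4) + (3/\epsilon_0 - 2) + 1 = 8/\epsilon_0 - 5 \leq \lfloor 8/\epsilon_0 \rfloor = N_0'$.
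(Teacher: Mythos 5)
Your argument is correct and is essentially the paper's own proof in mirror image: both use Lemma \ref{Classification of dual graphs}(1) to place a vertex of weight at least $3$ next to $E_0$, Lemma \ref{Weight}(2) and (4) to force the discrepancies along each arm to increase by at least $\epsilon_0/3$ per step beyond such a vertex, Lemma \ref{Weight}(1) to bound that weight by $2/\epsilon_0$, and the Castelnuovo-contraction count (which is exactly Lemma \ref{Classification of dual graphs}(3), so you could simply cite it) to bound the initial run of $(-2)$-curves on the other arm. Your bookkeeping even yields the slightly sharper bound $8/\epsilon_0-5$, which of course still gives $N_0'=\lfloor 8/\epsilon_0\rfloor$.
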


\begin{proof}

Let $\{E_i\}_{-n_1\leq i\leq n_2}$ be the vertices of $\dg$, such that $E_i$ is adjacent to $E_{i+1}$ for $-n_1\leq i\leq n_2-1$, and $w_i:=-(E_i\cdot E_i), a_i:=a(E_i,X,B)$ for all $i$. We may assume that $E_0$ is adjacent to two vertices $E_{-1}, E_1$ of $\dg$.
$$
\begin{tikzpicture}
         \draw (1.5,0) circle (0.1);
         \node [above] at (1.5,0.1)
         {\footnotesize${w_{-n_1}}$};
         \draw [dashed] (1.6,0)--(2.4,0);
         \node [below] at (2,-0.1) {\footnotesize$\leq \frac{3}{\epsilon_0}$};
         \draw (2.5,0) circle (0.1);
         \node [above] at (2.5,0.1) {\footnotesize$w_{-1}$};
         \draw (2.6,0)--(3,0);
         \draw (3.1,0) circle (0.1);
         \node [above] at (3.1,0.1) {\footnotesize$1$};
         \draw (3.2,0)--(3.6,0);
         \draw (3.7,0) circle (0.1);
         \node [above] at (3.7,0.1) {\footnotesize$2$};
         \draw [dashed](3.8,0)--(4.6,0);
         \node [below] at (4.2,-0.1) {\footnotesize$(-2)$\text{-curves}};
         \draw (4.7,0) circle (0.1);
         \node [above] at (4.7,0.1) {\footnotesize$2$};
         \draw (4.8,0)--(5.2,0);
         \draw (5.3,0) circle (0.1);
         \node [above] at (5.3,0.1)
         {\footnotesize${w_{n'+1}}$};
         \draw [dashed] (5.4,0)--(6.2,0);
         \node [below] at (5.8,-0.1) {\footnotesize$\leq\frac{3}{\epsilon_0}$};
         \draw (6.3,0) circle (0.1);
         \node [above] at (6.3,0.1)
         {\footnotesize${w_{n_2}}$};
         
\end{tikzpicture}          
$$
 
By Lemma \ref{Classification of dual graphs}(1), we may assume that $w_{-1}>2$. By (2) and (4) of Lemma \ref{Weight}, $a_{i-1}-a_i\geq \frac{\epsilon_0}{3}$ for any $-n_1+1\leq i\leq -1$, and $a_{-1}\geq \frac{\epsilon_0}{3}$. Since $a_{-n_1}\leq 1$, $n_1\leq \frac{3}{\epsilon_0}$. Similarly, $n_2-n'\le \frac{3}{\epsilon_0}$, where $n'$ is the largest non-negative integer such that $w_i=2$ for any $1\le i\le n'$. By Lemma \ref{Weight}(1), $w_{-1}\leq \frac{2}{\epsilon_0}$, and by Lemma \ref{Classification of dual graphs}(3), $n'< \frac{2}{\epsilon_0}-1$. Hence $n_1+n_2+1$, the number of vertices of $\dg$, is bounded from above by $\frac{8}{\epsilon_0}$.
\end{proof}


\section{Proof of Theorem \ref{nak smooth}}
Lemma \ref{smooth blow-up sequence} is crucial in the proof of Theorem \ref{nak smooth}. Before providing the proof, we introduce some notations first.

\noindent{\bf Notation} $(\star).$ 
Let $X\ni x$ be a smooth surface germ, and let $g: X_n\to X_{n-1}\to \cdots \to X_1\to X_0:=X$ be a sequence of blow-ups with the data $(f_i,F_i,x_i\in X_i)$. Let $\mathcal{DG}$ be the dual graph of $g$, and assume that $\dg$ is a chain. 

Let $n_3\geq 2$ be the largest integer, such that $x_i\in F_{i}\setminus F_{i-1}$ for any $1\leq i\leq n_3-1$, where we set $F_0:=\emptyset$. Let $\{E_j\}_{-n_1\le j\le n_2}$ be the vertices of $\dg$, such that $E_0:=F_n$ is the only $g$-exceptional $(-1)$-curve on $X_n$, $E_{n_2}:=F_1$, and $E_i$ is adjacent to $E_{i+1}$ for any $-n_1\le i\le n_2-1$(see Figure \ref{fig:The dual graph for g}). 

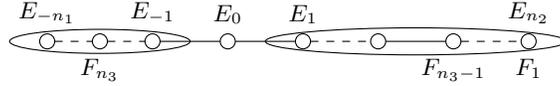
\begin{figure}[ht]
\begin{tikzpicture}
         \draw (1.1,0) circle (0.1);
         \node [above] at (1.1,0.1)
         {\footnotesize${E_{-n_1}}$};
         \draw [dashed] (1.2,0)--(1.7,0);
         \draw (1.8,0) circle (0.1);
         \draw [dashed] (1.9,0)--(2.4,0);
         \node [below] at (1.8,-0.1) {\footnotesize$F_{n_3}$};
         \draw (1.8,0) ellipse (1.2 and 0.16);
         \draw (2.5,0) circle (0.1);
         \node [above] at (2.5,0.1) {\footnotesize$E_{-1}$};
         \draw (2.6,0)--(3.4,0);
         \draw (3.5,0) circle (0.1);
         \node [above] at (3.5,0.1) {\footnotesize$E_0$};
         \draw (3.6,0)--(4.4,0);
         \draw (4.5,0) circle (0.1);
         \node [above] at (4.5,0.1) {\footnotesize$E_1$};
         \draw [dashed](4.6,0)--(5.4,0);

         \draw (5.5,0) circle (0.1);
         \draw (5.6,0)--(6.4,0);
         \draw (6,0) ellipse (2 and 0.18);
         \draw (6.5,0) circle (0.1);
         \draw [dashed] (6.6,0)--(7.4,0);
         \node [below] at (6.5,-0.1) {\footnotesize$F_{n_3-1}$};
         \draw (7.5,0) circle (0.1);
         \node [above] at (7.5,0.1)
         {\footnotesize${E_{n_2}}$};
         \node [below] at (7.5,-0.1) {\footnotesize$F_1$};
\end{tikzpicture}   
 \caption{The dual graph of $g$}
    \label{fig:The dual graph for g}
\end{figure}

We define $n_i(g):=n_i$ for $1\le i\le 3$, $n(g)=n$, $w_j(g):=-E_j\cdot E_j$ for all $j$, and $W_1(g):=\sum_{j<0}w_j(g)$ and $W_2(g):=\sum_{j>0}w_j(g)$.

\begin{lem}\label{smooth blow-up sequence} With {\bf Notation} $(\star)$. Then \begin{align}\label{equation of weight and vertices}
(W_1(g)-n_1(g))+n_3(g)-1=W_2(g)-n_2(g).
\end{align}

In particular, $n(g)=n_1(g)+n_2(g)+1\leq n_3(g)+\min\{W_1(g),W_2(g)\}$.
\end{lem}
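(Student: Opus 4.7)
The plan is to prove the identity (\ref{equation of weight and vertices}) by induction on the stage of the blow-up sequence, and then to derive the inequality from it using the fact that $w_j(g)\ge 2$ for $j\ne 0$.

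For $2\le k\le n$, let $\dg_k$ denote the dual graph on $X_k$. Since $\dg=\dg_n$ is a chain and blow-ups only add vertices and split (rather than remove) edges, every $\dg_k$ is a chain; moreover $F_k$ is always its unique $(-1)$-curve, since in the analysis below each Case~II blow-up turns the previous $(-1)$-curve into a $(-2)$-curve. Let $P_k\subseteq\dg_k\setminus\{F_k\}$ be the connected component containing $F_1$ and $N_k$ the other, and set $D_+^{(k)}:=\sum_{v\in P_k}(w(v)-1)$ and $D_-^{(k)}:=\sum_{v\in N_k}(w(v)-1)$, where $w(v)$ denotes the weight of $v$ in $\dg_k$. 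Then $D_+^{(n)}=W_2(g)-n_2(g)$ and $D_-^{(n)}=W_1(g)-n_1(g)$, so it suffices to show $D_+^{(n)}-D_-^{(n)}=n_3(g)-1$.

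I analyse each blow-up in two cases:
\begin{itemize}
\item \emph{Case~I} (interior): $x_{k-1}$ lies on $F_{k-1}$ only. For $\dg_k$ to remain a chain, $F_{k-1}$ must have degree $\le 1$ in $\dg_{k-1}$, so $F_k$ becomes a new leaf and all earlier vertices lie on its positive side; a direct computation gives $D_+^{(k)}-D_-^{(k)}=(D_+^{(k-1)}-D_-^{(k-1)})+1$.
\item \emph{Case~II} (intersection): $x_{k-1}\in F_{k-1}\cap F_j$ for a neighbor $F_j$ of $F_{k-1}$, so $F_k$ is inserted between them and both $F_{k-1}$ and $F_j$ gain one unit of weight. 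In each subcase (depending on which side of $F_k$ contains $F_1$) one obtains $D_+^{(k)}-D_-^{(k)}=D_+^{(k-1)}-D_-^{(k-1)}$.
\end{itemize}
I then observe that Case~I at step $k$ forces $F_{k-1}$ to have degree $\le 1$, and hence forces step $k-1$ also to be Case~I (a Case~II blow-up always produces a degree-$2$ vertex). So Case~I occurs on an initial segment of steps, which by the maximality in the definition of $n_3(g)$ is exactly $k=2,\ldots,n_3(g)$. Since $D_+^{(2)}-D_-^{(2)}=1$ (at stage $2$, $w(F_1)=2$, $w(F_2)=1$, and $N_2=\emptyset$), applying Case~I a further $n_3(g)-2$ times gives $D_+^{(n_3(g))}-D_-^{(n_3(g))}=n_3(g)-1$, and Case~II preserves this value through stage $n$, proving (\ref{equation of weight and vertices}).

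For the inequality, since $E_0$ is the only $(-1)$-curve of $\dg$, each $w_j(g)\ge 2$ for $j\ne 0$, so $W_i(g)\ge 2n_i(g)$ for $i=1,2$. Eliminating $n_1(g)$ from (\ref{equation of weight and vertices}) gives $n(g)-n_3(g)-W_1(g)=2n_2(g)-W_2(g)\le 0$, and eliminating $n_2(g)$ gives $n(g)-n_3(g)-W_2(g)=2n_1(g)-W_1(g)-2(n_3(g)-1)\le 0$ (the latter also using $n_3(g)\ge 2$). Together these yield $n(g)\le n_3(g)+\min\{W_1(g),W_2(g)\}$. The main technical difficulty lies in the Case~II bookkeeping: one must carefully track both weight updates and which of the two resulting sides of $F_k$ retains $F_1$, so that the cancellation producing $D_+^{(k)}-D_-^{(k)}=D_+^{(k-1)}-D_-^{(k-1)}$ works uniformly in both subcases.
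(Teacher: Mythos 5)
Your proof is correct and is essentially the paper's argument run in the opposite direction: the paper inducts on $n-n_3$ by contracting the last $(-1)$-curve and checking that $(W_2-n_2)-(W_1-n_1)$ is unchanged (via the dichotomy $w_{-1}=2$ or $w_1=2$, which matches your two Case~II subcases), while you induct forwards on the blow-up stage with the same free/satellite case split and the same bookkeeping of $W_i-n_i$. The derivation of the final inequality from the identity is also the same.
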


\begin{proof}
For simplicity, let $n:=n(g)$, $n_i:=n_i(g)$ for $1\le i\le 3$, $w_j:=w_j(g)=-E_j\cdot E_j$ for all $j$, and $W_j:=W_j(g)$ for $j=1,2$.

We prove (\ref{equation of weight and vertices}) by induction on the non-negative integer $n-n_3$.

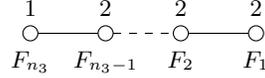
\begin{figure}[ht]
\begin{tikzpicture}
         \draw (4.3,0) circle (0.1);
         \node [above] at (4.3,0.1) {\footnotesize$1$};
         \node [below] at (4.3,-0.1) {\footnotesize$F_{n_3}$};
         \draw (4.4,0)--(5.2,0);
         \draw (5.3,0) circle (0.1);
         \node [above] at (5.3,0.1) {\footnotesize$2$};
         \node [below] at (5.3,-0.1) {\footnotesize$F_{n_3-1}$};
         \draw [dashed] (5.4,0)--(6.2,0);
         \draw (6.3,0) circle (0.1);
         \node [above] at (6.3,0.1)
         {\footnotesize${2}$};
         \node [below] at (6.3,-0.1) {\footnotesize$F_2$};
         \draw  (6.4,0)--(7.2,0);
         \draw (7.3,0) circle (0.1);
         \node [above] at (7.3,0.1)
         {\footnotesize${2}$};
         \node [below] at (7.3,-0.1) {\footnotesize$F_1$};
\end{tikzpicture}  
 \caption{The dual graph for the case $n=n_3$.}
    \label{fig:The dual graph for the case $n=n_3$}
\end{figure}

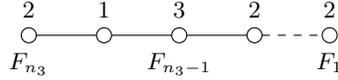
\begin{figure}[ht]
\begin{tikzpicture}
         \draw (4.3,0) circle (0.1);
         \node [above] at (4.3,0.1) {\footnotesize$2$};
         \node [below] at (4.3,-0.1) {\footnotesize$F_{n_3}$};
         \draw (4.4,0)--(5.2,0);
         \draw (5.3,0) circle (0.1);
         \node [above] at (5.3,0.1) {\footnotesize$1$};
         \node [below] at (6.3,-0.1) {\footnotesize$F_{n_3-1}$};
         \draw (5.4,0)--(6.2,0);
         \draw (6.3,0) circle (0.1);
         \node [above] at (6.3,0.1)
         {\footnotesize${3}$};
         \draw (6.4,0)--(7.2,0);
         \draw (7.3,0) circle (0.1);
         \node [above] at (7.3,0.1)
         {\footnotesize${2}$};
         \draw [dashed] (7.4,0)--(8.2,0);
         \draw (8.3,0) circle (0.1);
         \node [above] at (8.3,0.1)
         {\footnotesize${2}$};
         \node [below] at (8.3,-0.1) {\footnotesize$F_1$};
\end{tikzpicture}  
 \caption{The dual graph for the case $n=n_3+1$.}
    \label{fig:The dual graph for the case $n=n_3+1$}
\end{figure}

If $n=n_3$, then $n_1=W_1=0$, $n_2=n_3-1$, and $W_2=2n_3-2$, thus (\ref{equation of weight and vertices}) holds (see Figure \ref{fig:The dual graph for the case $n=n_3$}). If $n=n_3+1$, then $x_{n_3}\in F_{n_3}\cap F_{n_3-1}$. In this case, $n_1=1$, $W_1=2$, $n_2=n_3-1$, and $W_2=2n_3-1$, thus (\ref{equation of weight and vertices}) holds (see Figure \ref{fig:The dual graph for the case $n=n_3+1$}).


In general, suppose (\ref{equation of weight and vertices}) holds for any sequence of blow-ups $g$ as in {\bf Notation} $(\star)$ with positive integers $n,n_3$ satisfying $1\leq n-n_3\leq k$. For the case when $n-n_3=k+1$, we may contract the $(-1)$-curve on $X_n$, and consider $g':X_{n-1}\to\cdots \to X_0:= X$, a subsequence of blow-ups of $g$ with the data $(f_i,F_i,x_i\in X_i)$ for $0\leq i\leq n-1$. Denote $n_i':=n_i(g')$ for any $1\le i\le 3$, and $W_j':=W_j(g')$ for any $1\le j\le 2$. By Lemma \ref{Classification of dual graphs}(1), either $w_{-1}=2$ or $w_1$=2. In the former case, $W_1'=W_1-2, W_2'=W_2-1, n_1'=n_1-1$, $n_2'=n_2$, and $n_3'=n_3$. In the latter case, $W_1'=W_1-1, W_2'=W_2-2, n_1'=n_1,n_2'=n_2-1$, and $n_3'=n_3$. In both cases, by induction, $$W_2'-n_2'-(W_1'-n_1')=(W_2-n_2)-(W_1-n_1)=n_3-1.$$
Hence we finish the induction, and (\ref{equation of weight and vertices}) is proved.

Since $w_j\geq 2$ for $j\neq 0$, we have $W_1=\sum_{-n_1\leq j\leq -1} w_j\geq 2n_1$ and $W_2=\sum_{1\leq j\leq n_2} w_j\geq 2n_2$. By (\ref{equation of weight and vertices}), $$n_1+n_2+1\leq n_1+W_2-n_2+1=W_1+n_3,$$ and $$n_1+n_2+1\leq W_1-n_1+n_2+n_3-1=W_2,$$
which imply that $n=n_1+n_2+1\le n_3+\min\{W_1,W_2\}$.
\end{proof}

We will need Lemma \ref{Bound $(-1)$-curve adjacent to two vertices} to prove both Theorem \ref{nak smooth} and Theorem \ref{generalized nak smooth}.

\begin{lem}\label{Bound $(-1)$-curve adjacent to two vertices}

Let $\gamma\in(0,1]$ be a real number. Let $N_0:= \lfloor 1+\frac{32}{\gamma^2}+\frac{1}{\gamma}\rfloor,$ then we have the following. 

Let $(X\ni x,B:=\sum_i b_iB_i)$ be an lc surface germ, such that $X\ni x$ is smooth, and $B_i$ are distinct prime divisors. Suppose that $\{\sum_i n_ib_i-1> 0\mid n_i\in \mathbb{Z}_{\geq 0}\}\subseteq [\gamma,+\infty)$. 
 Let $Y$ be a smooth surface and $f: Y\to X\ni x$ be a birational morphism with the dual graph $\dg$, such that
\begin{itemize}
    \item $K_Y+B_Y=f^*(K_X+B)$ for some $B_Y\geq 0$ on $Y$,
    \item $\dg$ is a chain that contains only one $(-1)$-curve $E_0$,
    \item $E_0$ is adjacent to two vertices of $\dg$, and
    \item either $E_0$ is the only vertex of $\dg$ such that $a(E_0,X,B)=\mld(X\ni x,B)$, or $a(E_0,X,B)=\mld(X\ni x, B)>0$ and $\sum_i n_i b_i\neq 1$ for all $n_i\in\Zz_{\geq 0}$.

\end{itemize}
Then the number of vertices of $\dg$ is bounded from above by $N_0$.
\end{lem}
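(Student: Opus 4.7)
The plan is to follow the sketch of Theorem \ref{nak smooth} in Section 2, specialized to the two-neighbor configuration. Index the vertices of $\dg$ as $\{E_i\}_{-n_1\le i\le n_2}$ so that $E_0$ is the unique $(-1)$-curve, adjacent to $E_{-1}$ and $E_1$, and set $w_i:=-E_i\cdot E_i$ and $a_i:=a(E_i,X,B)$. Since $B_Y\ge 0$, all $a_i\le 1$, and since $E_0$ is the only $(-1)$-curve in the chain, $w_i\ge 2$ for $i\ne 0$. Intersecting $K_Y+B_Y=f^*(K_X+B)$ with the smooth rational $(-1)$-curve $E_0$ yields the key identity
\[
    (a_{-1}-a_0)+(a_1-a_0) \;=\; f_*^{-1}B\cdot E_0 - a_0.
\]
I would split on the size of $a_0=\mld(X\ni x,B)$: if $a_0\ge\gamma/2$, Lemma \ref{Bound -1} with $\epsilon_0=\gamma/2$ directly bounds the number of vertices by $\lfloor 16/\gamma\rfloor\le N_0$.

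Assume now $a_0<\gamma/2$. In either disjunct of the hypothesis we have $a_{-1}+a_1>a_0$: the first case by uniqueness (which gives $a_{\pm 1}>a_0$), the second by $a_{\pm 1}\ge a_0>0$. Hence $f_*^{-1}B\cdot E_0=a_{-1}+a_1-a_0>0$, and since by Lemma \ref{lower bound for gamma} every nonzero $b_i$ lies in $[\gamma,1]$, the nonnegative integer combination $f_*^{-1}B\cdot E_0=\sum_i m_i b_i$ satisfies $f_*^{-1}B\cdot E_0\ge\gamma$. Therefore $(a_{-1}-a_0)+(a_1-a_0)\ge\gamma-a_0>\gamma/2$, and after possibly swapping the two sides of the chain I may assume $a_{-1}-a_0\ge\gamma/4$. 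Propagating the convexity $2a_k\le a_{k-1}+a_{k+1}$ of Lemma \ref{Weight}(2) along the left half gives $a_{-(j+1)}-a_{-j}\ge\gamma/4$ for $0\le j\le n_1-1$; telescoping together with $a_{-n_1}\le 1$ yields $n_1\le 4/\gamma$, and Lemma \ref{Weight}(1) then gives $w_{-j}\le 2/a_{-j}\le 8/\gamma$, whence $W_1(g)\le 32/\gamma^2$.

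For the tower parameter, set $a_i':=a(F_i,X,B)$ with the convention $a_0':=1$. A direct blow-up computation using $x_i\in F_i\setminus F_{i-1}$ gives $a_i'-a_{i+1}'=\mult_{x_i}B_{X_i}-1$ for $0\le i\le n_3-1$. The crucial geometric bound is
\[
    \mult_{x_i}B_{X_i}-1 \;\ge\; \min\{a_1-a_0,\, a_{-1}-a_0\} \;>\; 0 \quad\text{for } 0\le i\le n_3-2,
\]
which combined with the gap condition applied to $\mult_{x_i}B_{X_i}-1=\sum_j m_{ij}b_j-1>0$ yields $a_i'-a_{i+1}'\ge\gamma$. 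Summing from $i=0$ to $n_3-2$ and using $a_{n_3-1}'\ge 0$ (from lc) gives $1=a_0'\ge(n_3-1)\gamma$, so $n_3\le 1+1/\gamma$. Finally, Lemma \ref{smooth blow-up sequence} delivers $n(g)=n_1+n_2+1\le n_3(g)+\min\{W_1(g),W_2(g)\}\le 1+1/\gamma+32/\gamma^2\le N_0$.

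The main obstacle is the tower inequality $\mult_{x_i}B_{X_i}-1\ge\min\{a_1-a_0,a_{-1}-a_0\}$ for $0\le i\le n_3-2$. I anticipate proving it by relating the simple sub-chain $F_{n_3}-F_{n_3-1}-\cdots-F_1$ on $X_{n_3}$, where $F_{n_3}$ is a $(-1)$-curve adjacent only to $F_{n_3-1}$, to its image inside the full chain $\dg$ on $X_n$: the blow-ups from $X_{n_3}$ to $X_n$ insert new vertices (in particular $E_0$) between consecutive $F_i$, and propagating the convexity of Lemma \ref{Weight}(2) along these enlarged chains, together with the $E_0$-identity, should force the multiplicity at each tower point to exceed $1$ by at least the smaller of the two discrepancy increments at $E_0$.
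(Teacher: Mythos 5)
Your proposal follows essentially the same route as the paper's proof: the same dichotomy on $\mld(X\ni x,B)$ versus $\gamma/2$ (with Lemma \ref{Bound -1} handling the first case), the same identity at $E_0$ giving $f_*^{-1}B\cdot E_0-a_0\ge\gamma/2$, the same bounds $n_1\le 4/\gamma$ and $W_1\le 32/\gamma^2$ via Lemma \ref{Weight}(1)--(2), the same bound $n_3\le 1+1/\gamma$ from the tower of multiplicities, and the same final appeal to Lemma \ref{smooth blow-up sequence}. The one step you leave as ``anticipated'' --- that $\mult_{x_i}B_{X_i}-1\ge\min\{a_1-a_0,a_{-1}-a_0\}$ --- is exactly what the paper also disposes of by invoking Lemma \ref{Weight}(2) along the chain, so there is no substantive difference.
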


\begin{proof}
By Lemma \ref{lower bound for gamma}, $b_i\geq\gamma$ for all $i$. 

If $\mld(X\ni x, B)\geq \frac{\gamma}{2}$, then by Lemma \ref{Bound -1} with $\epsilon_0=\frac{\gamma}{2}$, the number of vertices of $\dg$ is bounded from above by $\frac{16}{\gamma}$.

Thus we may assume that $0\leq \mld(X\ni x,B)\leq\frac{\gamma}{2}$. We may index the vertices of $\dg$ as $\{E_j\}_{-n_1\leq j\leq n_2}$ for some positive integer $n_1,n_2$, where $E_j$ is adjacent to $E_{j+1}$ for $-n_1\leq j\leq n_2-1$. Let $w_j:=-E_j\cdot E_j$ and $a_j:=a(E_j,X\ni x, B)$ for all $j$.

For all $-n_1\leq k\leq n_2$, we have
\begin{align}\label{equation nakamura}
    (K_Y+f^{-1}_* B+\sum_j (1-a_j)E_j)\cdot E_k=f^*(K_X+B)\cdot E_k=0.
\end{align}
Let $k=0$, (\ref{equation nakamura}) becomes
$0=-2+f^{-1}_* B\cdot E_0+(1-a_{-1})+(1-a_1)+w_0a_0$, thus$$(a_1-a_0)+(a_{-1}-a_0)=f^{-1}_* B\cdot E_0-a_0.$$

By the last assumption in the lemma, either $(a_{-1}-a_0)+(a_1-a_0)>0$ or $a_0>0$, thus $f_*^{-1}B\cdot E_0>0$ in both cases. Hence $f_*^{-1}B\cdot E_0-a_0\geq \gamma-\frac{\gamma}{2}=\frac{\gamma}{2}$. Possibly switching $E_j$ $(j<0)$ with $E_j$ $(j>0)$, we may assume that $a_{-1}-a_0\geq \frac{\gamma}{4}$.

By Lemma \ref{Weight}(2), $a_{-j}-a_{-j+1}\geq a_{-1}-a_0\geq \frac{\gamma}{4}$ for $1\leq j\leq n_1$, thus $n_1\cdot \frac{\gamma}{4}\leq a_{-n_1}\leq 1$, and $n_1\leq \frac{4}{\gamma}$. Since $a_j\geq \frac{\gamma}{4}$ for all $-n_1\leq j\leq -1$, by Lemma \ref{Weight}(1), $w_j\leq \frac{8}{\gamma}$ for all $-n_1\leq j\leq -1$. Thus $\sum_{j=-1}^{-n_1} w_j\leq n_1\cdot\frac{8}{\gamma}\leq \frac{32}{\gamma^2}$. Note that $X\ni x$ is smooth and $\dg$ has only one $(-1)$-curve, thus $f: Y\to X$ is a sequence of blow-ups as in Definition \ref{defn: sequence of blow-ups with the data}. Moreover, $\dg$ is a chain, thus by Lemma \ref{smooth blow-up sequence}, $1+n_1+n_2\le n_3+\frac{32}{\gamma^2}$, where $n_3=n_3(f)$ is defined as in {\bf Notation} $(\star)$. 

It suffices to show that $n_3$ is bounded, we may assume that $n_3>2$. By the definition of $n_3$, there exists a sequence of blow-ups $X_{n_3}\to \ldots X_1\to X_0:=X$ with the data $(f_i,F_i,x_i\in X_i)$, such that $x_i\in F_i\setminus F_{i-1}$ for any $1\leq i\leq n_3-1$. Here $F_0:=\emptyset$.

 Let $B_{X_i}$ be the strict transform of $B$ on $X_i$ for $0\leq i\leq n_3$, and let $a_i':=a(F_i,X,B)$ for $1\leq i\leq n_3$, and $a_0':=1$. Since $x_i\in F_i\setminus F_{i-1}$,  $a_{i}'-a_{i+1}'=\mathrm{mult}_{x_i}B_{X_i}-1$ for any $n_3-1\ge i\ge0$. By Lemma \ref{Weight}(2), $a_{i}'-a_{i+1}'\geq \min\{a_1-a_0, a_{-1}-a_0\}\geq 0$ for $1\leq i\leq n_3-2$ (see Figure \ref{fig:The dual graph for g}). Thus by the last assumption in the lemma, either $\min\{a_1-a_0, a_{-1}-a_0\}>0$, or $\mathrm{mult}_{x_i}B_{X_i}-1>0$, in both cases we have $a_{i}'-a_{i+1}'=\mathrm{mult}_{x_i}B_{X_i}-1>0$. Hence $a_{i}'-a_{i+1}'=\mathrm{mult}_{x_i}B_{X_i}-1\geq \gamma$ for any $1\leq i\leq n_3-2$ as $\{\sum_i n_ib_i-1> 0\mid n_i\in \mathbb{Z}_{\geq 0}\}\subseteq [\gamma,+\infty)$. Therefore,
$$0\leq a_{n_3-1}'=a_0'+\sum_{i=0}^{n_3-2}(a_{i+1}'-a_i')\leq 1-(n_3-1)\gamma,$$
and $n_3\leq 1+\frac{1}{\gamma}$.

To sum up, the number of vertices of $\dg$ is bounded from above by $\lfloor 1+\frac{32}{\gamma^2}+\frac{1}{\gamma}\rfloor$.
\end{proof}

Now we are ready to prove Theorem \ref{nak smooth}. 

\begin{proof}[Proof of Theorem \ref{nak smooth}]

By Lemma \ref{Terminal}, we may assume that $\mld(X\ni x, B)\leq 1$.

Let $f: Y\to X\ni x$ be the birational morphism constructed in Lemma \ref{Resolution} with the dual graph $\dg$. We claim that the number of vertices of $\dg$ is bounded from above by $N_0:= \lfloor 1+\frac{32}{\gamma^2}+\frac{1}{\gamma}\rfloor$.

Assume the claim holds, then by Lemma \ref{blow up and N}, $a(E,X,0)\leq 2^{N_0}$ for some exceptional divisor $E$ such that $a(E,X,B)=\mld(X\ni X,B)$, we are done. It suffices to show the claim.

\medskip

If the $f$-exceptional $(-1)$-curve is adjacent to only one vertex of $\dg$, then by Lemma \ref{bound chain by edge $(-1)$-curve}(1), the number of vertices of $\dg$ is bounded from above by $ 1+\frac{1}{\gamma}$.

If the $f$-exceptional $(-1)$-curve is adjacent to two vertices of $\dg$, then by Lemma \ref{Bound $(-1)$-curve adjacent to two vertices}, the number of vertices of $\dg$ is bounded from above by $\lfloor 1+\frac{32}{\gamma^2}+\frac{1}{\gamma}\rfloor$. Thus we finish the proof.
\end{proof}

To end this section, we provide an example which shows that Theorem \ref{nak smooth} does not hold if we do not assume that $\{\sum_i n_ib_i-1>0\mid n_i\in \Zz_{\geq 0}\}$ is bounded from below by a positive real number.

\begin{ex}\label{ex: generealized nak not fix germ}
Let $\{(\mathbf{A}^2\ni 0, B_k)\}_{k\geq 2}$ be a sequence of klt surface germs, such that $B_k:=\frac{1}{2}B_{k,1}+(\frac{1}{2}+\frac{1}{k+1})B_{k,2}$, and $B_{k,1}$ (respectively $B_{k,2}$) is defined by the equation $x=0$ (respectively $x-y^k=0$) at $0\in \mathbf{A}^2$.

For each $k$, we may construct a sequence of blow-ups $X_{k}\to X_{k-1}\to \cdots\to X_{1}\to X_{0}:=X$ with the data $(f_i,F_i,x_i\in X_i)$, such that $x_{i-1}\in F_{i-1}$ is the intersection of the strict transforms of $B_{k,1}$ and $B_{k,2}$ on $F_{i-1}$ for $1\le i\le k$. Let $g_k: X_k\to X$ be the natural morphism induced by $\{f_i\}_{1\leq i\leq k}$, we have $K_{X_k}+B_{X_k}=g_k^*(K_X+B_k)$ for some snc divisor $B_{X_k}\geq 0$ on $X_k$, and the coefficients of $B_{X_k}$ are no more than $\frac{k}{k+1}$. Thus we will need at least $k$ blow-ups as constructed above to extract an exceptional divisor $F_k$ such that $a(F_k,\mathbf{A}^2,B_k)=\mld(\mathbf{A}^2\ni 0, B_k)=\frac{1}{k+1}$, and $a(F_k, \mathbf{A}^2,0)\geq k$.
\end{ex} 

\section{Proof of Theorem \ref{thm: nak conj dcc not fix germ}}

\begin{proof}[Proof of Theorem \ref{thm: nak conj dcc not fix germ}]
We may assume that $\Ii\setminus \{0\}\neq \emptyset$.

Let $(X\ni x, B)$ be an lc surface germ with $B\in \Ii$. By Lemma \ref{Terminal}, we may assume that $\mld(X\ni x, B)\leq 1$. By Theorem~\ref{nak smooth}, it suffices to show the case when $X\ni x$ is not smooth.

If $\mld(X\ni x, B)=\pld(X\ni x, B)$, then $a(E,X,0)\leq 1$ for some prime divisor $E$ over $X\ni x$ such that $a(E,X,B)=\mld(X\ni x, B)$. So we may assume that $\mld(X\ni x, B)\neq \pld(X\ni x, B)$.

By Lemma \ref{Resolution}, there exists a birational morphism $f: Y\to X\ni x$ which satisfies Lemma \ref{Resolution}(1)--(5). Let $\widetilde{f}:\widetilde{X}\to X$ be the minimal resolution of $X\ni x$, $g: Y\to \widetilde{X}\ni \widetilde{x}$ the birational morphism such that $\widetilde{f}\circ g=f$, where $\widetilde{x}\in \widetilde{X}$ is chosen as in Lemma \ref{Resolution}(5), and there exists a $\widetilde{f}$-exceptional prime divisor $\widetilde{E}$ over $X\ni x$ such that $a(\widetilde{E},X,B)=\pld(X\ni x, B)$ and $\widetilde{x}\in \widetilde{E}$. Moreover, there is at most one other vertex $\widetilde{E'}$ of $\widetilde{\dg}$ such that $\widetilde{x}\in \widetilde{E'}$.

Let $\widetilde{\dg}$ be the dual graph of $\widetilde{f}$, and $\{F_i\}_{-n_1\leq i\leq n_2}$ the vertices of $\widetilde{\dg}$, such that $n_1,n_2\in\Zz_{\geq 0}$, $F_i$ is adjacent to $F_{i+1}$, $w_i:=-F_i\cdot F_i, a_i:=a(F_i,X,B)$ for all $i$, and $F_0:=\widetilde{E}, F_1:=\widetilde{E'}$ (see Figure \ref{fig:The dual graph for the minimal resolution}). We may write $K_{\widetilde{X}}+B_{\widetilde{X}}=\widetilde{f}^*(K_X+B)$, where $B_{\widetilde{X}}:=\widetilde{f}^{-1}_*{B}+\sum_i (1-a_i)F_i$, and we define $\widetilde{B}:=\widetilde{f}^{-1}_*{B}+\sum_{\widetilde{x}\in F_i} (1-a_i)F_i$.

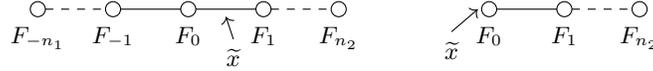
\begin{figure}[ht]
\begin{tikzpicture}
         \draw (4.3,0) circle (0.1);
         \node [below] at (4.3,-0.1) {\footnotesize$F_{-n_1}$};
         \draw [dashed](4.4,0)--(5.2,0);
         \draw (5.3,0) circle (0.1);
         \node [below] at (5.3,-0.1) {\footnotesize$F_{-1}$};
         \node [below] at (6.3,-0.1) {\footnotesize$F_0$};
         \draw [->] (6.9,-0.4)--(6.8,-0.1);
         \node [below] at (6.9,-0.4) {\footnotesize{$\widetilde{x}$}};
         \draw (5.4,0)--(6.2,0);
         \draw (6.3,0) circle (0.1);
         \draw (6.4,0)--(7.2,0);
         \draw (7.3,0) circle (0.1);
         \node [below] at (7.3,-0.1) {\footnotesize$F_1$};
         \draw [dashed] (7.4,0)--(8.2,0);
         \draw (8.3,0) circle (0.1);
         \node [below] at (8.3,-0.1) {\footnotesize$F_{n_2}$};

         
         \node [below] at (10.3,-0.1) {\footnotesize$F_0$};
         \draw (10.3,0) circle (0.1);
         \draw (10.4,0)--(11.2,0);
         \draw (11.3,0) circle (0.1);
         \draw [->] (9.8,-0.3)--(10.15,0);
         \node [below] at (9.8,-0.3) {\footnotesize{$\widetilde{x}$}};
         \node [below] at (11.3,-0.1) {\footnotesize$F_1$};
         \draw [dashed] (11.4,0)--(12.2,0);
         \draw (12.3,0) circle (0.1);
         \node [below] at (12.3,-0.1) {\footnotesize$F_{n_2}$};
         
\end{tikzpicture}  
 \caption{Cases when $\widetilde{x}\in F_0\cap F_1$ and when $\widetilde{x}\notin F_i$ for $i\neq 0$.}
    \label{fig:The dual graph for the minimal resolution}
\end{figure}

If $\widetilde{x}\notin F_i$ for all $i\neq 0$, then we consider the surface germ $(\widetilde{X}\ni \widetilde{x}, \widetilde{B}=\widetilde{f}^{-1}_* B+(1-a_0)F_0)$, where $\widetilde{B}\in \Ii':=\Ii\cup \{1-a\mid a\in \mathrm{Pld}(2,\Ii)\}$. By Lemma \ref{finite pld}, $\Ii'$ satisfies the DCC. Thus by Theorem \ref{nak smooth}, we may find a positive integer $N_1$ which only depends on $\Ii$, and a prime divisor $E$ over $\widetilde{X}\ni \widetilde{x}$, such that $a(E,\widetilde{X}, \widetilde{B})=a(E,X,B)=\mld(X\ni x,B)$, and $a(E,X,0)\leq a(E,\widetilde{X},0)\leq N_1$.


So we may assume that $\widetilde{x}=F_0\cap F_1$. By Lemma \ref{DCC set gap}, there exist positive real numbers $\epsilon,\delta\leq 1$ depending only on $\Ii$, such that $\{\sum_i n_ib_i-1>0\mid b_i\in \Ii'_{\epsilon}\cap[0,1], n_i\in \Zz_{\ge 0}\}\subseteq [\delta,+\infty)$. Recall that $\Ii'_\epsilon=\cup_{b'\in \Ii'} [b'-\epsilon, b']$.

If $a_1-a_0\leq \epsilon$, then we consider the surface germ $(\widetilde{X}\ni\widetilde{x},\widetilde{B}=\widetilde{f}^{-1}_*B+(1-a_0)F_0+(1-a_1)F_1)$, where $\widetilde{B}\in \Ii'_{\epsilon}\cap[0,1]$. By Theorem \ref{nak smooth}, there exist a positive integer $N_2$ which only depends on $\Ii$, and a prime divisor $E$ over $\widetilde{X}\ni \widetilde{x}$, such that $a(E,\widetilde{X}, \widetilde{B})=a(E,X,B)=\mld(X\ni x,B)$ and $a(E,X,0)\leq a(E,\widetilde{X},0)\le N_2$.

If $a_1-a_0\geq \epsilon$, then we claim that there exists a DCC set $\Ii''$ depending only on $\Ii$, such that $1-a_1\in \Ii''$.

\begin{figure}[ht]
\begin{tikzpicture}
         \draw (4.3,0) circle (0.1);
         \node [below] at (4.3,-0.1) {\footnotesize$F_{-n_1}$};
         \draw [dashed](4.4,0)--(5.2,0);
         \draw (5.3,0) circle (0.1);
         \node [below] at (5.3,-0.1) {\footnotesize$F_{-1}$};
         \node [below] at (6.3,-0.1) {\footnotesize$F_0$};
         \draw [->] (6.9,-0.4)--(6.8,-0.1);
         \node [below] at (6.9,-0.4) {\footnotesize{$\widetilde{x}$}};
         \draw (5.4,0)--(6.2,0);
         \draw (6.3,0) circle (0.1);
         \draw (6.4,0)--(7.2,0);
         \draw (7.3,0) circle (0.1);
         \node [below] at (7.3,-0.1) {\footnotesize$F_1$};
          \draw (7.8,0) ellipse (0.8 and 0.18);
         \draw [->] (8.4,-0.14)--(8.8,-0.5);
         \node [below] at (8.8,-0.4)
         {\footnotesize$\text{finite graph}$};
         \draw [dashed] (7.4,0)--(8.2,0);
         \draw (8.3,0) circle (0.1);
         \node [below] at (8.3,-0.1) {\footnotesize$F_{n_2}$};

\end{tikzpicture}  
 \caption{Cases when $a_1-a_0\geq \epsilon$.}
    \label{fig:The dual graph for the cases last case}
\end{figure}
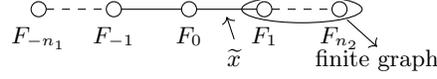

Assume the claim holds, then we consider the surface germ $(\widetilde{X}\ni\widetilde{x},\widetilde{B}=\widetilde{f}^{-1}_*B+(1-a_0)F_0+(1-a_1)F_1)$, where $\widetilde{B}\in \Ii''\cup \Ii'$. By Theorem \ref{nak smooth}, we may find a positive integer $N_3$ which only depends on $\Ii$, and a prime divisor $E$ over $\widetilde{X}\ni \widetilde{x}$, such that $a(E,\widetilde{X}, \widetilde{B})=a(E,X,B)=\mld(X\ni x,B)$ and $a(E,X,0)\leq a(E,\widetilde{X},0)\le N_3$. Let $N:=\max\{N_1,N_2,N_3\}$, and we are done.

\medskip

It suffices to show the claim. By Lemma \ref{Weight}(1), $w_i\leq \frac{2}{\epsilon}$ for any $0<i\leq n_2$. Since $1\geq a_{n_2}=a_0+\sum_{i=0}^{n_2-1} (a_{i+1}-a_i)\geq n_2\epsilon$, $n_2\leq \frac{1}{\epsilon}$. 
We may write  
\begin{align*}
     K_{\widetilde{X}}+\widetilde{f}^{-1}_*B+\sum_{-n_1\leq i\leq n_2} (1-a_i)F_i=\widetilde{f}^*(K_X+B),
\end{align*}
 For each $1\leq j\leq n_2$, we have
$$(K_{\widetilde{X}}+\widetilde{f}^{-1}_*B+\sum_{-n_1\leq i\leq n_2} (1-a_i)F_i)\cdot F_j=0,$$
which implies $\sum_{-n_1\leq i\leq n_2} (a_i-1)F_i\cdot F_j=-{F_j}^2-2+{\widetilde{f}}^{-1}_{*}B \cdot F_j$, or equivalently, 
$$\begin{pmatrix}
  F_1\cdot F_1&\cdots &F_{n_2}\cdot F_1 \\
  \vdots&\ddots & \vdots         \\
  F_1\cdot F_{n_2}&\cdots &F_{n_2}\cdot F_{n_2} \\
\end{pmatrix} 
\begin{pmatrix}
  a_1-1 \\
  \vdots \\
  a_{n_2}-1 \\
\end{pmatrix} 
 =
 \begin{pmatrix}
w_{1}-2+\widetilde{f}^{-1}_*B\cdot F_1+(1-a_0)\\
\vdots\\
w_{n_2}-2+\widetilde{f}^{-1}_*B\cdot F_{n_2}\\
\end{pmatrix}.$$
By assumption, $w_j-2+\widetilde{f}^{-1}_*B\cdot F_j$ belongs to a DCC set, and by Lemma \ref{finite pld}, $1-a_0$ belongs to the DCC set $\{1-a\mid a\in \mathrm{Pld}(2,\Ii)\}$. 

By \cite[Lemma 3.40]{KM98}, $(F_i\cdot F_j)_{1\le i,j\le n_2}$ is a negative definite matrix. Let $(s_{ij})_{n_2\times n_2}$ be the inverse matrix of $(F_i\cdot F_j)_{1\le i,j\le n_2}$. By \cite[Lemma 3.41]{KM98}, $s_{ij}<0$ for any $1\le i,j\le n_2$, and this shows that 
$$1-a_1=-s_{11}(w_{1}-2+\widetilde{f}^{-1}_*B\cdot F_1+(1-a_0))-\sum_{j=2}^{n_2}s_{1j}(w_j-2+\widetilde{f}^{-1}_*B\cdot F_j)$$
belongs to a DCC set.
\end{proof}

\section{Proof of Theorem \ref{generalized nak smooth}}

In this section, we will first show Theorem \ref{generalized nak smooth}, then we will generalize both Theorem \ref{nak smooth} and Theorem \ref{generalized nak smooth}, see Theorem \ref{thm: bdd mld div surface} and Theorem \ref{thm: bdd mld div number surface}.  


\begin{lem}\label{resolution for generalized nak}
Let $(X\ni x, B:=\sum_i b_iB_i)$ be an lc surface germ, where $B_i$ are distinct prime divisors. Let $h: W\to X\ni x$ be a log resolution of $(X\ni x,B)$, and let $S=\{E_j\}$ be a finite set of valuations of $h$-exceptional prime divisors such that $a(E_j,X,B)=\mld (X\ni x, B)$ for all $j$. Suppose that $\sum_i n_ib_i\neq 1$ for any $n_i\in \Zz_{\geq 0}$, $\mld(X\ni x, B)\in (0,1]$, and $E_j$ is exceptional over $\tilde{X}$ for some $j$, where $\tilde{X}\to X$ is the minimal resolution of $X\ni x$. Then there exist a smooth surface $Y$ and a birational morphism $f: Y\to X\ni x$ with the dual graph $\dg$, such that
\begin{enumerate}
    \item $K_Y+B_Y=f^*(K_X+B)$ for some $\Rr$-divisor $B_Y\geq 0$ on $Y$,
    \item each valuation in $S$ corresponds to some vertex of $\dg$,
    \item $\dg$ contains only one $(-1)$-curve $E_0$, and it corresponds to a valuation in $S$, and
    \item $\dg$ is a chain.
\end{enumerate}
\end{lem}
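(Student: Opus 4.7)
The plan follows the approach of the proof of Lemma \ref{Resolution} but requires more delicate arguments since multiple divisors may compute the minimal log discrepancy. The key additional ingredients will be negative-definiteness of the intersection form on exceptional divisors and the arithmetic hypothesis $\sum_i n_i b_i \ne 1$.

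First I will apply Lemma \ref{Terminalization} with the log resolution $h$ and the set $S$ (which satisfies $a(E_j,X,B)=\mld(X\ni x,B)\le 1$), obtaining a smooth surface $Y_0$ and a projective birational morphism $f_0\colon Y_0\to X$ with $K_{Y_0}+B_{Y_0}=f_0^*(K_X+B)$ for some $\Rr$-divisor $B_{Y_0}\ge 0$, such that each valuation in $S$ corresponds to a vertex of the dual graph $\dg_0$ of $f_0$, and each $f_0$-exceptional $(-1)$-curve lies in $S$. Since $\mld(X\ni x,B)>0$, the pair $(X,B)$, and hence $X$ itself, is klt at $x$; by \cite[Theorem 4.7]{KM98} the dual graph of the minimal resolution $\widetilde X\to X$ is a tree of smooth rational curves, and by Lemma \ref{lem tree} the dual graph $\dg_0$ inherits the same property. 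Since some $E_j\in S$ is exceptional over $\widetilde X$, the map $Y_0\to\widetilde X$ is not an isomorphism and thus produces at least one $f_0$-exceptional $(-1)$-curve on $Y_0$, which lies in $S$.

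Next I will show $\dg_0$ contains exactly one $(-1)$-curve, so that condition (3) is satisfied. Suppose $E_0,E_0'$ are two distinct $(-1)$-curves; both are in $S$ and compute $\mld$. The negativity lemma prevents adjacency; by choosing the pair to minimize the tree-distance, I may assume the unique path $E_0=F_0,F_1,\ldots,F_k=E_0'$ (with $k\ge 2$) has no interior $(-1)$-curve, so $w_{F_i}\ge 2$ for $1\le i\le k-1$. Lemma \ref{Weight}(6) then forces $a(F_i)=\mld$ and $w_{F_i}=2$ for $1\le i\le k-1$. A direct check using the weights $(-1,-2,\ldots,-2,-1)$ shows that the effective divisor $D:=E_0+F_1+\cdots+F_{k-1}+E_0'$ satisfies $D\cdot C=0$ for every $C$ on the path, contradicting the negative-definiteness of the intersection form on $f_0$-exceptional divisors (\cite[Lemma 3.40]{KM98}).

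Finally I will show $\dg_0$ is a chain. By Lemma \ref{Classification of dual graphs}(2) the unique $(-1)$-curve $E_0$ is not a fork. Suppose $E'\ne E_0$ is a fork; then $w_{E'}\ge 2$, and either $E_0$ is among its neighbors (apply Lemma \ref{Weight}(3)) or all three chosen neighbors have weight $\ge 2$ (apply Lemma \ref{Weight}(5)). Either way $a(E_0)\ge a(E')$, forcing $a(E')=\mld$; the equality clauses of Lemma \ref{Weight}(2)--(3) then propagate along the path from $E'$ to $E_0$, producing a rigid configuration of $(-2)$-curves around $E'$ on which $f_*^{-1}B$ has zero intersection. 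A positive rational combination $D$ supported on this subconfiguration together with $E_0$ then satisfies $D\cdot C=0$ for every $C$ in the subconfiguration, again contradicting negative-definiteness. The edge cases in which $E_0$ is a leaf of $\dg_0$ adjacent to a vertex $F_1$ with $a(F_1)=\mld$ are ruled out directly by the hypothesis: the intersection equation at such a leaf $E_0$ yields $f_*^{-1}B\cdot E_0=1$, i.e.\ $\sum_i n_i b_i=1$ for some non-negative integers $n_i$, contradicting $\sum_i n_i b_i\ne 1$. The main obstacle is this last step, because without the uniqueness of the MLD-computing vertex available in Lemma \ref{Resolution}, one must carefully combine the equality clauses of Lemma \ref{Weight} with negative-definiteness in each fork configuration, and invoke the arithmetic hypothesis precisely for the leaf configurations where the rigidity analysis does not itself produce a singular subconfiguration.
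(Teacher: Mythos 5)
Your proposal is correct, and for the setup and for the uniqueness of the $(-1)$-curve it is essentially identical to the paper's proof: apply Lemma \ref{Terminalization} to $S$, note that some $E_j\in S$ exceptional over $\widetilde X$ forces at least one $(-1)$-curve, and kill a second $(-1)$-curve by running Lemma \ref{Weight}(6) along the connecting path and observing that the resulting divisor $E_0+F_1+\cdots+F_{k-1}+E_0'$ has self-intersection $0$. Where you genuinely diverge is the exclusion of forks. The paper, after deriving via Lemma \ref{Weight}(3)(5)(6) that the chain from $E_0$ to the fork $F$ consists of curves with $a=\mld$ and weight $2$ (including $F$ itself), splits into two cases: if $E_0$ is a leaf it invokes $\sum_i n_ib_i\neq 1$ through Lemma \ref{bound chain by edge $(-1)$-curve}(2); otherwise it contracts $C_0',\dots,C_{m-1}'$ successively to turn $F$ into a $(-1)$-curve fork, contradicting Lemma \ref{Classification of dual graphs}(2). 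You instead exhibit a null vector of the intersection form: with the rigid configuration in hand (fork $E'$ of weight $2$ whose two off-path neighbours $Q,R$ also have weight $2$, by the equality clause of Lemma \ref{Weight}(3)), the effective $\Qq$-divisor $D=E_0+C_1+\cdots+C_{m-1}+E'+\tfrac12 Q+\tfrac12 R$ satisfies $D\cdot C=0$ for every $C$ in its support, hence $D^2=0$, contradicting \cite[Lemma 3.40]{KM98}. This is a clean replacement for the contraction step, and in fact it does not care whether $E_0$ is a leaf, so your separate appeal to the arithmetic hypothesis in the "leaf" sub-case (which mirrors the paper's) is a harmless redundancy rather than a necessity of your route; the price is that you must actually write down the coefficients of $D$ and check the equality clauses at the fork, which your sketch leaves implicit but which do work out exactly as you indicate.
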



\begin{proof}
By Lemma \ref{Terminalization}, there exist a smooth surface $Y$ and a birational morphism $f:Y\to X\ni x$ with the dual graph of $\dg$ which satisfy (1)--(2), and each $f$-exceptional $(-1)$-curve corresponds to some valuation in $S$. 

\medskip

For (3), by the assumption on $S$, $\dg$ contains at least one $(-1)$-curve $E_0$. If there exist two $(-1)$-curves $E_0'\neq E_0$, then $a(E_0,X,B)=a(E_0',X,B)=\mld(X\ni x, B)>0$ by construction, and there exists a set of distinct vertices $\{C_k\}_{0\leq k \leq n}$ of $\dg$ such that $n\ge 2$, $C_{n}:=E_0'$, $C_0$ is a $(-1)$-curve, $-C_k\cdot C_k\ge -2$ for $1\le k\le n-1$, and $C_k$ is adjacent to $C_{k+1}$ for $0\leq k\leq n-1$. Since $a(C_0,X,B)=a(C_{n},X,B)=\mld(X\ni x, B)>0$, by Lemma \ref{Weight}(6), $-C_k\cdot C_k=2$ for $1\leq k\leq n-1$. Let $E:=\sum_{k=0}^{n} C_k$, then $E\cdot E=0$, which contradicts the negativity lemma.

\medskip

For (4), suppose that $\dg$ contains a fork $F$. Since $E_0$ is a $(-1)$-curve, by Lemma \ref{Classification of dual graphs}(2), $E_0\neq F$. By (3) and (5) of Lemma \ref{Weight}, we have $a(E_0,X,B)\geq a(F,X,B)\geq \mld(X\ni x, B)$, thus $a(E_0,X,B)= a(F,X,B)=\mld(X\ni x, B)>0$. There exists a set of distinct vertices $\{C_i'\}_{0\leq i\leq m}$, such that $C_0':=E_0$, $C_m':= F$, and $C_i'$ is adjacent to $C_{i+1}'$ for $0\leq i\leq m-1$. We may denote $w_i':=-C_i'\cdot C_i'$ and $a_i':=a(C_i',X,B)$ for $0\leq i\leq m$. By Lemma \ref{Weight}(6), we have $a_0'=\cdots =a_m'$, and $w_k'=2$ for $1\leq k\leq m-1$. Since $C_{m}'$ is a fork and $a_{m-1}'=a_m'$, by Lemma \ref{Weight}(3), $w_m'=2$.

If $C_0'$ is adjacent to only one vertex of $\dg$, which is $C_1'$ by our construction, then since $\sum_i n_ib_i\neq 1$ for any $n_i\in \Zz_{\geq 0}$, $a_0'\neq a_1'$ by Lemma \ref{bound chain by edge $(-1)$-curve}(2), a contradiction.

\begin{figure}[ht]
\begin{tikzpicture}
         \draw (0.7,0) circle (0.1);
         \draw [dashed] (0.8,0)--(1.6,0);
         \draw (1.7,0) circle (0.1);
         \node [below] at (1.7,-0.1) {\footnotesize$C_{-1}'$};
         \draw (1.8,0)--(2.4,0);
         \draw (2.5,0)  circle  (0.1);   
         \node [below] at (2.5,-0.1) {\footnotesize$C_0'$};
         \node [above] at (2.5,0.1) {\footnotesize$1$}; 
         \draw (2.6,0)--(3.2,0);
         \draw (4.5,0) circle (0.1);
         \node [below] at (4.5,-0.1) {\footnotesize$C_{m-1}'$};     \node [above] at (4.5,0.1) {\footnotesize$2$};  
         \draw [dashed] (5.3,0.1)--(5.3,0.9);
         \draw (5.3,1.0) circle (0.1);
         \draw [dashed] (3.4,0)--(4.4,0);
         \draw (3.3,0) circle (0.1);
         \node [below] at (3.3,-0.1) {\footnotesize$C_1'$};
         \node [above] at (3.3,0.1) {\footnotesize$2$}; 
         \draw (4.6,0)--(5.2,0);
         \draw (5.3,0) circle (0.1);
         \draw (5.3,0) circle (0.1);
         \node [above] at (5.45,0.1) {\footnotesize$2$}; 
         \node [below] at (5.3,-0.1) {\footnotesize$C_m'$}; 
         \draw [dashed] (5.4,0)--(6.2,0);
         \draw (6.3,0) circle (0.1);
\end{tikzpicture}
 \caption{ $C_0'$ is adjacent to $C_{-1}'$ and $C_1'$}
    \label{fig:chain connecting fork}
\end{figure}
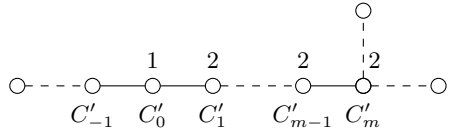

Thus we may assume that $C_0'$ is adjacent to a vertex $C_{-1}'$ of $\dg$ other than $C_1'$ (see Figure \ref{fig:chain connecting fork}). We may contract $C_k'$ for $0\leq k\leq m-1$ step by step, and will end up with a fork which is a $(-1)$-curve, this contradicts Lemma \ref{Classification of dual graphs}(2).
\end{proof}

\begin{lem}\label{plt blow up and lc place}
Let $(X\ni x, B:=\sum_i b_iB_i)$ be an lc surface germ, where $B_i$ are distinct prime divisors. Suppose that $X\ni x$ is a klt germ, $\mld(X\ni x, B)=0$, and $\sum_i n_ib_i\neq 1$ for any $n_i\in \Zz_{\ge 0}$. Then there is only one prime divisor $E$ over $X\ni x$ such that $a(E,X,B)=0$.
\end{lem}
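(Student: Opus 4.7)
The plan is to argue by contradiction. Suppose two distinct prime divisors $E \neq E'$ over $X\ni x$ both satisfy $a(E,X,B) = a(E',X,B) = 0$. The arithmetic hypothesis $\sum_i n_i b_i \neq 1$ specialises, with $n_i = 1$ and all other $n_j = 0$, to $b_i \neq 1$; combined with lc this gives $b_i < 1$, so $\lfloor B \rfloor = 0$ and both $E, E'$ are exceptional over $X$. I will apply Lemma~\ref{Terminalization} with $S = \{E, E'\}$ and then contract any remaining $f$-exceptional $(-1)$-curves not lying in $S$ to produce a smooth projective birational morphism $f\colon Y\to X\ni x$ with $K_Y + B_Y = f^*(K_X+B)$, $B_Y\geq 0$, in which $E$ and $E'$ are the only $f$-exceptional $(-1)$-curves. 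Since $X\ni x$ is klt, $f$ factors through the minimal resolution, whose dual graph is a tree of smooth rational curves by \cite[Theorem~4.7]{KM98}, so Lemma~\ref{lem tree} ensures the dual graph $\dg$ of $f$ is also a tree of smooth rational curves.

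Next I will fix the unique path $E = C_0, C_1,\ldots, C_n = E'$ in $\dg$. Two adjacent $(-1)$-curves in $\dg$ would yield $(C_0+C_n)^2 = 0$ for an exceptional divisor, contradicting the negativity lemma, so $n\geq 2$ and each intermediate $C_k$ ($1\leq k\leq n-1$) satisfies $w_k := -C_k\cdot C_k \geq 2$. Applying Lemma~\ref{Weight}(2) along this chain shows that the sequence $a_{C_k}$ is convex in $k$; combined with the endpoint values $a_{C_0} = a_{C_n} = 0$ and the lc bound $a_{C_k}\geq 0$, this forces $a_{C_k}=0$ for every $0\leq k\leq n$. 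In particular, every vertex on the path is an lc place, and the neighbour $C_1$ of $E$ already satisfies $a_{C_1}=0$.

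If $E$ has valence $1$ in $\dg$, then all hypotheses of Lemma~\ref{bound chain by edge $(-1)$-curve}(2) hold for the chain $C_0, C_1,\ldots,C_n$, and the arithmetic condition $\sum_i n_i b_i\neq 1$ yields the strict inequality $a_{C_0}<a_{C_1}$, contradicting $a_{C_0}=a_{C_1}=0$. The main obstacle is the case in which $E$ has valence $\geq 2$ in $\dg$. To address it, the plan is to use the identity
\[
f_*^{-1}B\cdot E \;=\; 2 - m + \sum_{C \text{ adjacent to } E} a_C,
\]
obtained from $(K_Y+B_Y)\cdot E = 0$ (with $m$ the valence of $E$), together with the observation that any off-path neighbour $C'$ of $E$ with $a_{C'}=0$ is itself an lc place. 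I will iteratively enlarge $S$ by these additional lc places and rerun Lemma~\ref{Terminalization}, so as to realise on $Y$ the subtree $T\subset\dg$ of all lc places. A careful leaf analysis---using Lemma~\ref{Classification of dual graphs}(1) to rule out a $(-1)$-curve adjacent to two $(-2)$-curves---will locate a leaf $L$ of $T$ that is simultaneously a leaf of $\dg$, at which the identity reduces to $f_*^{-1}B\cdot L = 1$ and contradicts the arithmetic condition. An alternative route is to pass to a log resolution dominating $Y$ and apply adjunction on $E$: at each point of $E$ the different $\Diff_E$ has coefficient either $b_i<1$ or $1-a_C<1$, so $(E,\Diff_E)$ is klt and the pair is plt near $E$, again forcing $E$ to be the unique lc place.
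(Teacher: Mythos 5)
Your reduction to a smooth model $Y$ whose dual graph $\dg$ is a tree with $E,E'$ the only $(-1)$-curves, the convexity argument via Lemma~\ref{Weight}(2) forcing $a_{C_k}=0$ along the whole path from $E$ to $E'$, and the valence-one case (where $a_{C_1}-a_{C_0}=f_*^{-1}B\cdot C_0-1\neq 0$ contradicts $a_{C_0}=a_{C_1}=0$) are all correct, and this is a genuinely different, more combinatorial route than the paper's. However, the case you yourself identify as ``the main obstacle'' --- $E$ of valence $\ge 2$ in $\dg$ --- is not actually proved, and it is the heart of the lemma. Two concrete problems. First, the iterative enlargement of $S$ ``so as to realise on $Y$ the subtree of all lc places'' cannot terminate in the scenario you are trying to exclude: once $E$ and $C_1$ are adjacent lc places meeting at a point $p$, the blow-up of $p$ produces a new divisor $F$ with $a(F,X,B)=2-\mult_p B_Y\le 0$, hence $=0$ by lc-ness, and iterating yields infinitely many lc places over $X\ni x$; so ``the subtree of all lc places'' is infinite exactly when the contradiction hypothesis holds, and the rerunning of Lemma~\ref{Terminalization} never stabilises. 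Second, the key assertion that a leaf of $T$ must be a leaf of $\dg$ is left unjustified, and the local identity alone does not rule out the alternative: for a leaf $L$ of $T$ of valence $2$ in $\dg$ with one neighbour in $T$ and one neighbour $D\notin T$, the relation at $L$ gives only $f_*^{-1}B\cdot L=a_D>0$, which is not in conflict with $\sum_i n_ib_i\neq 1$. The fallback adjunction argument also fails as stated: in the configuration being excluded, $E$ meets the lc place $C_1$, so $\Diff_E$ has a point of coefficient $1-a_{C_1}=1$ and $(E,\Diff_E)$ is \emph{not} klt; asserting ``$1-a_C<1$'' presupposes the conclusion.

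For contrast, the paper avoids the tree combinatorics entirely: it takes a plt blow-up $g\colon Y\to X\ni x$ (via \cite[Lemma 2.7]{Bir16}) extracting a single lc place $E$ with $(Y,E)$ plt, proves $E$ is normal using relative Kawamata--Viehweg vanishing, and computes the different $B_E=\sum_i\frac{m_i-1+\sum_j n_{i,j}b_j}{m_i}p_i$; the hypothesis $\sum_j n_{i,j}b_j\neq 1$ is used precisely here to force $\lfloor B_E\rfloor=0$, i.e.\ $(E,B_E)$ klt, after which a minimal-resolution analysis at the points of $E$ shows $(Y,B_Y)$ is plt and hence $E$ is the unique lc place. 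If you want to salvage your combinatorial approach, the missing ingredient is a global argument (not a vertex-by-vertex identity) showing that a connected chain of lc places containing a $(-1)$-curve of valence $\ge 2$ forces $\sum_i n_ib_i=1$ somewhere; as written, that step is a gap.
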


\begin{proof}
By \cite[Lemma 2.7]{Bir16}, we can find a plt blow up $g: Y\to X\ni x$, such that
\begin{itemize}
    \item $K_Y+B_Y=g^*(K_X+B)$ for some $\Rr$-divisor $B_Y\geq 0$ on $Y$,
    \item there is only one $g$-exceptional prime divisor $E$,
    \item $\Supp E\subseteq \lf B_Y \rf$, and
    \item $(Y, E)$ is plt.
\end{itemize}

Since the relative Kawamata-Viehweg vanishing theorem holds for birational morphisms between surfaces in any characteristic (see \cite[Theorem 0.5]{Tan15}), by a similar argument as in \cite[Proposition 4.1]{HX15}, $E$ is normal. By the adjunction formula (\cite[\S 3]{Sho92}, \cite[\S 16]{Kol92}),
$K_E+B_E:=(K_Y+B_Y)|_E$, where $$B_E=\sum_i \frac{m_i-1+\sum_j n_{i,j}b_j}{m_i}p_i$$ for some distinct closed points $p_i$ on $E$ and some $m_i\in \Zz_{>0}$ and $n_{i,j}\in \Zz_{\geq 0}$ such that $\sum_j n_{i,j} b_j\leq 1$ for all $i$. By assumption, $\sum_j n_{i,j} b_j\neq 1$, thus $\sum_j n_{i,j} b_j< 1$ for all $i$, which implies $\lf B_E \rf=0$, thus $(E, B_E)$ is klt.

Consider the following commutative diagram:
    $$
    \xymatrix@R=2em{
    \widetilde{E}\ar[r]^{j} \ar[d]_{\widetilde{f}} & \widetilde{Y}\ar[d]^{f}\\
    E  \ar[r]^i & Y
    }$$
here $f: \widetilde{Y}\to Y\ni y$ is the minimal resolution of some point $y\in E\subset Y$ with $K_{\widetilde{Y}}+B_{\widetilde{Y}}=f^*(K_Y+B_Y)$, and $\widetilde{E}$ denotes the strict transform of $E$ on $\widetilde{Y}$. We have $i^*(K_{\widetilde{Y}}+B_{\widetilde{Y}})=\widetilde{f}_*j^*f^*(K_Y+B_Y).$

\begin{figure}[ht]
\begin{tikzpicture}
         \draw (3.5,0) circle (0.1);
         \node [above] at (3.5,0.1) {\footnotesize$\widetilde{E}$};
         \draw (3.6,0)--(4.4,0);
         \draw (4.5,0) circle (0.1);
         \node [above] at (4.5,0.1) {\footnotesize$E_1$};
         \draw (4.6,0)--(5.4,0);

         \draw (5.5,0) circle (0.1);
         \node [above] at (5.5,0.1) {\footnotesize$E_2$};
         \draw (5.6,0)--(6.4,0);
         \draw (6.5,0) circle (0.1);
         \node [above] at (6.5,0.1) {\footnotesize$E_3$};
         \draw [dashed] (6.6,0)--(7.4,0);
         \draw (7.5,0) circle (0.1);
         \node [above] at (7.5,0.1)
         {\footnotesize${E_{n}}$};
\end{tikzpicture}   
\end{figure}

Let $\mathcal{DG}$ be the dual graph of $f$. Consider the plt pair $(X,E)$, we may view $\widetilde{E}$ as an external part of $\mathcal{DG}$, by \cite[Theorem 4.15]{KM98} $\mathcal{DG}\cup \widetilde{E}$ is a chain (We can view $\widetilde{E}$ as a vertex), and $\widetilde{E}$ is adjacent to only one vertex $E_1$ of $\mathcal{DG}$. Since
\begin{align}\label{the restriction of the adjunction}
    K_E+B_E=K_Y+B_Y|_E=\widetilde{f}_*(K_{\widetilde{Y}}+\widetilde{E}+(1-a(E_1,Y,B_Y))E_1\cap \widetilde{E})
\end{align} and $E_1\cdot \widetilde{E}=1$, we have $a(E_1,Y,B_Y)>0$ since $K_E+B_E$ is klt, thus $\mathrm{pld}(Y\ni y, B_Y)>0$.

If $\mld(Y\ni y, B_Y)\neq \pld(Y\ni y, B_Y)$, we can replace $\widetilde{f}$ by a morphism constructed as in Lemma \ref{Resolution}, again $\dg$ is a chain, $E$ is adjacent to only one vertex $E_1$ of $\dg$, and $a(E_1,Y,B_Y)>0$ by (\ref{the restriction of the adjunction}). Since $(K_Y+B_Y)\cdot E_1=0$, we have $a(E_2,Y,B_Y)-a(E_1,Y,B_Y)\geq 0$, by Lemma \ref{Weight}(2) we have $\mld(Y\ni y,B_Y)>0$.

Now $(Y, B_Y)$ is plt, thus $E$ is the only prime divisor over $X\ni x$ such that $a(E,X,B)=0$.

\end{proof}

\begin{proof}[Proof of Theorem \ref{generalized nak smooth}]

By Lemma \ref{Terminal}, we may assume that $\mld(X\ni x, B)\leq 1$.

If $\mld(X\ni x, B)=0$, since $X\ni x$ is smooth, then by Lemma \ref{plt blow up and lc place}, there exists a unique prime divisor $E$ over $X\ni x$ such that $a(E,X,B)=0$. By Theorem \ref{nak smooth}, $a(E,X,0)\leq 2^{N_0}$. Thus we may assume that $\mld(X\ni x, B)>0$.

We may apply Lemma \ref{resolution for generalized nak} to $S$, there exist a smooth surface $Y$ and a birational morphism $f: Y\to X\ni x$ with the dual graph $\dg$ which satisfy Lemma \ref{resolution for generalized nak}(1)--(4). Let $E_0$ be the unique $(-1)$-curve in $\dg$. It suffices to give an upper bound for the number of vertices of $\dg$.

If $E_0$ is adjacent to only one vertex of $\dg$, then by Lemma \ref{bound chain by edge $(-1)$-curve}(2)--(3), the number of vertices of $\dg$ is bounded from above by $1+\frac{1}{\gamma}$, and $|S|=1$.

If $E_0$ is adjacent to two vertices of $\dg$, then by Lemma \ref{Bound $(-1)$-curve adjacent to two vertices}, the number of vertices of $\dg$ is bounded from above by $1+\frac{32}{\gamma^2}+\frac{1}{\gamma}$. Thus $|S|\leq N_0$, and by Lemma \ref{blow up and N}, $a(E,X,0)\le 2^{N_0}$ for any $E\in S$.
\end{proof}

Now we are going to introduce and prove Theorem \ref{thm: bdd mld div surface} and Theorem \ref{thm: bdd mld div number surface}. First, we need to introduce the following definition.

\begin{defn}\label{determinant}
Let $X\ni x$ be a klt surface germ. Let $\widetilde{f}: \widetilde{X}\to X\ni x$ be the minimal resolution, and $\{E_i\}_{1\leq i\leq n}$ the set of $\widetilde{f}$-exceptional prime divisors. The \emph{determinant} of $X\ni x$ is defined by

 $$\det(X\ni x):=\left\{
     \begin{array}{lcl}
    |\mathrm{det} (E_i\cdot E_j)_{1\leq i,j\leq n}|       &{\text{if $x\in X$ is a singular point,}}\\
   1    &{\text{if $x\in X$ is a smooth point.}}
     \end{array} \right.$$
\end{defn}

\begin{thm}\label{thm: bdd mld div surface}

Let $\gamma\in (0,1]$ be a real number, and $I$ a positive integer. Then $N_0:=\lfloor 1+\frac{32I^2}{\gamma^2}+\frac{I}{\gamma}\rfloor$ satisfies the following.

Let $(X\ni x,B:=\sum_{i} b_iB_i)$ be an lc surface germ, where $B_i$ are distinct prime divisors. Suppose that $\det(X\ni x)\mid I$, and $\{\sum_i n_ib_i-t>0\mid n_i\in \mathbb{Z}_{\geq 0}, t\in \mathbb{Z}\cap[1,I]\}\subseteq [\gamma,+\infty)$. Then there exists a prime divisor $E$ over $X\ni x$ such that $a(E,X,B)=\mld(X\ni x,B)$, and $a(E,X,0)\leq 2^{N_0}$.
\end{thm}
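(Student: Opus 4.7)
The plan is to reduce to the minimal resolution of $X\ni x$ and apply Theorem \ref{nak smooth} with an adjusted parameter, using $\det(X\ni x)\mid I$ to transfer the coefficient gap condition. First I would handle the easy cases. If $X\ni x$ is smooth then $\det(X\ni x)=1$, and specializing the hypothesis to $t=1$ recovers the hypothesis of Theorem \ref{nak smooth} for $\gamma$; if $\mld(X\ni x,B)=\pld(X\ni x,B)$, the $\widetilde{f}$-exceptional divisor on the minimal resolution computing $\pld$ also computes $\mld$ and has $a(\cdot,X,0)\le 1\le 2^{N_0}$. By Lemma \ref{Terminal}, we may also assume $\mld(X\ni x,B)\le 1$. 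So the remaining case is: $X\ni x$ is a singular klt germ with $\mld\ne\pld$ and $\mld\le 1$.

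In that case, let $\widetilde{f}\colon\widetilde{X}\to X$ denote the minimal resolution, $\{\widetilde{E}_j\}$ its exceptional divisors, $w_j:=-\widetilde{E}_j^2\ge 2$, and write $K_{\widetilde{X}}+\widetilde{B}=\widetilde{f}^*(K_X+B)$ with $\widetilde{B}=\widetilde{f}^{-1}_*B+\sum_j c_j\widetilde{E}_j$, where $c_j:=1-a(\widetilde{E}_j,X,B)\in[0,1)$. By Lemma \ref{Resolution}(5), there is a unique closed point $\widetilde{x}\in\widetilde{X}$ such that every prime divisor $E$ over $X\ni x$ computing $\mld(X\ni x,B)$ has $\Center_{\widetilde{X}}E=\widetilde{x}$; in particular $\mld(\widetilde{X}\ni\widetilde{x},\widetilde{B})=\mld(X\ni x,B)$. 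The crux is an arithmetic identity I would establish next. Let $d:=\det(X\ni x)$, so $d\mid I$, and let $M:=(\widetilde{E}_i\cdot\widetilde{E}_j)$, a negative-definite matrix with $|\det M|=d$ whose inverse has all entries strictly negative by \cite[Lemma 3.41]{KM98}. Solving $M\vec{c}=\vec{v}$ with $v_j=(2-w_j)-\widetilde{f}^{-1}_*B\cdot\widetilde{E}_j$ via Cramer's rule, and noting that $2-w_j\le 0$ while $\widetilde{f}^{-1}_*B\cdot\widetilde{E}_j$ is a $\mathbb{Z}_{\ge 0}$-linear combination of the $b_k$, one obtains
\[
dc_i=P_i+\sum_k \gamma_{ik}\,b_k,\qquad P_i,\gamma_{ik}\in\mathbb{Z}_{\ge 0}.
\]

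With this identity, any $\mathbb{Z}_{\ge 0}$-combination of the coefficients of $\widetilde{B}$ (which lie in $\{b_i\}\cup\{c_j\}$) equals $\tfrac{1}{d}\bigl(\sum_i N_ib_i+T\bigr)$ for some $N_i,T\in\mathbb{Z}_{\ge 0}$; the condition that it exceeds $1$ becomes $\sum_i N_ib_i>d-T$. When $T<d$ we have $d-T\in\{1,\dots,d\}\subseteq\{1,\dots,I\}$, so the original hypothesis yields $\sum_iN_ib_i-(d-T)\ge\gamma$. When $T\ge d$ with the combination strictly exceeding $1$, either $T>d$ (producing integer excess $\ge 1\ge\gamma$) or $T=d$ with some $N_i\ne 0$, in which case $b_i\ge\gamma$ by Lemma \ref{lower bound for gamma} gives $\sum_i N_ib_i\ge\gamma$. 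In every scenario the combination minus $1$ is $\ge\gamma/d\ge\gamma/I$, so the coefficients of $\widetilde{B}$ satisfy the hypothesis of Theorem \ref{nak smooth} with parameter $\gamma':=\gamma/I$.

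Finally I would apply Theorem \ref{nak smooth} to the lc germ $(\widetilde{X}\ni\widetilde{x},\widetilde{B})$ on the smooth surface $\widetilde{X}$, obtaining a prime divisor $E$ over $\widetilde{X}\ni\widetilde{x}$ with $a(E,\widetilde{X},\widetilde{B})=\mld(X\ni x,B)$ and $a(E,\widetilde{X},0)\le 2^{\lfloor 1+32/\gamma'^2+1/\gamma'\rfloor}=2^{N_0}$. Since $\widetilde{f}$ is the minimal resolution of a klt surface germ, $a(\widetilde{E}_j,X,0)\in(0,1]$ for all $j$, so $K_{\widetilde{X}}-\widetilde{f}^*K_X$ is anti-effective and $a(E,X,0)\le a(E,\widetilde{X},0)\le 2^{N_0}$, as required. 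The main obstacle I anticipate is the displayed nonnegativity identity: its verification hinges on the simultaneous use of integrality coming from $|\det M|=d$, the strict negativity of $M^{-1}$ (via connectedness and negative-definiteness), and the sign compatibility of $2-w_j\le 0$ with $\widetilde{f}^{-1}_*B\cdot\widetilde{E}_j\ge 0$, so that no sign cancellation spoils the expressed integers.
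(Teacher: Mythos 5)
Your proposal is correct and follows essentially the same route as the paper: reduce via Lemma \ref{Terminal} and the $\mld=\pld$ case, pass to the minimal resolution with the unique center $\widetilde{x}$ from Lemma \ref{Resolution}, use the negative-definite intersection matrix and $\det(X\ni x)\mid I$ to show the coefficients of $\widetilde{B}$ satisfy the gap condition with parameter $\gamma/I$ (the paper isolates this as Lemma \ref{lem: coefficient set}(1), working with $Is_{ij}\in\mathbb{Z}_{<0}$ directly rather than with $d=\det(X\ni x)$ and then $d\mid I$), and apply Theorem \ref{nak smooth} to $(\widetilde{X}\ni\widetilde{x},\widetilde{B})$. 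Your explicit case analysis for $T\ge d$ is a minor elaboration of what the paper leaves implicit; the argument is the same.
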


\begin{thm}\label{thm: bdd mld div number surface}
Let $\gamma\in (0,1]$ be a real number, and $X\ni x$ a klt surface germ. Let $N$ be the number of vertices of the dual graph of the minimal resolution of $X\ni x$, and $I:=\det(X\ni x)$. Then $N_0:=\lfloor 1+\frac{32I^2}{\gamma^2}+\frac{I}{\gamma}\rfloor+N$ satisfies the following.

Let $(X\ni x,B:=\sum_i b_iB_i)$ be an lc surface germ, where $B_i$ are distinct prime divisors. Let $S:=\{E\mid E\text{ is a prime divisor over } X\ni x, a(E,X,B)=\mld(X\ni x,B)\}$. Suppose that $\{\sum_i n_ib_i-t\geq 0\mid n_i\in \mathbb{Z}_{\geq 0}, t\in \mathbb{Z}\cap[1,I]\}\subseteq [\gamma,+\infty)$. Then 
 $|S|\le N_0$, and $a(E,X,0)\le 2^{N_0}$ for any $E\in S$.
\end{thm}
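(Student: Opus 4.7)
The plan is to reduce the statement to Theorem \ref{generalized nak smooth} via the minimal resolution $\widetilde{f}:\widetilde{X}\to X$. By Lemma \ref{Terminal} we may assume $\mld(X\ni x,B)\le 1$; the case $\mld=0$ is dispatched by Lemma \ref{plt blow up and lc place} (whose hypothesis $\sum n_ib_i\neq 1$ follows from the given gap condition at $t=1\in[1,I]$), leaving only the bound on $a(E,X,0)$ for the unique $E\in S$, which will be obtained by the same lift to $\widetilde{X}$ described below. So assume $0<\mld(X\ni x,B)\le 1$. Write $K_{\widetilde{X}}+B_{\widetilde{X}}=\widetilde{f}^*(K_X+B)$ with $B_{\widetilde{X}}=\widetilde{f}^{-1}_*B+\sum_j(1-a_j)\widetilde{E}_j$, where $a_j:=a(\widetilde{E}_j,X,B)\in(0,1]$, and partition $S=S_1\sqcup S_2$ according to whether elements are $\widetilde{f}$-exceptional or exceptional over $\widetilde{X}$. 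Trivially $|S_1|\le N$ and $a(E,X,0)\le 1$ for $E\in S_1$.

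The crucial arithmetic step is to show that the pair $(\widetilde{X},B_{\widetilde{X}})$ inherits a gap condition with parameter $\gamma/I$. Intersecting the crepant identity with each $\widetilde{E}_k$ gives a linear system whose matrix $M=(\widetilde{E}_i\cdot\widetilde{E}_j)$ has $|\det M|=I$, and by \cite[Lemma 3.41]{KM98} the matrix $-IM^{-1}$ has positive integer entries; inverting yields
\[
I(1-a_j)=T_j+\sum_i M_{ji}b_i,\qquad T_j\in\ZZ\cap[0,I-1],\;M_{ji}\in\ZZ_{\ge 1},
\]
with $T_j\le I-1$ following from $a_j>0$ applied with $B=0$, and the positivity $M_{ji}\ge 1$ following from the germ hypothesis $x\in\bigcap_i\Supp B_i$, which forces every strict transform $\widetilde{f}^{-1}_*B_i$ to meet the connected exceptional locus. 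Plugging this in, any expression $\sum_i m_ib_i+\sum_{j\in J}m_j(1-a_j)-1$ (with $|J|\le 2$) equals $\tfrac{1}{I}(\sum_i n_ib_i-t)$ for some $n_i\in\ZZ_{\ge 0}$ and $t=I-\sum_{j\in J}m_jT_j\in\ZZ$. A case analysis on the sign of $t$ (namely $t\ge 1$ uses the hypothesis directly; $t<0$ gives the automatic lower bound $1/I$; $t=0$ is excluded, because it would force $n_i=0$ and hence $m_j=0$ for all $j\in J$ by $M_{ji}\ge 1$, contradicting $\sum m_jT_j=I$) proves the gap $\ge\gamma/I$.

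With the lifted gap in hand, each $E\in S_2$ has center at some $\tilde{x}\in\widetilde{X}$ with $\mld(\widetilde{X}\ni\tilde{x},B_{\widetilde{X}})=\mld(X\ni x,B)$, and the smooth surface germ $(\widetilde{X}\ni\tilde{x},B_{\widetilde{X}})$ satisfies the hypothesis of Theorem \ref{generalized nak smooth} with parameter $\gamma/I$. Applying that theorem yields $|\{E\in S_2:\Center_{\widetilde{X}}E=\tilde{x}\}|\le\lfloor 1+32I^2/\gamma^2+I/\gamma\rfloor$ and $a(E,\widetilde{X},0)\le 2^{\lfloor 1+32I^2/\gamma^2+I/\gamma\rfloor}$. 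Since $K_{\widetilde{X}}+\sum_j(1-a(\widetilde{E}_j,X,0))\widetilde{E}_j=\widetilde{f}^*K_X$ is crepant with coefficients in $[0,1)$ by klt-ness of $X\ni x$, we have $a(E,X,0)\le a(E,\widetilde{X},0)\le 2^{N_0}$ for every $E\in S_2$. Combined with the $S_1$ estimates this gives the discrepancy bound; for the cardinality bound one invokes the single-$(-1)$-curve normal form of Lemma \ref{Resolution}/Lemma \ref{resolution for generalized nak} on $\widetilde{X}$ to show that all of $S_2$ concentrates above a single $\tilde{x}$, from which $|S|\le N_0$ follows. The main obstacle is the arithmetic lift, in particular verifying $M_{ji}\ge 1$ and ruling out the boundary case $t=0$, since only the $\ge 0$ gap condition (not merely $>0$) is strong enough to invoke Theorem \ref{generalized nak smooth} and obtain the cardinality bound on $S$.
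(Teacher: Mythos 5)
Your proposal follows essentially the same route as the paper: reduce to the minimal resolution $\widetilde{f}:\widetilde{X}\to X$, transfer the gap condition to the coefficients of $B_{\widetilde{X}}$ with parameter $\gamma/I$ (this is exactly Lemma \ref{lem: coefficient set}(2), which you re-derive inline, including the key points $-IM^{-1}\in\Zz_{\ge 1}^{n\times n}$ and $M_{ji}\ge 1$ via $x\in\bigcap_i\Supp B_i$), and then apply Lemma \ref{resolution for generalized nak} to concentrate $S_2$ over a single $\widetilde{x}$ and invoke Theorem \ref{generalized nak smooth}. Your case analysis on $t$ is correct in substance (the $t=0$ subcase is not ``excluded'' but rather shown to still satisfy the bound, since $\sum_j m_jT_j=I$ forces some $m_j>0$, hence $n_i\ge m_jM_{ji}\ge 1$ and $\sum_i n_ib_i\ge\gamma$).

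The one concrete gap is the case $B=0$. Your $t=0$ argument needs at least one index $i$ to conclude $n_i\ge 1$; when $B=0$ the lifted gap condition is simply false in general (e.g.\ for the cyclic quotient singularity with a single $(-3)$-curve one has $1-a_1=\tfrac13$ and $3\cdot\tfrac13-1=0$), so Theorem \ref{generalized nak smooth} cannot be invoked on $(\widetilde{X}\ni\widetilde{x},B_{\widetilde{X}})$. The paper dispatches this first: if $B=0$ and $X\ni x$ is klt, every divisor computing $\mld(X\ni x,0)\le 1$ already lives on $\widetilde{X}$, giving $|S|\le N$ and $a(E,X,0)\le 1$ directly. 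Relatedly, before applying Lemma \ref{resolution for generalized nak} you should record that $\lfloor B\rfloor=0$ (from the hypothesis at $t=1$), so $(X,B)$ is klt near $x$ and $S$ is finite, and that some element of $S$ is exceptional over $\widetilde{X}$ (otherwise one again concludes trivially). These are one-line fixes, but as written your argument breaks at $B=0$.
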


\begin{rem}
Theorem \ref{thm: bdd mld div number surface} does not hold if $X\ni x$ is not klt (see Example \ref{ex: counterexample generealized nak smooth when B=0}), or if we only bound $\det (X\ni x)$ as in Theorem \ref{thm: bdd mld div surface} (see Example \ref{ex: bdd number of mld div}).
\end{rem}

Theorem \ref{thm: bdd mld div surface} follows from Theorem \ref{nak smooth}, and
Theorem \ref{thm: bdd mld div number surface} follows from Theorem \ref{generalized nak smooth} and Theorem \ref{thm: bdd mld div surface}. We will need the following lemma to prove the above theorems.

\begin{lem}\label{lem: coefficient set}
Let $(X\ni x,B:=\sum_i b_iB_i)$ be an lc surface germ, where $B_i$ are distinct prime divisors. Let $\widetilde{f}: \widetilde{X}\to X\ni x$ be the minimal resolution, and we may write $K_{\widetilde{X}}+B_{\widetilde{X}}=\widetilde{f}^*(K_X+B)$ for some $\Rr$-divisor $B_{\widetilde{X}}:=\sum_j \widetilde{b_j}\widetilde{B_j}\geq 0$, where $\widetilde{B_j}$ are distinct prime divisors on $\widetilde{X}$. Let $I\in \Zz_{>0}$ such that $\det(X\ni x)\mid I$, then 
\begin{enumerate}
    \item $\{\sum_j n_j'\widetilde{b_j}-1>0 \mid n_j'\in\mathbb{Z}_{\geq 0}\}\subseteq \frac{1}{I}\{\sum_i n_ib_i-t>0\mid n_i\in\mathbb{Z}_{\geq 0}, t\leq I\}$, and 
    \item $\{\sum_j n_j'\widetilde{b_j}-1\geq 0 \mid n_j'\in\mathbb{Z}_{\geq 0}\}\subseteq \frac{1}{I}\{\sum_i n_ib_i-t\geq 0 \mid n_i\in\mathbb{Z}_{\geq 0}, \sum_i n_i>0, t\leq I\}$.
\end{enumerate}
\end{lem}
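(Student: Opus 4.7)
The plan is to prove both (1) and (2) simultaneously by producing, for each $n_j'\in\Zz_{\geq 0}$, an explicit representation
\[I\Bigl(\sum_j n_j'\widetilde{b_j}-1\Bigr)=\sum_i n_i b_i - t\]
with $n_i\in\Zz_{\geq 0}$ and $t\in\Zz$ satisfying $t\leq I$; positivity (resp.\ non-negativity) of the left side will transfer directly to the right, yielding (1) (resp.\ the main content of (2)), and $\sum_i n_i>0$ in the setting of (2) will need a separate check.

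To this end I would decompose the prime divisors $\widetilde{B_j}$ on $\widetilde{X}$ into two classes: strict transforms of the $B_i$ (for which $\widetilde{b_j}=b_i$, so $I\widetilde{b_j}=Ib_i$), and the $\widetilde{f}$-exceptional prime divisors $\{E_k\}$ (for which $\widetilde{b_k}=1-a_k$ with $a_k:=a(E_k,X,B)$). Since $\det(X\ni x)$ is a finite integer, $X\ni x$ is klt, so each $E_k$ is a smooth rational curve and the intersection matrix $M:=(E_k\cdot E_l)$ is negative definite with $|\det M|=\det(X\ni x)=:D$ dividing $I$. Intersecting the identity $K_{\widetilde{X}}+\widetilde{f}^{-1}_*B+\sum_k(1-a_k)E_k=\widetilde{f}^*(K_X+B)$ with each $E_l$ and using $K_{\widetilde{X}}\cdot E_l=w_l-2$ ($w_l:=-E_l^2$) gives the linear system
\[M\cdot\bigl((1-a_k)\bigr)_k=\bigl(2-w_l-\sum_i s_{i,l}b_i\bigr)_l,\qquad s_{i,l}:=\widetilde{f}^{-1}_*B_i\cdot E_l\in\Zz_{\geq 0}.\]

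Next I would invert: by \cite[Lemma 3.41]{KM98}, every entry of $M^{-1}$ is negative, so every entry of $IM^{-1}=(I/D)\cdot DM^{-1}$ lies in $\Zz_{\leq -1}$. Setting $u_{k,l}:=-(IM^{-1})_{k,l}\in\Zz_{\geq 1}$ yields
\[I(1-a_k)=\sum_l u_{k,l}\bigl(w_l-2+\sum_i s_{i,l}b_i\bigr)=T_k'+\sum_i N_{i,k}'b_i,\]
where $T_k':=\sum_l u_{k,l}(w_l-2)\in\Zz_{\geq 0}$ and $N_{i,k}':=\sum_l u_{k,l}s_{i,l}\in\Zz_{\geq 0}$; the analogous identity for strict transforms is trivial (with $T'=0$). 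Writing $n_j'$ as $n_i^{(1)}$ on the strict transform of $B_i$ and as $n_k^{(2)}$ on $E_k$ and summing,
\[I\sum_j n_j'\widetilde{b_j}=\sum_i\Bigl(In_i^{(1)}+\sum_k n_k^{(2)}N_{i,k}'\Bigr)b_i+\sum_k n_k^{(2)}T_k',\]
so setting $n_i:=In_i^{(1)}+\sum_k n_k^{(2)}N_{i,k}'\in\Zz_{\geq 0}$ and $t:=I-\sum_k n_k^{(2)}T_k'\in\Zz$ (with $t\leq I$) gives the desired equality. For the extra condition $\sum_i n_i>0$ in (2): if some $n_i^{(1)}>0$ we are done; otherwise some $n_k^{(2)}>0$, and since each $B_i$ passes through $x$ its strict transform meets the exceptional locus, so $s_{i,l_0}\geq 1$ for some $l_0$, whence $N_{i,k}'\geq u_{k,l_0}\geq 1$ and $n_i>0$.

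The main obstacle is establishing the strict positivity $u_{k,l}\geq 1$: this requires the arithmetic input $D\mid I$ (so that $IM^{-1}$ has integer entries) together with the geometric input that $M^{-1}$ has strictly negative entries for a connected negative-definite exceptional configuration \cite[Lemma 3.41]{KM98}. Only once both are in place is each $u_{k,l}$ a positive integer, which is what makes the coefficients $T_k',N_{i,k}'$ non-negative integers and also drives the $\sum_i n_i>0$ argument in (2).
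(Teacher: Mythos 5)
Your proposal is correct and follows essentially the same route as the paper: intersect $\widetilde{f}^*(K_X+B)$ with each exceptional curve, invert the negative-definite intersection matrix, and use $\det(X\ni x)\mid I$ together with \cite[Lemmas 3.40, 3.41]{KM98} to conclude that $-I\cdot(M^{-1})_{k,l}$ are positive integers, so that $I(1-a_k)$ is a non-negative integer combination of the $b_i$ plus a non-negative integer. Your explicit verification of $\sum_i n_i>0$ in part (2), via $\widetilde{f}^{-1}_*B_i\cdot E_{l_0}\geq 1$, is a detail the paper leaves implicit, and it is handled correctly.
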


\begin{proof}
Let $\dg$ be the dual graph of $\widetilde{f}$. Let $\{E_i\}_{1 \leq i\leq m}$ be the set of $\widetilde{f}$-exceptional divisors, and $w_i:=-E_i\cdot E_i$, $a_i:=a(E_i,X\ni x,B)$ for all $i$.

For each $1\leq j\leq m$, $(K_Y+f^{-1}_*B+\sum_{i=1}^m (1-a_i)E_i)\cdot E_{j}=0$, which implies that $\sum_{i=1}^m (a_i-1)E_i\cdot E_{j}=-{E_{j}}^2-2+\widetilde{f}^{-1}_* B \cdot E_{j}$,
or equivalently, 
$$\begin{pmatrix}
  E_1\cdot E_1&\cdots &E_m\cdot E_1 \\
  \vdots&\ddots & \vdots         \\
  E_1\cdot E_m&\cdots &E_m\cdot E_m \\
\end{pmatrix} 
\begin{pmatrix}
  a_1-1 \\
  \vdots \\
  a_m-1 \\
\end{pmatrix} 
 =
 \begin{pmatrix}
w_1-2+\widetilde{f}^{-1}_* B\cdot E_1\\
\vdots\\
w_m-2+\widetilde{f}^{-1}_* B\cdot E_m\\
\end{pmatrix}.$$
By \cite[Lemma 3.40]{KM98}, $(E_i\cdot E_{j})_{1\le i,j\le m}$ is a negative definite matrix. Let $(s_{ij})_{m\times m}$ be the inverse matrix of $(E_i\cdot E_{j})_{1\le i,j\le m}$. By \cite[Lemma 3.41]{KM98} and the assumption on $I$, we have $Is_{ij}\in \mathbb{Z}_{<0}$ for $1\leq i,j\leq m$. Thus for all $i$, 
\begin{align}\label{coefficient sof the minimal resolution}
1-a_i=\frac{1}{I}\sum_{j=1}^m (-Is_{ij})\cdot(w_{j}-2+\widetilde{f}^{-1}_* B\cdot E_{j}).
\end{align}
Since $w_{j}\geq 2$ for all $j$, (1) and (2) follows immediately from (\ref{coefficient sof the minimal resolution}) and the equation $B_{\widetilde{X}}=\sum_j \widetilde{b_j}\widetilde{B_j}=\widetilde{f}^{-1}_* B+\sum_{i=1}^m (1-a_i)E_i$. 
\end{proof}

\begin{proof}[Proof of Theorem \ref{thm: bdd mld div surface}]
By Lemma \ref{Terminal}, we may assume that $\mld(X\ni x, B)\leq 1$.

If $\mld(X\ni x, B)=\pld(X\ni x, B)$, then $a(E,X,0)\leq 1$ for some prime divisor $E$ over $X\ni x$ such that $a(E,X,B)=\mld(X\ni x, B)$. So we may assume that $\mld(X\ni x, B)\neq \pld(X\ni x, B)$.

By Lemma \ref{Resolution}, there exists a birational morphism $f: Y\to X\ni x$ which satisfies Lemma \ref{Resolution}(1)--(4). Let $\widetilde{f}:\widetilde{X}\to X$ be the minimal resolution of $X\ni x$, and $g: Y\to \widetilde{X}$ the natural morphism induced by $f$. We may write $K_{\widetilde{X}}+B_{\widetilde{X}}=\widetilde {f}^*(K_X+B)$ for some $\Rr$-divisor $B_{\widetilde{X}}:= \sum_j \widetilde{b_j}\widetilde{B_{j}}\ge0$ on $\widetilde{X}$, where $\widetilde{B_j}$ are distinct prime divisors. 

By Lemma \ref{lem: coefficient set}(1), we have $\{\sum_j n_j'\widetilde{b_j}-1>0 \mid n_j'\in\mathbb{Z}_{\geq 0}\}\subseteq \frac{1}{I}\{\sum_i n_ib_i-t>0 \mid n_i\in\mathbb{Z}_{\geq 0}, t\leq I\}\subseteq [\frac{\gamma}{I},\infty)$ by the assumption. Since the dual graph $\dg$ of $f$ contains only one $(-1)$-curve, there exists a closed point $\widetilde{x}\in \widetilde{X}$ such that $\Center_{\widetilde{X}} E=\widetilde{x}$ for any $g$-exceptional divisor $E$. Apply Theorem \ref{nak smooth} to the surface germ $(\widetilde{X}\ni \widetilde{x}, B_{\widetilde{X}})$, we are done.
\end{proof}

\begin{proof}[Proof of Theorem \ref{thm: bdd mld div number surface}]
By Lemma \ref{Terminal}, we may assume that $\mld(X\ni x, B)\leq 1$.

We may assume that $B\neq 0$, otherwise we may take $\widetilde{f}:\widetilde{X}\to X\ni x$ the minimal resolution, and $K_{\widetilde{X}}+B_{\widetilde{X}}=\widetilde{f}^*K_X$, where $B_{\widetilde{X}}$ is an snc divisor. Since $X\ni x$ is klt, all elements in $S$ are on $\widetilde{X}$, thus $|S|\leq N$, and $a(E,X,0)\leq 1$ for all $E\in S$. For the same reason, we may assume that not all elements of $S$ are on the minimal resolution $\widetilde{X}$ of $X\ni x$.

If $\mld(X\ni x, B)=0$, then by Lemma \ref{plt blow up and lc place}, there exists a unique exceptional divisor $E$ over $X\ni x$ such that $a(E,X,B)=0$. By Theorem \ref{thm: bdd mld div surface}, $a(E,X,0)\leq 2^{\lfloor 1+\frac{32{I^2}}{\gamma^2}+\frac{I}{\gamma}\rfloor}$. Thus we may assume that $\mld(X\ni x, B)>0$ from now on. Since $\sum_i n_ib_i\neq 1$ for any $n_i\in \Zz_{\ge 0}$, we have $\lfloor B\rfloor=0$, and $(X,B)$ is klt near $x\in X$, hence by \cite[Proposition 2.36]{KM98}, $S$ is a finite set.

Now, we may apply Lemma \ref{resolution for generalized nak} to the set $S$, there exists a birational morphism $f: Y\to X\ni x$ that satisfies Lemma \ref{resolution for generalized nak}(1)--(4). Let $\widetilde{f}:\widetilde{X}\to X$ be the minimal resolution of $X\ni x$, and $g: Y\to \widetilde{X}$ the natural morphism induced by $f$. We may write $K_{\widetilde{X}}+B_{\widetilde{X}}=\widetilde {f}^*(K_X+B)$ for some $\Rr$-divisor $B_{\widetilde{X}}:= \sum_j \widetilde{b_j}\widetilde{B_{j}}\ge0$ on $\widetilde{X}$, where $\widetilde{B_j}$ are distinct prime divisors on $\widetilde{X}$. 

By Lemma \ref{lem: coefficient set}(2), we have $\{\sum_j n_j'\widetilde{b_j}-1\geq 0 \mid n_j'\in\mathbb{Z}_{\geq 0}\}\subseteq \frac{1}{I}\{\sum_i n_ib_i-t\geq 0 \mid n_i\in\mathbb{Z}_{\geq 0}, \sum_i n_i>0 ,t\leq I\}\subseteq [\frac{\gamma}{I},\infty)$. Since the dual graph $\dg$ of $f$ contains only one $(-1)$-curve, there exists a closed point $\widetilde{x}\in \widetilde{X}$ such that $\Center_{\widetilde{X}} E=\widetilde{x}$ for any $g$-exceptional divisor $E$. Apply Theorem \ref{generalized nak smooth} to the surface germ $(\widetilde{X}\ni \widetilde{x}, B_{\widetilde{X}})$, we are done.
\end{proof}

To end this section, we provide some examples which show that Theorem \ref{generalized nak smooth} and Theorem \ref{thm: bdd mld div number surface} do not hold if we try to relax their assumptions. 

\medskip

Example \ref{ex: counterexample generealized nak smooth when B=0} below shows that Theorem \ref{generalized nak smooth} does not hold when $X\ni x$ is not klt.

\begin{ex}\label{ex: counterexample generealized nak smooth when B=0}
Let $X\ni x$ be an lc surface germ, such that $\mld (X\ni x)=0$ and the dual graph $\dg$ of the minimal resolution of $X\ni x$ is a cycle of smooth rational curves. Then both $|S|=|\{E\mid E\text{ is a prime divisor over }  X\ni x, a(E,X)=\mld(X\ni x)\}|$, and $\sup_{E\in S}\{a(E,X,0)\}$ are not bounded from above.
\end{ex}

The following example shows that Theorem \ref{generalized nak smooth} does not hold if $\sum_i n_i b_i=1$ for some $n_i\in \Zz_{\geq 0}$.

\begin{ex}\label{ex: generealized nak smooth coeff}
Let $\{b_i\}_{1\leq i\leq m}\subseteq (0,1]$, such that $\sum_{i=1}^m n_ib_i=1$ for some $m,n_i\in\Zz_{>0}$.

Let $(X\ni x):=(\mathbf{A}^2\ni 0)$, $D_{k,j}$ the Cartier divisor which is defined by the equation $x-y^k-y^{k+j}=0$, and $B_k:=\sum_{i=1}^m (b_i\sum_{j=1}^{n_i} D_{k+i,j})$ for any positive integer $k$ and $j$, we have $\mult_{x}B_k=\sum_i n_ib_i=1$ for each $k$.

For each $k$, we may construct a sequence of blow-ups $X_{k}\to X_{k-1}\to \cdots\to X_{1}\to X_{0}:=X$ with the data $(f_i,F_i,x_i\in X_i)$, such that $x_{i-1}\in F_{i-1}$ is the intersection of the strict transforms of $D_{k+i,j}$ on $F_{i-1}$ for all $i,j$. Then $a(F_i,X,B_k)=1=\mld(X\ni x, B_k)$ for any $k$ and $1\le i\le k$. Thus both $|S_k|=|\{E\mid E\text{ is a prime divisor over }  X\ni x, a(E,X, B_k)=\mld(X\ni x, B_k)\}|$ and $\sup_{E\in S_k}\{a(E,X,0)\}$ are not bounded from above as $|S_k|\geq k$, and $a(F_k,X,0)\ge k$ for each $k$.
\end{ex}

The following example shows that Theorem \ref{thm: bdd mld div number surface} does not hold if we do not fix the germ, even when we bound the determinant of the surface germs.


\begin{ex}\label{ex: bdd number of mld div}
Let $\{(X_k\ni x_k)\}_{k\geq 1}$ be a sequence of surface germs, such that each $X_k\ni x_k$ is a Du Val singularity of type $D_{k+3}$ (\cite[Theorem 4.22]{KM98}). We have $\mathrm{det}(X_k\ni x_k)=4$, while $|S_k|=k+3$ for each $k$.

\end{ex}

\section{An equivalent conjecture for MLDs on a fixed germ}


\begin{defn}
  Let $X$ be a normal variety, and $B:=\sum b_iB_i$ an $\Rr$-divisor on $X$, where $B_i$ are distinct prime divisors. We define $||B||:=\max_i\{|b_i|\}$. 
 Let $E$ be a prime divisor over $X$, and $Y\to X$ a birational model, such that $E$ is on $Y$. We define $\mult_E B_i$ to be the multiplicity of the strict transform of $B_i$ on $Y$ along $E$ for each $i$, and $\mult_E B:=\sum_i b_i\mult_E B_i$.
 \end{defn}
 
 Conjecture \ref{conj: inversion of stalibty for divisors computing MLDs} could be regarded as an inversion of stability type conjecture for divisors which compute MLDs, see \cite[Main Proposition 2.1]{BirkarSho10}, \cite[Theorem 5.10]{CH20} for other inversion of stability type results.

\begin{conj}\label{conj: inversion of stalibty for divisors computing MLDs}
	Let $\Ii\subseteq [0,1]$ be a finite set, $X$ a normal quasi-projective variety, and $x\in X$ a closed point. Then there exists a positive real number $\tau$ depending only on $\Ii$ and $x\in X$ satisfying the following. 
	
	Assume that $(X\ni x,B)$ and $(X\ni x,B')$ are two lc germs such that
	\begin{enumerate}
	    \item $B'\le B$,  $||B-B'||<\tau,B\in\Ii$, and
	    \item $a(E,X,B')=\mld(X\ni x,B')$ for some prime divisor $E$ over $X\ni x$.
	\end{enumerate}
	Then $a(E,X,B)=\mld(X\ni x,B)$.
\end{conj}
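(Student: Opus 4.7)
The plan is to prove Conjecture \ref{conj: inversion of stalibty for divisors computing MLDs} by contradiction, using Nakamura's conjecture on the fixed germ $X\ni x$ as the key input; in dimension $2$ and for klt germs all ingredients are provided by Theorems \ref{thm: nak conj dcc not fix germ} and \ref{thm: bdd mld div surface}, making the argument unconditional in that setting.

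Suppose the conclusion fails. Then there exist $\tau_n \searrow 0$ and lc germs $(X\ni x, B_n), (X\ni x, B_n')$ with $B_n'\le B_n$, $B_n\in \Ii$, $\|B_n-B_n'\|<\tau_n$, together with prime divisors $E_n$ over $X\ni x$ satisfying $a(E_n,X,B_n')=\mld(X\ni x,B_n')$ yet $a(E_n,X,B_n)>\mld(X\ni x,B_n)$. Choose $F_n$ computing $\mld(X\ni x,B_n)$. Since $\Ii$ is finite and the germ is fixed, Theorem \ref{thm: nak conj dcc not fix germ} produces $a(F_n,X,0)\le N$ uniformly, so after a subsequence we may take $F_n=F$ constant.

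Set $\gamma:=\min(\Ii\cap(0,1])>0$. The lc inequality $\mult_F B_n\le a(F,X,0)$ together with every coefficient of $B_n$ being $\ge\gamma$ bounds both the number of components $B_{n,i}$ of $B_n$ with $\mult_F B_{n,i}>0$ and each such multiplicity. Hence $\mult_F B_n$ takes only finitely many values; after a further subsequence, $\beta:=a(F,X,B_n)=\mld(X\ni x,B_n)$ is constant, and $|\mult_F(B_n-B_n')|\le C\tau_n\to 0$, so $a(F,X,B_n')\to\beta$. The minimizing property $a(E_n,X,B_n')\le a(F,X,B_n')$ combined with the identity $a(\bullet,X,B)-a(\bullet,X,B')=\mult_\bullet(B'-B)$ yields
\begin{equation*}
0 < a(E_n,X,B_n)-\beta \le \mult_F(B_n-B_n')-\mult_{E_n}(B_n-B_n') \le \mult_F(B_n-B_n') \longrightarrow 0.
\end{equation*}

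The main obstacle is upgrading this vanishing to an outright equality, contradicting the strict inequality. For this we must bound $a(E_n,X,0)$ uniformly. By Lemma \ref{DCC set gap} applied to $\Ii$, the coefficient sets of $B_n'$ in $\Ii_{\tau_n}\cap[0,1]$ enjoy a common positive gap for $\tau_n$ small. Feeding this into Theorem \ref{thm: bdd mld div surface} (which applies to klt germs $X\ni x$, whose determinant is a positive integer) gives $a(E_n,X,0)\le 2^{N_0}$ uniformly; thus $\{E_n\}$ lies in a finite set of valuations over $X\ni x$, so $a(E_n,X,B_n)$ lies in a finite subset of $\Rr$, and the strict inequality $a(E_n,X,B_n)>\beta$ forces a positive gap above $\beta$, contradicting $a(E_n,X,B_n)\to\beta$. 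The converse implication --- that Conjecture \ref{conj: inversion of stalibty for divisors computing MLDs} yields the ACC for MLDs on the fixed germ --- follows by a routine diagonal subsequence argument, completing the equivalence announced in Section $7$.
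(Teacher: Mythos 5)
Your overall scheme --- reducing the problem to a quantitative gap via the inequality $0 < a(E_n,X,B_n)-\mld(X\ni x,B_n) \le \mult_{F}(B_n-B_n')\to 0$ --- is essentially the computation in the paper's Proposition \ref{nak equivalence conjecture}. (Note the paper states \ref{conj: inversion of stalibty for divisors computing MLDs} only as a conjecture; what it proves is the equivalence with Conjecture \ref{conjecture: DCCmustatanakamura}, and the implication you are after is obtained there by combining Nakamura's conjecture with the discreteness theorem \cite[Theorem 1.1]{Kawakita14}.) However, your way of manufacturing the gap has two genuine flaws. First, the deduction ``$a(F_n,X,0)\le N$ uniformly, hence after a subsequence $F_n=F$ is constant'' (and likewise ``$a(E_n,X,0)\le 2^{N_0}$, hence $\{E_n\}$ lies in a finite set of valuations'') is false: already over a smooth surface germ there are infinitely many divisorial valuations with $a(E,X,0)=3$ (blow up $x$, then any of the infinitely many points of the exceptional curve). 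For $F_n$ this is repairable, since all you actually use is that the values $a(F_n,X,B_n)$ and $\mult_{F_n}(B_n-B_n')$ range in a finite, respectively uniformly small, set, and that does follow from $a(F_n,X,0)\le N$, $B_n\in\Ii$ finite, and log canonicity.

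The second flaw is fatal: the uniform bound $a(E_n,X,0)\le 2^{N_0}$ is not available. Theorem \ref{thm: bdd mld div surface} (like Theorem \ref{nak smooth}) only produces \emph{some} divisor computing $\mld(X\ni x,B_n')$ with bounded $a(\cdot,X,0)$, whereas your $E_n$ is an \emph{arbitrary} divisor computing $\mld(X\ni x,B_n')$ --- that is exactly what hypothesis (2) of the conjecture hands you, and the conclusion must hold for that same $E_n$. To control \emph{all} mld-computing divisors you would need Theorems \ref{generalized nak smooth} and \ref{thm: bdd mld div number surface}, whose hypothesis $\sum_i n_ib_i'\neq t$ for integers $t$ need not hold for the perturbed coefficients $b_i'\in\Ii_{\tau_n}$; Example \ref{ex: generealized nak smooth coeff} shows that when $\sum_i n_ib_i'=1$ the mld-computing divisors can have unbounded $a(\cdot,X,0)$. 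Without that bound your finiteness-of-values argument collapses, and what is actually needed at the last step is precisely the discreteness of $\{a(E,X,B)\mid E \text{ over } X\ni x,\ B\in\Ii\}$ below a fixed constant --- i.e.\ Kawakita's theorem, the external (characteristic-zero) input the paper invokes and which your argument does not reprove. Your closing remark that the converse implication is ``a routine diagonal subsequence argument'' also undersells it; the paper's argument is a direct comparison with $(1-t)B$, not a compactness argument.
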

\begin{prop}\label{nak equivalence conjecture}
For any fixed $\Qq$-Gorenstein germ $X\ni x$, Conjecture \ref{conj: inversion of stalibty for divisors computing MLDs} is equivalent to Conjecture \ref{conjecture: DCCmustatanakamura}, thus Conjecture \ref{conj: inversion of stalibty for divisors computing MLDs} is equivalent to the ACC conjecture for MLDs.
\end{prop}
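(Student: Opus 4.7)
I plan to prove two implications: \textbf{(A)} Conjecture~\ref{conj: inversion of stalibty for divisors computing MLDs} implies Conjecture~\ref{conjecture: DCCmustatanakamura}, and \textbf{(B)} the converse. Combined with the Musta\c{t}\u{a}--Nakamura--Kawakita result cited in the introduction (that Conjecture~\ref{conjecture: DCCmustatanakamura} is equivalent to the ACC for MLDs on a fixed germ), this establishes the full equivalence stated in the proposition.

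For \textbf{(A)}, let $\tau$ be given by Conjecture~\ref{conj: inversion of stalibty for divisors computing MLDs}, fix $\epsilon := \tau/2$, and for any lc germ $(X \ni x, B)$ with $B \in \Ii$ set $B' := (1-\epsilon)B$. Then $B' \le B$, $\|B - B'\| \le \epsilon < \tau$, and $(X \ni x, B')$ is klt. Let $E$ be a prime divisor over $X \ni x$ with $a(E, X, B') = \mld(X \ni x, B')$; such $E$ exists by the standard achievability of the MLD for klt germs. Rewriting $a(E, X, B') = \epsilon\, a(E, X, 0) + (1-\epsilon) a(E, X, B)$ and using $a(E, X, B) \ge 0$ together with the trivial bound $\mld(X \ni x, B') \le \dim X$, I obtain $a(E, X, 0) \le \mld(X \ni x, B')/\epsilon \le 2 \dim X/\tau$. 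Conjecture~\ref{conj: inversion of stalibty for divisors computing MLDs} then forces $a(E, X, B) = \mld(X \ni x, B)$, so Conjecture~\ref{conjecture: DCCmustatanakamura} holds with $N := 2\dim X/\tau$.

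For \textbf{(B)}, assume Conjecture~\ref{conjecture: DCCmustatanakamura} with bound $N$. Let $r \in \Zz_{>0}$ be such that $rK_X$ is Cartier near $x$, and set $\gamma := \min(\Ii \setminus \{0\}) > 0$ (the case $\Ii = \{0\}$ is trivial). The first step is to show that $\Mld(X \ni x, \Ii) := \{\mld(X \ni x, B) : B \in \Ii,\, (X,B) \text{ lc near } x\}$ is \emph{finite}: Nakamura supplies $E_B$ with $a(E_B, X, 0) \in \tfrac{1}{r}\Zz \cap [0, N]$ (finite) and $\mld(X \ni x, B) = a(E_B, X, 0) - \mult_{E_B} B$, while $\mult_{E_B} B = \sum b_i \mult_{E_B} B_i$ lies in $\{\sum b_i m_i : b_i \in \Ii,\, m_i \in \Zz_{\ge 0},\, \sum b_i m_i \le N\}$, which is finite because $b_i \ge \gamma$ forces $\sum m_i \le N/\gamma$. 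Let $\delta > 0$ be the minimum positive gap in $\Mld(X \ni x, \Ii)$, and choose $\tau < \delta\gamma/N$.

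Given $B, B', E$ as in Conjecture~\ref{conj: inversion of stalibty for divisors computing MLDs}, I pick a Nakamura divisor $E_0$ for $B$ and estimate $\mult_{E_0}(B - B') \le \tau \sum_i \mult_{E_0} B_i \le \tau\, a(E_0, X, 0)/\gamma \le \tau N/\gamma < \delta$, which yields $\mld(X \ni x, B) \le \mld(X \ni x, B') \le a(E_0, X, B') = \mld(X \ni x, B) + \mult_{E_0}(B - B') < \mld(X \ni x, B) + \delta$. To pass from this to $a(E, X, B) = \mld(X \ni x, B)$, I plan to analyze the concave, non-decreasing, piecewise-affine function $\phi(t) := \mld(X \ni x,\, (1-t)B + tB')$ on $[0, 1]$, whose total variation $\phi(1) - \phi(0)$ is less than $\delta$; since $a(E, X, B) = \mld(X \ni x, B') - \mult_E(B - B')$, the desired equality $a(E, X, B) = \mld(X \ni x, B)$ reduces to $\mult_E(B - B') = \phi(1) - \phi(0)$, i.e.\ to $\phi$ being linear on $[0, 1]$ with slope $\mult_E(B - B')$. \textbf{The main obstacle} is to rule out interior breakpoints in the piecewise-affine decomposition of $\phi$: I plan to use the discreteness of $\tfrac{1}{r}\Zz$ and the finiteness and gap $\delta$ of $\Mld(X \ni x, \Ii)$, applied at the endpoint values $\phi(0), \phi(1)$ and at any putative intermediate break, together with the Nakamura bound on the ``leading'' divisors of each piece, to force a single affine piece and thereby the identity $\mult_E(B - B') = \phi(1) - \phi(0)$.
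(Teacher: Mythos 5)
Your direction (A) is correct and is essentially the paper's argument: the paper takes $B'=(1-t)B$ with $t=\min\{1,\tau\}$ and extracts the bound $a(E,X,0)\le(1+\frac1t)\mld(X\ni x,0)$ in the same way.

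Direction (B) has a genuine gap, and it is exactly the step you flag as ``the main obstacle.'' The quantity you need is not the minimum gap $\delta$ of the finite set $\Mld(X\ni x,\Ii)$ (your finiteness argument for that set via Nakamura is fine), but a gap of a different kind: a $\delta>0$ such that for every fixed $B\in\Ii$ and every divisor $E$ over $X\ni x$ with $a(E,X,B)>\mld(X\ni x,B)$ one has $a(E,X,B)\ge\mld(X\ni x,B)+\delta$. This is a discreteness statement about the set $\{a(E,X,B)\}_E$ for a \emph{single} boundary, not about how $\mld$ varies with $B$, and it does not follow from the finiteness of $\Mld(X\ni x,\Ii)$. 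The paper imports it as an external input, namely Kawakita's discreteness theorem \cite[Theorem 1.1]{Kawakita14} (this is also why the paper's Remark restricts to characteristic zero); with that gap in hand the proof is a three-line contradiction comparing $a(E_0,X,B')$ and $a(E,X,B')$, with no need for the function $\phi$. Your substitute tools cannot close the hole: the intermediate boundaries $(1-t)B+tB'$ have coefficients outside $\Ii$ (indeed outside any fixed finite set), so neither the gap of $\Mld(X\ni x,\Ii)$ nor a uniform Nakamura bound applies along the segment, and there is no justification that $\phi$ is piecewise affine with controlled breakpoints. Moreover, even granting that $\phi$ is affine on $[0,1]$ with slope $s=\phi(1)-\phi(0)$, the conclusion does not follow: since $\psi_E(t):=a(E,X,(1-t)B+tB')$ is an affine majorant of $\phi$ touching it at $t=1$, affineness only yields $\mult_E(B-B')\le s$, i.e.\ $a(E,X,B)\ge\mld(X\ni x,B)$, which is the trivial inequality; two divisors can compute $\phi(1)$ with different slopes, and ruling out the one with the smaller slope is precisely the content of the conjecture. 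So the reduction to ``no interior breakpoints'' is both unproved and insufficient; you need the Kawakita-type gap (or an independent proof of it) to complete this direction.
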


\begin{rem}
We need to work in characteristic zero since we need to apply \cite[Theorem 1.1]{Kawakita14}.
\end{rem}

\begin{proof}

Suppose that Conjecture \ref{conj: inversion of stalibty for divisors computing MLDs} holds. Let $t:=\min\{1,\tau\}>0$. Then $||(1-t)B-B||<\tau$. Let $E$ be a prime divisor over $X\ni x$, such that $a(E,X,(1-t)B)=\mld(X\ni x,(1-t)B)$. Then
$\mld (X\ni x,0)\ge a(E,X,(1-t)B)=a(E,X,B)+t\mult_{E}B\ge t\mult_{E}B$, and
$$\mult_{E}B\le \frac{1}{t}\mld(X\ni x,0).$$

By assumption, $\mld(X\ni x,0)\ge \mld(X\ni x,B)=a(E,X,B)=a(E,X,0)-\mult_{E}B$. Hence
$$a(E,X,0)\le (1+\frac{1}{t})\mld(X\ni x,0),$$
and Conjecture \ref{conjecture: DCCmustatanakamura} holds.

\medskip

Suppose that Conjecture \ref{conjecture: DCCmustatanakamura} holds, then there exists a positive real number $N$ which only depends on $\Ii$ and $X\ni x$, such that 
$a(E_0,X,0)\le N$ for some $E_0$ satisfying $a(E_0,X,B)=\mld(X\ni x,B)$. In particular, $\mult_{E_0}B=-a(E_0,X,B)+a(E_0,X,0)\le N$.

By \cite[Theorem 1.1]{Kawakita14}, there exists a positive real number $\delta$ which only depends on $\Ii$ and $X\ni x$, such that $a(E,X,B)\ge \mld(X\ni x,B)+\delta$ for any $(X\ni x,B)$ and prime divisor $E$ over $X\ni x$ such that $B\in \Ii$ and $a(E,X,B)>\mld(X\ni x,B)$.

We may assume that $\Ii\setminus \{0\}\neq\emptyset$. Let $t:=\frac{\delta}{2N}$ and $\tau:=t\cdot \min\{\Ii\setminus\{0\}\}$. We claim that $\tau$ has the required properties. 

For any $\Rr$-divisor $B'$, such that $||B'-B||<\tau$, we have $B-B'<tB$. Let $E$ be a prime divisor over $X\ni x$, such that $a(E,X,B')=\mld (X\ni x,B')$. Suppose that
$a(E,X,B)>\mld(X\ni x,B)$. Then $a(E,X,B)\ge \mld(X\ni x,B)+\delta=a(E_0,X,B)+\delta$. Thus
\begin{align*}
&a(E_0,X,0)-\mult_{E_0}B'=a(E_0,X,B')\\
\ge &a(E,X,B')\ge a(E,X,B)\\
\ge &a(E_0,X,B)+\delta=a(E_0,X,0)-\mult_{E_0}B+\delta.
\end{align*}
Hence
$$\delta\le \mult_{E_0}(B-B')\le t\mult_{E_0} B\le tN=\frac{\delta}{2},$$ 
a contradiction.
\end{proof}

\begin{prop}
For any fixed $\Qq$-Gorenstein germ $X\ni x$, Conjecture \ref{conjecture: DCCmustatanakamura} implies Conjecture \ref{conj: inversion of stalibty for divisors computing MLDs} holds for any DCC set $\Ii$ when $B$ has only one component.
\end{prop}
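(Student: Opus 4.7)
My plan is to adapt the proof of Proposition~\ref{nak equivalence conjecture} to the DCC setting, using the single-component hypothesis to recover the uniformity otherwise lost when passing from a finite to a DCC coefficient set. Since $\Ii\subseteq[0,1]$ is DCC, $b_{\min}:=\min(\Ii\setminus\{0\})$ is a positive real number. Write $B=bC$, where $C$ is the unique component of $B$ and $b\in\Ii$.

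For each such $b$ with $(X\ni x,bC)$ lc, I apply Conjecture~\ref{conjecture: DCCmustatanakamura} to the finite singleton $\{b\}$ to obtain a constant $N_b$ and a prime divisor $E_0=E_0(b)$ with $a(E_0,X,bC)=\mld(X\ni x,bC)$ and $a(E_0,X,0)\le N_b$; using $\mld(X\ni x,bC)\ge 0$, this also gives $\mult_{E_0}C\le N_b/b_{\min}$. In parallel, \cite[Theorem 1.1]{Kawakita14} applied to $\{b\}$ supplies a gap $\delta_b>0$ such that $a(E,X,bC)\ge\mld(X\ni x,bC)+\delta_b$ whenever $a(E,X,bC)>\mld(X\ni x,bC)$. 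Assuming uniform bounds $N:=\sup_{b\in\Ii}N_b<\infty$ and $\delta:=\inf_{b\in\Ii}\delta_b>0$ (addressed below), set $\tau:=\delta\, b_{\min}/(2N)$; then for $B'=b'C\le B$ with $||B-B'||<\tau$ and any $E$ computing $\mld(X\ni x,B')$, expanding $a(\cdot,X,B')=a(\cdot,X,B)+\mult_{(\cdot)}(B-B')$ inside the inequality $a(E,X,B')\le a(E_0,X,B')$ and using $\mult_E(B-B')\ge 0$ gives
\[
a(E,X,B)-a(E_0,X,B)\;\le\;\mult_{E_0}C\cdot(b-b')\;\le\;\frac{N}{b_{\min}}\cdot\tau\;=\;\frac{\delta}{2}.
\]
Were $a(E,X,B)>\mld(X\ni x,B)$, the gap estimate would force $a(E,X,B)-a(E_0,X,B)\ge\delta$, a contradiction; hence $a(E,X,B)=\mld(X\ni x,B)$, as desired.

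The main obstacle, and where the one-component hypothesis enters crucially, is to establish the uniform bounds on $N_b$ and $\delta_b$. The function $b\mapsto\mld(X\ni x,bC)$ is concave and piecewise linear on the interval of $b$-values for which $(X,bC)$ is lc, each linear piece being computed by a common essential minimizer $E$ of fixed $a(E,X,0)$ and fixed $\mult_E C$; Conjecture~\ref{conjecture: DCCmustatanakamura} bounds $a(E,X,0)$ on each piece, while $X\ni x$ being $\Qq$-Gorenstein (together with $C$ being $\Qq$-Cartier) forces the pairs $(a(E,X,0),\mult_E C)$ to lie in a rational lattice of bounded denominator, so only finitely many essential minimizers can occur. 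This yields $N<\infty$ and finitely many transition values $t_1<\cdots<t_k$ at which the minimizer changes. Since $\Ii$ is DCC, no $t_j$ can be an accumulation point of $\Ii$ from above, so $\inf\{b-t_j:b\in\Ii,\,b>t_j\}>0$ for each $j$; taking the minimum over the finitely many $j$ produces the uniform positive $\delta$, and hence the uniform $\tau$ depending only on $\Ii$ and $X\ni x$.
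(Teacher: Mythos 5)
Your second paragraph is a correct adaptation of the argument in Proposition~\ref{nak equivalence conjecture} \emph{provided} the uniform constants $N=\sup_{b}N_b<\infty$ and $\delta=\inf_{b}\delta_b>0$ exist, but the third paragraph, which is supposed to produce them, has two genuine gaps. First, the finiteness of essential minimizers is argued circularly: Conjecture~\ref{conjecture: DCCmustatanakamura} applied to the singleton $\{b\}$ only bounds $a(E,X,0)$ by a constant $N_b$ depending on $b$, so your bound ``on each piece'' depends on the piece; without already knowing that there are finitely many pieces (equivalently, a bound uniform in $b$) the lattice observation gives nothing, since for fixed $b$ the affine condition $a(E,X,0)-b\cdot\mult_EC=\mld(X\ni x,bC)$ meets the lattice in an infinite set, and concavity bounds the supporting slopes only on compact subsets of the open lc interval. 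Second, and more decisively, the uniform gap $\delta=\inf_{b\in\Ii}\delta_b>0$ simply does not exist for DCC sets: a DCC set may accumulate at a point \emph{from below}, and if $t^*$ is a kink of the concave function $b\mapsto\mld(X\ni x,bC)$ and $b_k\nearrow t^*$ in $\Ii$, the minimizer of the adjacent linear piece has log discrepancy exceeding $\mld(X\ni x,b_kC)$ by an amount tending to $0$, so $\delta_{b_k}\to 0$. Concretely, for $X=\mathbf{A}^2$, $C=\{y^2=x^3\}$ and $\Ii=\{\frac34-\frac1k\mid k\ge 2\}$, the $(2,3)$-weighted blow-up $E$ satisfies $a(E,X,b_kC)-\mld(X\ni x,b_kC)=\frac4k$. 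Your remark that no transition value is an accumulation point of $\Ii$ from above is true but guards against the wrong direction; it is accumulation from below at a kink that destroys the inequality $\tau N/b_{\min}<\delta_b$ on which your second paragraph relies.

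The statement is nonetheless true, and the paper's proof avoids uniform constants altogether by a compactness argument: assuming failure, one extracts (using the DCC, which forbids only strictly decreasing sequences) counterexamples with $t_i\in\Ii$ increasing to a limit $t$ and $t_i'<t_i$, $t_i'\to t$; Proposition~\ref{nak equivalence conjecture} applied to the finite set $\{t\}$ shows that for $i\gg 0$ the divisor $E_i$ computing $\mld(X\ni x,t_i'B_i)$ also computes $\mld(X\ni x,tB_i)$, and since $s\mapsto a(E_i,X,sB_i)$ is affine while $s\mapsto\mld(X\ni x,sB_i)$ is concave and the two agree at $s=t_i'$ and $s=t$, they agree at the intermediate value $s=t_i$, a contradiction. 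If you wish to keep your structural picture, the correct uniform quantity is not a log-discrepancy gap but a coefficient gap: any divisor computing the mld at an interior point of a linear piece computes it on the whole closed piece, so it would suffice to choose $\tau$ smaller than $\inf\{b-t_j\mid b\in\Ii,\ b>t_j\}$ over all transition values $t_j$; but that still presupposes the finiteness of the pieces, which your argument does not establish.
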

\begin{proof}
Otherwise, there exist two sequence of germs $\{(X\ni x,t_iB_i)\}_i$ and $\{(X\ni x,t_i'B_i)\}_i$, and prime divisors $E_i$ over $X\ni x$, such that 
\begin{itemize}
    \item  for each $i$, $B_i$ is a prime divisor on $X$, $t_i\in \Ii$ is increasing, and $t_i'<t_i$,
    \item $t:=\lim_{i\to \infty} t_i=\lim_{i\to \infty}t_i'$,
    \item $a(E_i,X, t_i'B_i)=\mld(X\ni x, t_i'B_i)$, and
    \item $a(E_i,X,t_iB_i)\neq \mld (X\ni x,t_iB_i)$.
\end{itemize}
By Proposition \ref{nak equivalence conjecture}, there exists a positive real number $\tau$ which only depends on $t$ and $X\ni x$, such that if $|t-t_i'|<\tau$, then $a(E_i,X,tB_i)=\mld(X\ni x,tB_i)$. Since $a(E_i,X,t_{*}B_i)$ is a linear function respect to the variable $t_{*}$, we have $a(E_i,X,t_iB_i)=\mld(X\ni x,t_iB_i)$, a contradiction.
\end{proof}

\medskip

To end this section, we provide some examples which show that Theorem \ref{conj: inversion of stalibty for divisors computing MLDs} does not hold for surface germs $(X\ni x,B)$ in general if either $\Ii$ is a DCC set (see Example \ref{ex:han conj 1}), or we do not fix $X\ni x$ (see Example \ref{ex:han conj 2}).

\medskip

Before we give the following two examples, we fix some notations first. Let $X\ni x$ be a cyclic quotient singularity defined by $uv-w^3=0$, $\widetilde{f}: \widetilde{X}\to X\ni x$ the minimal resolution, and $\dg$ the dual graph of $\widetilde{f}$. Assume that $\dg$ consists of two smooth rational curves $E_1$ and $E_2$, and $E_1\cdot E_2=1$. Let $B_1$ be the curve defined by $\{u=0,w=0\}$, $B_2$ the curves defined by $\{v=0,w=0\}$, and $\widetilde{B_1}, \widetilde{B_2}$ the corresponding strict transforms of $B_1,B_2$ on $\widetilde{X}$. We have $\widetilde{B_1}\cdot E_2=\widetilde{B_2}\cdot E_1=0$, and $\widetilde{B_1}\cdot E_1=B_2\cdot E_2=1$ (see Figure \ref{fig:counter example for 8.4}). Let $B:=b_1B_1+b_2B_2$ (respectively $B':=b_1'B_1+b_2'B_2$), $a_i:=a(E_i,X\ni x,B)$ (respectively $a_i':=a(E_i,X\ni x, B')$), and $w_i:=-E_i\cdot E_i$ for $i=1,2$.

\begin{figure}[ht]
\begin{tikzpicture}
         \draw (1.5,0.2) circle (0.1);
         \node [above] at (1.5,0.3)
         {\footnotesize$\widetilde{B_1}$};
         \draw (1.6,0.2)--(2.4,0);
         \draw (2.5,0)  circle  (0.1); 
         \node [below] at (2.5,-0.1) {\footnotesize$E_1$};
         \draw (2.6,0)--(3.4,0);
         \draw (3.5,0) circle (0.1);
         \draw (3.5,0) circle (0.1);
         \node [below] at (3.5,-0.1) {\footnotesize$E_2$}; 
         \draw  (3.6,0)--(4.4,0.2);
         \draw (4.5,0.2) circle (0.1);
         \node [above] at (4.5,0.3)
         {\footnotesize$\widetilde{B_2}$};
         
\end{tikzpicture}
 \caption{The dual graph of $\widetilde{f}$}
    \label{fig:counter example for 8.4}
\end{figure}
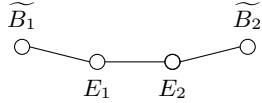


\begin{ex}\label{ex:han conj 1}
For any positive real number $\tau$, we may choose $k\in\Zz_{>0}$ such that $\frac{1}{k}< \tau$. Let
\begin{itemize}
\item $\Ii=\{\frac{1}{2}-\frac{1}{m}\mid m\in\Zz_{>0}\}$,
\item $w_i=2$ for all $1\leq i\leq2$,
\item $b_1=\frac{1}{2}$, $b_2=\frac{1}{2}-\frac{1}{k}$, and
\item $b_1'=\frac{1}{2}-\frac{1}{k}$, $b_2'=\frac{1}{2}-\frac{1}{k+1}$.
\end{itemize}
Since $K_{\widetilde{X}}+B_{\widetilde{X}}=\widetilde{f}^*(K_X+B)$ (respectively $K_{\widetilde{X}}+B_{\widetilde{X}}'=\widetilde{f}^*(K_X+B')$), and $B_{\widetilde{X}}$ (respectively $B_{\widetilde{X}}'$) is an snc $\Qq$-divisor with all coefficients$\leq \frac{1}{2}$, $(\widetilde{X},B_{\widetilde{X}})$ (respectively $(\widetilde{X},B_{\widetilde{X}}')$) is canonical, and $\mld(X\ni x,B)=\pld(X\ni x, B)$ (respectively $\mld(X\ni x,B')=\pld(X\ni x, B')$).

Now $a_1=\frac{1}{2}+\frac{1}{3k}<a_2=\frac{1}{2}+\frac{2}{3k}$, while $a_1'=\frac{1}{2}+\frac{2}{3k}-\frac{1}{3k+3}>a_2'=\frac{1}{2}+\frac{2}{3k+3}-\frac{1}{3k}$, we get the desired counterexample for Conjecture \ref{conj: inversion of stalibty for divisors computing MLDs} when $\Ii$ is a DCC set. We remark that we can not even find a $\tilde{f}$-exceptional divisor $E_i$ such that $a_i=\mld(X\ni x, B)$ and $a_i'=\mld(X\ni x, B')$ in this example.
\end{ex}

\begin{ex}\label{ex:han conj 2}
For any positive real number $\tau$, we may choose $k\in\Zz_{>0}$ such that $\frac{1}{k}< \tau$. For any fixed positive integer $n\geq 2$, let
\begin{itemize}
\item $\Ii=\{\frac{1}{2}\}$,
\item $w_1=2+2k,w_2=1+2k$, 
\item $b_1=b_2=\frac{1}{2}$, and
\item $b_1'=\frac{1}{2}-\frac{1}{k}$, $b_{2'}=\frac{1}{2}-\frac{1}{2k+1}$.
\end{itemize}
Since $K_{\widetilde{X}}+B_{\widetilde{X}}=\widetilde{f}^*(K_X+B)$ (respectively $K_{\widetilde{X}}+B_{\widetilde{X}}'=\widetilde{f}^*(K_X+B')$), and $B_{\widetilde{X}}$ (respectively $B_{\widetilde{X}}'$) is an snc $\Qq$-divisor with all coefficients$\leq \frac{1}{2}$, thus $(\widetilde{X},B_{\widetilde{X}})$ (respectively $(\widetilde{X},B_{\widetilde{X}}')$) is canonical, and $\mld(X\ni x,B)=\pld(X\ni x, B)$ (respectively $\mld(X\ni x,B')=\pld(X\ni x, B')$).

Now $a_1=\frac{2k+2}{2(2k+2)(2k+1)-2}=\mld(X\ni x, B)<a_2=\frac{2k+3}{2(2k+2)(2k+1)-2}$, while $a_1'=a_1+\frac{\frac{2k+1}{k}+\frac{1}{2k+1}}{(2k+2)(2k+1)-1}>a_2'=a_2+\frac{\frac{1}{k}+\frac{2k+2}{2k+1}}{(2k+2)(2k+1)-1}=\mld(X\ni x, B')$. We get the desired counterexample for Conjecture \ref{conj: inversion of stalibty for divisors computing MLDs} when the germ $X\ni x$ is not fixed. We remark that we can not even find a $\tilde{f}$-exceptional divisor $E_i$ such that $a_i=\mld(X\ni x, B)$ and $a_i'=\mld(X\ni x, B')$ in this example.
\end{ex}

It would also be interesting to ask the following:
\begin{conj}
	Let $\Ii\subseteq [0,1]$ be a finite set and $X\ni x$ a germ. Then there exists a positive real number $\tau$ depending only on $\Ii$ and $X\ni x$ satisfying the following. 
	
	Assume that 
	\begin{enumerate}
	    \item $B'\le B$,  $||B-B'||<\tau,B\in\Ii$, and
	    \item $K_X+B'$ is $\Rr$-Cartier.
	\end{enumerate}
	Then $K_X+B$ is $\Rr$-Cartier.
\end{conj}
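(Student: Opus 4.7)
The plan is to reduce the conjecture to a statement about the local class group $\mathrm{Cl}(X,x)$. The key observation is that $K_X+B=(K_X+B')+(B-B')$, so since $K_X+B'$ is $\Rr$-Cartier by assumption, $K_X+B$ is $\Rr$-Cartier if and only if $D:=B-B'\geq 0$ is $\Rr$-Cartier. First I would choose $\tau<\min(\Ii\setminus\{0\})$, which forces $\Supp B=\Supp B'$; writing $B=\sum_i b_iB_i$ with $b_i\in\Ii$, we then have $D=\sum_i(b_i-b_i')B_i$ with $0\leq b_i-b_i'<\tau$, supported on the same finite set of prime divisors $\{B_1,\ldots,B_k\}$. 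Thus the whole problem takes place inside the finite-dimensional $\Rr$-vector space $V:=\bigoplus_{i=1}^k\Rr B_i$.

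Next, I would consider the natural map $\phi\colon V\to\mathrm{Cl}(X,x)\otimes\Rr$; its kernel $V_C$ is precisely the subspace of $\Rr$-Cartier divisors supported on $\{B_1,\ldots,B_k\}$. Since $\phi$ is induced by an integer-valued map to a finitely generated subgroup of $\mathrm{Cl}(X,x)$, the subspace $V_C$ is $\QQ$-rational. The set of $D''\in V$ with $K_X+D''$ $\Rr$-Cartier is an affine subspace $\Aa$ parallel to $V_C$, and the hypothesis $B'\in\Aa$ together with $D=B-B'$ shows that $B\in\Aa$ (equivalently, $K_X+B$ is $\Rr$-Cartier) if and only if $D\in V_C$. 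For a fixed support, the finite set $\Ii$ yields only finitely many candidates for $B$, each either in $\Aa$ (in which case we are done) or at strictly positive distance from $\Aa$; the minimum of these positive distances gives a support-dependent $\tau(\Supp)>0$ that works.

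The main obstacle will be making $\tau$ uniform over all possible choices of support, since infinitely many prime divisors pass through $x$ and $\phi(V)$ varies with $\Supp B$. For surface germs with rational singularities $\mathrm{Cl}(X,x)$ is finite, so $\phi$ vanishes after tensoring with $\Rr$, every $\Rr$-divisor is $\Rr$-Cartier, and the conjecture is trivial; the real difficulty is for germs whose local class group has positive rank. One strategy, at least in characteristic zero, is to pass to a small $\QQ$-factorialization $Y\to X$, where every Weil divisor becomes $\QQ$-Cartier, and then transfer the conjecture back to $X$ via push-forward, using the finiteness of $\Ii$ to control how the images of the various supports sit inside the finite-rank lattice $\mathrm{Cl}(X,x)/\text{torsion}$. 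Uniformity in $\tau$ should then follow from a discreteness argument that constrains the rational classes $\phi(B)$ can attain near $-[K_X]$, combined with the boundedness coming from the finite set of possible coefficients $b_i$. A parallel approach, better suited to positive characteristic, would be to argue directly with the finitely generated subgroup $\langle[B_i],[K_X]\rangle\subseteq\mathrm{Cl}(X,x)$, bounding the number of possible $\QQ$-linear relations among the $[B_i]$ that can be realized with small positive coefficients.
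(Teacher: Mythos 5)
You are attempting to prove a statement that the paper itself does not prove: this is the final, unnumbered conjecture of Section 7, introduced by the words ``It would also be interesting to ask the following,'' and the authors offer no proof, no partial result, and no reduction for it. So there is no argument in the paper to compare yours against; your proposal has to stand entirely on its own.

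On its own terms, the proposal has a genuine gap, and it is exactly the one you flag yourself. The opening reduction is correct and essentially content-free: since $K_X+B=(K_X+B')+(B-B')$, the claim is equivalent to $D:=B-B'$ being $\Rr$-Cartier, and for a \emph{fixed} finite support $\{B_1,\dots,B_k\}$ the set of $\Rr$-Cartier classes is a (rational) linear subspace $V_C$ of $\bigoplus_i\Rr B_i$, the locus $\Aa$ where $K_X+D''$ is $\Rr$-Cartier is an affine translate of $V_C$, and the finitely many candidates for $B$ with coefficients in the finite set $\Ii$ are each either in $\Aa$ or at positive distance from it, giving a support-dependent $\tau$. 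All of that is fine. But the conjecture asserts a $\tau$ depending only on $\Ii$ and $X\ni x$, and infinitely many prime divisors pass through $x$; as the support varies, so do $V$, $V_C$, $\Aa$, and the minimal positive distance from a candidate $B$ to $\Aa$, and nothing in your argument prevents the infimum of these distances over all supports from being $0$. That uniformity is the entire content of the statement, and the two strategies you sketch for it (passing to a small $\QQ$-factorialization, or a discreteness argument in the finitely generated subgroup $\langle [B_i],[K_X]\rangle\subseteq\mathrm{Cl}(X,x)$) are programmatic rather than proofs: in the relevant hard case, where $\mathrm{Cl}(X,x)$ has positive rank (e.g.\ a non-rational surface singularity such as a cone over an elliptic curve, where classes of rulings sweep out a divisible group), you would need to rule out sequences of supports whose classes approximate $-[K_X]$ modulo $V_C$ arbitrarily well with coefficients drawn from $\Ii$, and no such argument is given. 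Your observation that the conjecture is trivial for rational surface singularities (finite local class group) is correct but does not touch this case. In short: correct and routine reduction, correct fixed-support case, but the actual assertion remains unproved --- consistent with the fact that the authors state it as an open conjecture.
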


\appendix

\section{ACC for MLDs for surfaces according to Shokurov} \label{appendix A}

In this appendix, we give a simple proof of the ACC for MLDs for surfaces following Shokurov's idea \cite{Sho91}. As one of the key steps, we will show the ACC for pld's for surfaces (Theorem \ref{finite pld}) which we apply to prove Theorem \ref{thm: nak conj dcc not fix germ}. The proof of Theorem \ref{finite pld} only depends on Lemma \ref{Weight} in the previous sections. The other results in this appendix are not used elsewhere in this paper.

\begin{defn}
Let $\dg$ be a fixed dual graph. $\mathbf{P}_\dg$ denotes the set of all lc surface germs $(X\ni x, B)$ with the following properties:
\begin{itemize}
    \item there exist a smooth surface $Y$, and a projective birational morphism $f:Y\to X\ni x$ with the dual graph $\dg$, and
    \item $a(E,X,B)\leq 1$ for any $E\in \dg$.
\end{itemize}
\end{defn}

\begin{lem}\label{Fixed graph} 
Fix a DCC set $\Ii\subseteq [0,1]$ and a dual graph $\dg$ whose vertices $\{E_i\}_{1\leq i\leq n}$ are smooth rational curves. Then the set
$\{a(E,X,B)\mid (X\ni x,B)\in \mathbf{P}_{\dg}, B\in \Ii, E\in \dg\}$ satisfies the ACC.
\end{lem}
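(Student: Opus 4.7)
The plan is to use the fact that fixing $\dg$ fixes the intersection matrix of the vertices, solve the standard linear system coming from $f^*(K_X+B)\cdot E_j = 0$ for the discrepancies $a_i := a(E_i,X,B)$, and read off that each $1-a_i$ lies in a DCC set. The argument is in the same spirit as the matrix-inversion computation already used in the proof of Theorem~\ref{thm: nak conj dcc not fix germ}.

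First I would write down the system. Since each vertex $E_j$ is a smooth rational curve, adjunction gives $K_Y\cdot E_j = w_j - 2$, where $w_j := -E_j\cdot E_j$ is determined by $\dg$. Intersecting the identity
$$K_Y + f_*^{-1}B + \sum_i (1-a_i)E_i = f^*(K_X+B)$$
with each $E_j$ yields $M(\vec{a}-\vec{1})^T = \vec{c}^{\,T}$, where $M := (E_i\cdot E_j)$ is the fixed intersection matrix of $\dg$ and $c_j := w_j - 2 + f_*^{-1}B\cdot E_j \ge 0$. By \cite[Lemma~3.40, Lemma~3.41]{KM98}, $M$ is negative definite and $M^{-1}$ has strictly negative entries, so setting $\beta_{ij} := -(M^{-1})_{ij} > 0$ we obtain
$$1 - a_i \;=\; C_i \;+\; \sum_{j} \beta_{ij}\bigl(f_*^{-1}B\cdot E_j\bigr), \qquad C_i := \sum_j \beta_{ij}(w_j-2),$$
where $C_i$ and every $\beta_{ij}$ depend only on $\dg$.

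Next, I would bound how much room there is for the second summand. Since $(X,B)$ is lc we have $a_i \ge 0$, so $1-a_i \le 1$ and, by positivity of $\beta_{ij}$ and $f_*^{-1}B\cdot E_j$, each $f_*^{-1}B\cdot E_j$ is bounded above by a constant $T$ depending only on $\dg$. Expanding $f_*^{-1}B\cdot E_j = \sum_k b_k m_{jk}$ with $b_k \in \Ii$ and $m_{jk} \in \Zz_{\ge 0}$, Lemma~\ref{dcc set lower bound} produces a positive lower bound $\gamma_0$ for $\Ii\setminus\{0\}$, which forces $\sum_k m_{jk} \le T/\gamma_0$ for every $j$. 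Thus the combinatorial data $(m_{jk})$ with nonzero contribution ranges over a finite set, and only boundedly many $b_k$ are active.

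Therefore
$$1 - a_i \;=\; C_i \;+\; \sum_k \Bigl(\sum_j \beta_{ij} m_{jk}\Bigr) b_k$$
is the fixed rational $C_i$ plus a sum of at most boundedly many terms of the form $\alpha b$, where $\alpha$ ranges in a fixed finite set of positive rationals and $b \in \Ii$. Such sums lie in a DCC set since $\Ii$ does, hence $\{1-a_i\}$ is DCC and $\{a_i\}$ is ACC, which is what was claimed. The one step requiring attention is extracting the uniform upper bound on $f_*^{-1}B\cdot E_j$, and thereby on the multiplicities $m_{jk}$; but this follows cleanly from the strict positivity of the individual entries $\beta_{ij}$ given by \cite[Lemma~3.41]{KM98} together with the hypothesis $a_i\le 1$ built into the definition of $\mathbf{P}_\dg$.
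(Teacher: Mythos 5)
Your proposal is correct and follows essentially the same route as the paper: intersect $f^*(K_X+B)$ with each $E_j$, invert the fixed negative definite intersection matrix using \cite[Lemma 3.40, Lemma 3.41]{KM98}, and conclude that $a(E_i,X,B)$ is a fixed negative combination of the DCC quantities $w_j-2+f_*^{-1}B\cdot E_j$, hence lies in an ACC set. Your extra step of bounding $f_*^{-1}B\cdot E_j$ from above to reduce to finitely many multiplicity patterns is harmless but not needed, since a fixed negative multiple of a DCC set is ACC and a finite sum of ACC sets is ACC.
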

\begin{proof}
Let $\{E_i\}_{1\leq i\leq n}$ be the set of vertices of $\dg$, and $w_i:=-E_i\cdot E_i$ for $1\leq i\leq n$.

For any surface germ $(X\ni x, B)\in \mathbf{P}_{\dg}$, let $f: Y\to X\ni x$ be a projective birational morphism, such that the dual graph of $f$ is $\dg$. We may write  
\begin{align*}
     K_Y+f^{-1}_*B+\sum_i (1-a_i)E_i=f^*(K_X+B),
\end{align*}
 For each $1\leq j\leq n$, we have
$$(K_Y+f^{-1}_*B+\sum_i (1-a_i)E_i)\cdot E_j=0,$$
which implies $f^{-1}_{*}B \cdot E_j-2-{E_j}^2=\sum_{i=1}^n (a_i-1)E_i\cdot E_j,$ or
$$\begin{pmatrix}
  E_1\cdot E_1&\cdots &E_n\cdot E_1 \\
  \vdots&\ddots & \vdots         \\
  E_1\cdot E_n&\cdots &E_n\cdot E_n \\
\end{pmatrix} 
\begin{pmatrix}
  a_1-1 \\
  \vdots \\
  a_n-1 \\
\end{pmatrix} 
 =
 \begin{pmatrix}
w_1-2+f^{-1}_*B\cdot E_1\\
\vdots\\
w_n-2+f^{-1}_*B\cdot E_n\\
\end{pmatrix},$$
where $a_i=a(E_i,X\ni x,B)$ for $1\leq i\leq n$. Since $B\in \Ii$, $\{f^{-1}_*B\cdot E_j\mid (X\ni x,B)\in \mathbf{P}_{DG}, E_j\in \dg,B\in\Ii\}$ is a set which satisfies the DCC. Thus $\{f^{-1}_*B\cdot E_j+w_j-2\mid (X\ni x,B)\in \mathbf{P}_{\dg}, E_j\in \dg,B\in\Ii\}$ is also a set which satisfies the DCC. 

By \cite[Lemma 3.40]{KM98}, $(E_i\cdot E_j)_{1\le i,j\le n}$ is a negative definite matrix. Let $(s_{ij})_{n\times n}$ be the inverse matrix of $(E_i\cdot E_j)_{1\le i,j\le n}$. By \cite[Lemma 3.41]{KM98}, $s_{ij}<0$ for any $1\le i,j\le n$, and this shows that $a(E,X\ni x,B)$ belongs to the set $$\{\sum_{j=1}^n s_{ij}(f^{-1}_*B\cdot E_j+w_j-2)+1 \mid (X\ni x,B)\in \mathbf{P}_{\dg},E_j\in \dg,B\in \Ii\},$$ which satisfies the ACC as $\{f^{-1}_*B\cdot E_j+w_j-2\mid (X\ni x,B)\in \mathbf{P}_{\dg}, E_j\in \dg,B\in\Ii\}$ is a set which satisfies the DCC and $s_{ij}$ belongs to a finite set of negative numbers that only depends on the dual graph $\mathcal{DG}$.
\end{proof}

\begin{proof}[Proof of Theorem \ref{reduce to pld}]
Let $f: Y\to X\ni x$ be a birational morphism which satisfies Lemma \ref{Resolution}(1)--(4). By Lemma \ref{bound chain by edge $(-1)$-curve} and Lemma \ref{Bound -1}, the number of vertices of the dual graph $\dg$ of $f$ is bounded, and by Lemma \ref{Weight}(1), $\dg$ belongs to a finite set $\{\dg_i\}_{1\leq i\leq N}$ which only depends on $\epsilon_0$ and $\Ii$. By Lemma \ref{Fixed graph}, we are done.
\end{proof}

\medskip

We are going to prove Theorem \ref{finite pld}. Our proof avoids calculations to compute plds explicitly. 

We introduce some notations here. We say a surface germ $(X\ni x, B) $ is of type $\mathcal{GI}_m$ if the dual graph $\dg$ of the minimal resolution of $X\ni x$ is a chain with $m$ vertices, and we say $(X\ni x, B)$ is of type $\mathcal{GT}_{1,1,m}$ if the dual graph $\dg$ is a tree with only one fork $E_0$ and three branches of length $(1,1,m)$.

\begin{proof}[Proof of Theorem \ref{finite pld}]
It suffices to show that $\mathrm{Pld}(2,\Ii)\cap [\epsilon_0,1]$ satisfies the ACC for any given $\epsilon_0\in (0,1)$. Let $\gamma:=\min_{t\in\Ii,t>0}\{t,1\}>0$, and $\delta:=\min\{\frac{\epsilon_0}{3},\frac{\gamma}{2}\}>0$. 

Let $\{(X_i\ni x_i, B_i)\}_i$ be a sequence of lc surface germs, such that
\begin{itemize}
    \item $B_i\in \Ii$,
    \item $\pld(X_i\ni x_i, B_i)\in [\epsilon_0,1]$, and
    \item the sequence $\{\pld(X_i\ni x_i, B_i)\}_i$ is increasing.
\end{itemize}
It suffices to show that passing to a subsequence, $\{\pld(X_i\ni x_i, B_i)\}_i$ is a constant sequence. 

Let $f_i: Y_i\to X_i\ni x_i$ be the minimal resolutions, $\dg_i$ the dual graphs of $f_i$, $\{E_{k,i}\}_k$ the vertices of $\dg_i$, $w_{k,i}:=-(E_{k,i}\cdot E_{k,i})$, and $a_{k,i}:=a(E_k,X_i,B_i)$. We may write $K_{Y_i}+f_{i*}^{-1}B_i+\sum_k (1-a_{k,i})E_{k,i}=f_i^*(K_{X_i}+B_i)$. Thus for any $E_{j,i}\in\dg_i$,
\begin{align}\label{ramj}
    0=-2+f_{i*}^{-1}B_i\cdot E_{j,i}+w_{j,i} a_{j,i}+\sum_{k\neq j} (1-a_{k,i})E_{k,i}\cdot E_{j,i}.
\end{align}

By \cite[Theorem 4.7]{KM98} and \cite[Theorem 4.16]{KM98}, except for finitely many graphs, any dual graph $\dg_i$ is either of type $\mathcal{GI}_m$ or of type $\mathcal{GT}_{1,1,m}$ $(m\ge 3)$. Possibly passing to a subsequence of $\{(X_i\ni x_i, B_i)\}_i$, by Lemma \ref{Fixed graph}, we only need to consider the following cases. 

\medskip

\noindent {\bf Case 1:} The dual graphs $\dg_i$ are of type $\mathcal{GI}_{m_i}$, $m_i\ge 3$.

We may index each $\dg_i$ as $\{E_k\}_{-n_{1,i}\leq k\leq n_{2,i}}$, where $E_{k,i}$ is adjacent to $E_{k+1,i}$ for $-n_{1,i}\leq k\leq n_{2,i}-1$. Then for each $i$, possibly shifting $k$, there exist a vertex $E_{0,i}\in\dg$ with $a(E_{0,i},X,B)=\pld(X_i\ni x_i,B_i)$.

If $a(E_{k,i},X_i,B_i)=\pld(X_i\ni x_i,B_i)$ for any $E_{k,i}\in\dg_i$, then possibly shifting $k$, we may assume that $n_{1,i}=0$, and we may set $I_i:=\{E_{0,i}\}$.

Otherwise, we may assume that there exists a vertex $E_{0,i}\in\dg_i$, such that $a_{0,i}=\pld(X_i\ni x_i,B_i)$, and $E_{0,i}$ is adjacent to some vertex $E_{k_0,i}$, such that $a_{k_0,i}>a_{0,i}$. We show that there are two possibilities: either 
\begin{itemize}
    \item $E_{0,i}$ is adjacent to only one vertex $E_{1,i}$, thus $a_{1,i}>a_{0,i}$, or
    \item $E_{0,i}$ is adjacent to two vertices $E_{1,i}, E_{-1,i}$, such that $a_{-1,i}-a_{0,i}\geq \delta$.
\end{itemize}
It suffices to show $a_{-1,i}-a_{0,i}\geq \delta$ in the latter case. Let $j=0$ in (\ref{ramj}), we have $$0<(a_{-1,i}-a_{0,i})+(a_{1,i}-a_{0,i})=f_{i*}^{-1}B_i\cdot E_{0,i}+(w_{0,i}-2)a_{0,i}.$$
Thus either $(w_{0,i}-2)a_{0,i}\ge a_{0,i}\ge \epsilon_0\ge 2\delta$ or $w_0=2$ and $f_{i*}^{-1}B_i\cdot E_{0,i}\ge \gamma\ge 2\delta$. Possibly switching $\{E_{k,i}\}_{k<0}$ and $\{E_{k,i}\}_{k>0}$, we may assume that $a_{-1,i}-a_{0,i} \geq\delta$. Let $I_i$ be the vertex-induced subgraph of $\dg_i$ by $\{E_{k,i}\}_{k\le 0}$. By Lemma \ref{Weight}(1)--(2), $\{I_i\}_{i>0}$ is finite set. In particular, $\{n_{1,i}\}_i$ is a bounded sequence.

\begin{figure}[ht]
\begin{tikzpicture}
         \draw (1.5,0) circle (0.1);
         \node [above] at (1.5,0.1) {\footnotesize$w_{-n_{1,i}}$};
         \draw [dashed] (1.6,0)--(2.4,0);
         \node [below] at (2.4,-0.2) {\footnotesize$I_i$};
         \draw (2.4,0) ellipse (1.3 and 0.20);
         \draw (2.5,0) circle (0.1);
         \node [above] at (2.5,0.1) {\footnotesize$w_{-1,i}$};
         \draw (2.6,0)--(3.2,0);
         \draw (3.3,0) circle (0.1);
         \node [above] at (3.3,0.1) {\footnotesize$w_{0,i}$};
         \draw (3.4,0)--(4.0,0);
         \draw (4.1,0) circle (0.1);
         \node [above] at (4.1,0.1) {\footnotesize$w_{1,i}$};
         \draw [dashed](4.2,0)--(5.0,0);
         \draw (5.1,0) circle (0.1);
         \node [above] at (5.1,0.1) {\footnotesize$w_{n_{2,i}}$};

\end{tikzpicture}   
 \caption{Dual graph of $\dg_i$}
    \label{fig:IMT}
\end{figure}
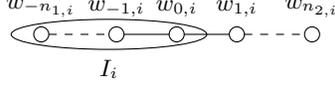

We may assume that $\{n_{2,i}\}_i$ is an unbounded sequence, otherwise the set $\{\dg_i\}_i$ is a finite set, and by Lemma \ref{Fixed graph}, we are done. Possibly passing to a subsequence of $\{(X_i\ni x_i,B_i)\}_i$, we may assume that: 
\begin{itemize}
\item $I:=I_i$ is a fixed graph with weights $w_{k}:=w_{k,i}$ for any $k\le 0$ and any $i$,
\item $\{s_i:=a_{1,i}-a_{0,i}\}_{i}$ is decreasing as $n_{2,i}s_i\leq 1$, and
\item for each $k\le 0$, the sequence $\{f_{i*}^{-1}B_i\cdot E_{k,i}\}_i$ is increasing.
\end{itemize}

For convenience, set $a_{-n_1-1,i}:=1$ for any $i$, where $n_1+1$ is the number of vertices of $I$. By (\ref{ramj}), for any $-n_1\leq k\leq 0$ and any $i$, we have
\begin{align}\label{induction equation}
a_{k-1,i}-a_{k,i}=f^{-1}_{i*}B_i\cdot E_{k,i}+(w_{k}-2)a_{k,i}+(a_{k,i}-a_{k+1,i}).
\end{align}
In particular, when $k=0$, we have
\begin{align}\label{at E_0}
    (a_{1,i}-a_{0,i})+(a_{-1,i}-a_{0,i})=f^{-1}_{i*}B_i\cdot E_{0,i}+(w_{0}-2)a_{0,i}.
\end{align}
Since $a_{0,i}=\pld(X_i\ni x_i, B_i)$ is increasing, the right-hand side of (\ref{at E_0}) is increasing respect to $i$. Note that $\{s_i=a_{1,i}-a_{0,i}\}_{i}$ is decreasing, $\{a_{-1,i}-a_{0,i}\}_{i}$ as well as $\{a_{-1,i}\}_{i}$ are increasing respectively. 


By induction on $k$ using (\ref{induction equation}), for any $-n_1\leq k\leq 0$, we conclude that the sequences $$\{a_{k-1,i}-a_{k,i}\}_i \text{ and } \{a_{k,i}\}_i$$ 
are increasing respectively. Since
$$1=a_{-n_1-1,i}=\sum_{k=-n_1}^{0} (a_{k-1,i}-a_{k,i})+a_{0,i},$$ each summand must be a constant sequence. In particular, $\{a_{0,i}=\pld(X_i\ni x_i,B_i)\}_i$ is a constant sequence, and we are done.

\medskip

\noindent {\bf Case 2:} The dual graphs $\dg_i$ are of type $\mathcal{GT}_{1,1,m_i}$, $m_i\ge 3$.

For each $i$, let $E_{0,i}$ be the fork in $\dg_i$. By (2) and (3) of Lemma \ref{Weight}, $a_{0,i}=\pld(X_i\ni x_i, B_i)$. Let $E_{1,i}, E_{-1,i},E_{-2,i}$ be the vertices adjacent to $E_{0,i}$, where $E_{1,i}$ belongs to the chain $\dg_i\backslash\{E_{1,i}\cup E_{2,i}\}$. We decompose the dual graph of $f_i$ into three parts $\mathcal{I}_i,\mathcal{M}_i,\mathcal{T}_i$, where $\mathcal{I}_i$, $\mathcal{M}_i$, $\mathcal{T}_i$ are the vertex-induced subgraphs of $\dg_i$ by $\{E_{k,i}\}_{-2\leq k\leq 0}$, $\{E_{k,i}\}_{1\leq k\leq n_{0,i}}$, $\{E_{k,i}\}_{n_{0,i}<k\leq m_i}$ respectively, such that $n_{0,i}\ge 0$, $a_{k+1,i}-a_{k,i}<\delta$ for $1\leq k\leq n_{0,i}$, and $a_{k+1,i}-a_{k,i}\geq \delta$ for $k> n_{0,i}$ (see Figure \ref{fig:IMT2}). We remark that if $n_{0,i}=0$, then $\mathcal{M}_i=\emptyset$.

By Lemma \ref{Weight}(4), $\mathcal{M}_i$ only consists of $(-2)$-curves whose intersections with $f_{i*}^{-1}B_i$ are all zero, thus by (\ref{ramj}), for each $i$, $s_i:=a_{k+1,i}-a_{k,i}$ is a constant for any $0\leq k\leq n_{0,i}$. By Lemma \ref{Weight}(1)--(2), $\{T_i\}_{i>0}$ is a finite set.

\begin{figure}[ht]
\begin{tikzpicture}
         \draw (6.5,0) circle (0.1);
         \node [above] at (6.5,0.1) {\footnotesize$w_{-1}$};
         \node [below] at (6.3,-0.1) {\footnotesize$\mathcal{I}_i$};
         \draw (6.4,0)--(5.8,0);
         \draw (5.7,0) circle (0.1);
         \node [above] at (5.7,0.1) {\footnotesize$w_0$};
         \draw (5.7,-0.1)--(5.7,-0.7);
         \draw (5.7,-0.8) circle (0.1);
         \node [below] at (5.7,-0.9) {\footnotesize$w_{-2}$};
         \draw (5.6,0)--(5,0);
         \draw (4.9,0) circle (0.1);
         \node [above] at (4.9,0.1) {\footnotesize$2$};
         \draw (4.8,0)--(4.2,0);
         \draw (4.1,0) circle (0.1);
         \node [above] at (4.1,0.1) {\footnotesize$2$};
         \draw [dashed](4,0)--(3.2,0);
         \node [below] at (3.9,-0.1) {\footnotesize$\mathcal{M}_i$};
         \draw (3.1,0) circle (0.1);
         \node [above] at (3.1,0.1) {\footnotesize$2$};
         \draw (3,0)--(2.4,0);
         \draw (2.3,0) circle (0.1);
         \node [above] at (2.3,0.1) {\footnotesize$w_{n_{0,i}+1}$};
         \draw [dashed] (2.2,0)--(1,0);
         \node [below] at (1.6,-0.1) {\footnotesize$\mathcal{T}_i$};
         \node [above] at (0.9,0.1) {\footnotesize$w_{m_i}$};
         \draw (0.9,0) circle (0.1);
         \draw (1.6,0) ellipse (1.2 and 0.18);
         \draw [->] (1.8,-0.2)--(2.2,-0.6);
         \node [below] at (2.7,-0.55) {\footnotesize$F_{k,i}=E_{n_{0,i}+k,i}$};

\end{tikzpicture}    
 \caption{}
    \label{fig:IMT2}
\end{figure}

By Lemma \ref{Fixed graph} again, we may assume that $\{n_{0,i}\}_i$ is an unbounded sequence. Possibly passing to a subsequence, we may assume that:
\begin{itemize}
\item $\{n_{0,i}\}_i$ is strictly increasing, $n_{0,i}\ge 1$ for any $i$, and $\{s_i=a_{1,i}-a_{0,i}\}_i$ is decreasing as $n_{0,i}s_i\leq 1$,
\item $T:=T_i$ is a fixed graph with weights $w_k:=w_{n_{0,i}+k,i}$ for any $0<k\le m_i-n_{0,i}$ and any $i$,
\item $I:=I_i$ is a fixed graph with weights $w_k:=w_{k,i}$ for any $-2\leq k\leq 0$ and any $i$, and
\item for each $0<k\le m_i-n_{0,i}$, the sequence $\{f_{i*}^{-1}B_i\cdot E_{n_{0,i}+k,i}\}_i$ is increasing. 
\end{itemize}

Let $j=0$ in (\ref{ramj}), we have
\begin{align}
\begin{split}\label{Equation1*}
    &(a_{-1,i}-\frac{1}{w_{-1}}a_{0,i})+(a_{-2,i}-\frac{1}{w_{-2}}a_{0,i})\\
    =&1+f^{-1}_{i*}B_i\cdot E_{0,i}+(w_0-1-\frac{1}{w_{-1}}-\frac{1}{w_{-2}})a_{0,i}-(a_{1,i}-a_{0,i}).
    \end{split}
\end{align}
Suppose that possibly passing to a subsequence, $\{s_i=a_{1,i}-a_{0,i}\}_{i}$ is strictly decreasing, then possibly passing to a subsequence, the right-hand side of (\ref{Equation1*}) is strictly increasing, so is $\{(a_{-1,i}-\frac{1}{w_{-1}}a_{0,i})+(a_{-2,i}-\frac{1}{w_{-2}}a_{0,i})\}_{i}$. Thus possibly passing to a subsequence and switching $E_{-1}$ with $E_{1}$, we may assume that $\{a_{-1,i}-\frac{1}{w_{-1}}a_{0,i}\}_{i}$ is strictly increasing. 
Let $j=-1$ in (\ref{ramj}), we have 
$$w_{-1}a_{-1,i}-a_{0,i}=1-f^{-1}_{i*}B_i\cdot E_{-1,i}.$$
It follows that $\{1-f^{-1}_{i*}B_i\cdot E_{-1,i}\}_{i}$ is strictly increasing, a contraction. Hence possibly passing to a subsequence, we may assume that $\{s_i\}_i$ is a constant sequence, and there exists a non-negative real number $s$, such that $s=s_i$ for all $i$. Since $0\le n_{0,i}s\leq 1$ for all $i$ and $\{n_{0,i}\}_i$ is strictly increasing, we have $s=0$. In particular, $a_{n_{0,i},i}=a_{n_{0,i}+1,i}=\pld(X_i\ni x_i, B_i)$ is increasing.

\medskip

The rest part of the proof of \textbf{Case 2} is very similar to that of \textbf{Case 1}. For the reader's convenience, we give the proof in details. 

Let $m':=m_i-n_{0,i}$ be the number of vertices of $T$. We denote $F_{k,i}:=E_{n_{0,i}+k,i}$ (see Figure \ref{fig:IMT2}), $a_{k,i}':=a_{n_{0,i}+k,i}, w_{k}':=w_{k}$, and set $a'_{m'+1,i}=a_{m_i+1,i}=1$. By (\ref{ramj}), for any $1\leq k\leq m'$ and any $i$, we have
\begin{align}\label{induction equation 2}
a_{k+1,i}'-a_{k,i}'=f^{-1}_{i*}B_i\cdot F_{k,i}+(w_{k}'-2)a_{k,i}'+(a_{k,i}'-a_{k-1,i}').
\end{align}
In particular, when $k=1$, we have
\begin{align}\label{at E_{n_{0,i}+1}}
    a_{2,i}'-a_{1,i}'=f^{-1}_{i*}B_i\cdot F_{1,i}+(w_{1}'-2)a_{1,i}'.
\end{align}
It follows that $\{a_{2,i}'-a_{1,i}'\}$ is increasing. By induction on $k$ using (\ref{induction equation 2}), for any $1\leq k\leq m'$, we conclude that both sequences $\{a_{k+1,i}'-a_{k,i}'\}_i$ and $\{a_{k,i}'\}_i$ are increasing respectively. Since 
$$1=a_{m'+1,i}'=\sum_{k=1}^{m'} (a_{k+1,i}'-a_{k,i}')+a_{1,i}',$$ 
each summand must be a constant sequence. In particular, $\{a_{1,i}'=\pld(X_i\ni x_i, B_i)\}_i$ is a constant sequence, and we are done. 
\end{proof}

\begin{proof}[Proof of Theorem \ref{thm accformld in dim 2}]
By Lemma~\ref{Terminal}, it suffices to prove that $\Mld(2,\Ii)\cap [\epsilon_0,1]$ satisfies the ACC for any $\epsilon_0\in (0,1)$. This follows from Theorem \ref{reduce to pld} and Theorem \ref{finite pld}.
\end{proof}


\begin{thebibliography}{99}
\bibitem{Ale93} V.~Alexeev, \textit{Two two--dimensional terminations}, Duke Math. J. \textbf{69} (1993), no. 3, 527--545.

\bibitem{Bir16} C.~Birkar, \textit{Singularities of linear systems and boundedness of {F}ano varieties}, Ann. of Math. (2), \textbf{193}(2):347--405, 2021.
	
	
\bibitem{BirkarSho10} C.~Birkar and V.V. Shokurov, \textit{Mld's vs thresholds and flips}, J. Reine Angew. Mat, 638:209--234, 2010.

\bibitem{Che20} B. Chen, \textit{Upper bound of discrepancies of divisors computing minimal log discrepancies on surfaces}, arXiv:2009.03613v1.
		
\bibitem{CH20} G. Chen and J. Han,
\textit{Boundedness of $(\epsilon, n)$-complements for surfaces}, arXiv:2002.02246v2. Short version published on Adv. Math. \textbf{383} (2021), 107703, 40pp.
	

	
	
\bibitem{CGN20} W. Chen, Y. Gongyo, and Y. Nakamura, \textit{On generalized minimal log discrepancy}, arXiv:2112.09501v1.
	
\bibitem{HLL22} J.~Han, J.~Liu, and Y.~Luo, \textit{ACC for minimal log discrepancies of terminal threefolds}, arXiv:2202.05287v2.
	
\bibitem{HLS19} J.~Han, J.~Liu, and V.V.~Shokurov, \textit{ACC for minimal log discrepancies of exceptional singularities}, arXiv:1903.04338v2.


\bibitem{HL22a} J.~Han and Y.~Luo,
\textit{On boundedness of divisors computing minimal log discrepancies for surfaces}, to appear in J. Inst. Math. Jussieu.

\bibitem{HL22b} J.~Han and Y.~Luo,
\textit{A simple proof of ACC for minimal log discrepancies for surfaces}, to appear in Acta Math. Sin. (Engl. Ser.).


\bibitem{HX15} C. Hacon and C. Xu, \textit{On the three dimensional minimal model program in positive characteristic}, J. Amer. Math. Soc. 28 (2015), 711-744.

\bibitem{Is21} S. Ishii, \textit{The minimal log discrepancies on a smooth surface in positive characteristic}, Math. Z. (2021) 297:389--397.


\bibitem{Kawakita14} M.~Kawakita, \textit{Discreteness of log discrepancies over log canonical triples on a fixed pair}, J. Algebraic Geom. \textbf{23} (2014), no. 4, 765--774.

\bibitem{Kawakita18} M.~Kawakita, \textit{On equivalent conjectures for minimal log discrepancies on smooth threefolds}, J. Algebraic Geom. \textbf{30} (2021), 97--149.
  
  
\bibitem{Kol92} J.~Koll\'{a}r \'{e}t al., \textit{Flip and abundance for algebraic threefolds}, Ast\'{e}risque no. \textbf{211}, (1992).

\bibitem{KM98} J.~Koll\'{a}r and S.~Mori, \textit{Birational geometry of algebraic varieties}, Cambridge Tracts in Math. \textbf{134}, Cambridge Univ. Press, 1998.

\bibitem{Lip78} J.~Lipman, \textit{Desingularization of two-dimensional schemes}, Ann. Math. 107
(1978), no. 1, p. 151-207.
  
\bibitem{mustata-nakamura18} M.~Musta\c{t}\v{a} and Y.~Nakamura, \textit{A boundedness conjecture for minimal log discrepancies on a fixed germ}, Local and global methods in algebraic geometry, Contemp. Math. \textbf{712} (2018), 287--306.
  

\bibitem{Sho88} V.V.~Shokurov, {\it Problems about {F}ano varieties}, {Birational Geometry of Algebraic Varieties, Open Problems. The XXIIIrd International Symposium, Division of Mathematics, The Taniguchi Foundation}, 30--32, August 22--August 27, 1988.

\bibitem{Sho91} V.V. Shokurov, \textit{A.c.c. in codimension 2}, 1994 (preprint).

\bibitem{Sho92} V.V.~Shokurov, \textit{Threefold log flips}, With an appendix in English by Y. Kawamata, Izv.
Ross. Akad. Nauk Ser. Mat. \textbf{56} (1992), no. 1, 105--203.

\bibitem{Sho04} V.V.~Shokurov, \textit{Letters of a bi-rationalist, V. Minimal log discrepancies and termination of log flips}, (Russian) Tr. Mat. Inst. Steklova 246, Algebr. Geom. Metody, Svyazi i Prilozh., 328--351, 2004.	

\bibitem{Tan14} H.~Tanaka, \textit{Minimal models and abundance for positive characteristic log surfaces}, Nagoya Math. J. 216 (2014), 1-70.

\bibitem{Tan15} H.~Tanaka, \textit{The X-method for klt surfaces in positive characteristic}, J. Algebraic Geom. \textbf{24} (2015), no. 4, 605-628.

\bibitem{Tan18} H.~Tanaka, \textit{Minimal model program for excellent surfaces}, Ann. Inst. Fourier (Grenoble) 68 (2018), no. 1, 345-376.

\end{thebibliography}
\end{document}